\documentclass[10pt]{article}   

\usepackage{graphicx}

\usepackage{latexsym,amsfonts,amsmath,theorem,amssymb}
\usepackage{stmaryrd,array,tabularx,bbm}
\usepackage{pstricks,graphicx}

\usepackage{a4wide}

\usepackage{color}
\usepackage{caption}
\usepackage{theorem}
\usepackage{cleveref}
\usepackage{framed}
\usepackage{blindtext}
\usepackage{float}
\usepackage{todonotes}
\usepackage{graphicx,caption}

\usepackage[caption=false]{subfig}
\usepackage{algorithm}
\usepackage{algorithmicx}
\usepackage[noend]{algpseudocode}
\usepackage{tikz}
\usepackage{caption}

\algrenewcommand\algorithmicrequire{\textbf{Input:}}
\algrenewcommand\algorithmicensure{\textbf{Output:}}
\newtheorem{theorem}{Theorem}[section]
\newtheorem{lemma}[theorem]{Lemma}
\newtheorem{proposition}[theorem]{Proposition}
\newtheorem{assumption}[theorem]{Assumption}
\newtheorem{definition}[theorem]{Definition}

\newenvironment{proof}{\begin{trivlist}
    \item[\hskip\labelsep{\bf Proof.}]}{$\hfill\Box$\end{trivlist}}

{\theoremstyle{plain} \theorembodyfont{\rmfamily}

\newtheorem{remark}[theorem]{Remark}}

\numberwithin{equation}{section}
\numberwithin{figure}{section}
\numberwithin{table}{section}

\setlength{\bigskipamount}{5ex plus1.5ex minus 2ex}
\setlength{\textheight}{23cm} \setlength{\textwidth}{15cm}
\setlength{\hoffset}{-0.5cm} \setlength{\voffset}{-1.8cm}
\setlength{\extrarowheight}{0.25\baselineskip}

\newcommand{\bsi}{{\boldsymbol{i}}}

\newcommand{\Nsub}{{N_{\rm sub}}}
\newcommand{\Nens}{{N_{\rm ens}}}

\newcommand{\Nobs}{{N_{\rm obs}}}
\newcommand{\comment}[1]{}

\newcommand{\argmin}{{\mathrm{argmin}}}
\newcommand{\spann}{{\mathrm{span}}}
\newcommand{\hPsi}{{\overline{\mathbf{F}}}}

\renewcommand{\hat}{\widehat}
\setlength{\extrarowheight}{0.25\baselineskip}

\newcommand{\vertiii}[1]{{\left\vert\kern-0.25ex\left\vert\kern-0.25ex\left\vert #1 
\right\vert\kern-0.25ex\right\vert\kern-0.25ex\right\vert}}

\graphicspath{{./figures/}}

\setlength{\marginparwidth}{1in}

\title{Subsampling in Ensemble Kalman Inversion}
\author{Matei Hanu\footnote{Fachbereich Mathematik und Informatik, Freie Universit\"at Berlin, Arnimallee 6, 14195 Berlin, Germany, $\{$matei.hanu, c.schillings$\}$@fu-berlin.de} \and Jonas Latz\footnote{Department of Mathematics, University of Manchester, United Kingdom, jonas.latz@manchester.ac.uk} \and Claudia Schillings\footnotemark[1]}
\date{}

\begin{document}
\maketitle
\abstract{We consider the Ensemble Kalman Inversion which has been recently introduced as an efficient, gradient-free optimisation method to estimate unknown parameters in an inverse setting. In the case of large data sets, the Ensemble Kalman Inversion becomes computationally infeasible as the data misfit needs to be evaluated for each particle in each iteration.
Here, randomised algorithms like stochastic gradient descent have been demonstrated to successfully overcome this issue by using only a random subset of the data in each iteration, so-called subsampling techniques.
Based on a recent analysis of a continuous-time representation of stochastic gradient methods, we propose, analyse, and apply subsampling-techniques within Ensemble Kalman Inversion. Indeed, we propose two different subsampling techniques: either every particle observes the same data subset (single subsampling) or every particle observes a different data subset (batch subsampling).}
\section{Introduction}
A large variety of physical, biological and social systems and processes have been described by mathematical models. Those models can be used to analyse and predict the behaviour of the associated processes and systems. In case, a model shall be employed to describe a particular system, the model needs to be calibrated with respect to observation of that particular system. This calibration process that fits the model to data is often called \emph{inversion}.
Inversion forms the basis for, e.g., numerical weather prediction, medical image processing, and many machine learning methods. Several inversion techniques have been proposed and studied: we often distinguish variational/optimisation-based approaches and Bayesian/statistical approaches. Throughout this work, we consider a class of methods that can be seen as being in between those two approaches: the \emph{Ensemble Kalman Inversion} (EKI) framework going back to \cite{Iglesias_2013,Schillings2016}. EKI is based on an Ensemble Kalman-Bucy Filter that is iteratively applied to solve an inverse problem. In the linear setting the resulting algorithm is usually given in the form of a preconditioned gradient flow, that is an ordinary differential equation describing the dynamics of an ensemble of particles.

EKI becomes computationally infeasible, if the considered amount of data is too large: the data cannot be stored in the memory at once and, thus, EKI is not applicable.
The same problem arises also in other traditional optimisation algorithms, like gradient descent or the Gauss--Newton method. In the past decades, randomised algorithms that optimise in each time step only with respect to a subsample of the data set have become popular. A subsample is a (often randomly chosen) subset of the considered data set. The foundation for all of these stochastic optimisation algorithms is the \emph{stochastic gradient descent} (SGD) algorithm going back to \cite{Robbins1951}. Stochastic gradient descent and its variants have become especially popular in the machine learning community. 

The idea of randomised subsampling in the EKI framework has been proposed in \cite{Kovachki2019}, where it is, indeed, applied to train a neural network. There, the subsampling has been introduced after discretising the preconditioned gradient flow, but no analysis has been presented.
A recent work by \cite{Latz2021} explains how subsampling can be represented in continuous-time settings and how these can be analysed. In the present work, we aim at using this theory in the context of EKI to analyse subsampling methodology at the ODE level. 
\subsection{Literature Overview}
Since its introduction in \cite{Evensen2003} the Ensemble Kalman Filter (EnKF) has been widely used in both inverse problems as well as data assimilation problems. The EnKF is very appealing for many applications due to its straightforward implementation and robustness w.r. to small ensemble sizes \cite{Bergemann2009,Bergemann2010,Iglesias2014,Iglesias2016,Iglesias_2013,Li2009}. Stability results can be found in \cite{Tong2015,Tong2016}. Convergence analysis based on the continuous time limit of the Ensemble Kalman Inversion(EKI) has been developed in \cite{Bloemker2019,Bloemker2021,Bungert2021,Schillings2017,Schillings2016}. However, to obtain convergence results in the parameter space some form of regularisation is usually needed. We mainly consider Tikhonov regularisation which was analysed for the EKI in \cite{Tong2020} for example. Recently there has been further analysis on Tikhonov regularisation for the stochastic EKI as well as adaptive Tikhonov strategies to improve the original variant \cite{Weissmann2022}. Considering large ensemble sizes, an analysis of the mean-field limit is presented in \cite{Stuart2022,Ding2020}.\\
A historical overview of the Kalman filter and some of its extensions can be found \cite{Stuart2022}.\\\
After their introduction by Robbins and Monro \cite{Robbins1951}, the stochastic gradient descent method has in the recent past been further analysed by, e.g., \cite{Bottou2016}. Stochastic gradient descent is often computationally advantageous compared to normal gradient descent \cite{Nocedal2006} due to computational efficiency as well as being able to escape local minimisers in non-convex optimisation problems \cite{Choromanska2014,Vidal2017}. As mentioned earlier, the theory employed in this work is based on the continuous-time analysis of stochastic gradient descent  by Latz  \cite{Latz2021} that was further generalised in \cite{Jin2021}, but is somewhat orthogonal to the diffusion-based continuous-time analysis of SGD of, e.g.,  \cite{LiTaiE}. 
\subsection{Contributions and outline}
In the following, we focus on the case, where the data misfit is computationally infeasible due to large data. Inspired by the success story of randomised gradient descent methods, we introduce subsampling strategies to EKI to ensure feasibility of the method also in the large data regime. We summarise our contribution below:
\begin{enumerate}
    \item We introduce two subsampling techniques for EKI: single subsampling and batch subsampling.
    \item We present an analysis of the subsampling techniques for linear forward operators, in particular we analyse stability of the subsampling schemes and give conditions under which we obtain asymptotic stability. Indeed, the resulting dynamical system approximates the EKI solution.
    \item We illustrate our results with two examples: estimation of the source term for an elliptic partial differential equation and estimation of the diffusion coefficient for a parabolic partial differential equation.
\end{enumerate}
Whilst analysing the subsampling EKI, we generalise some results from \cite{Latz2021} to more general flows. These generalisations may be of independent interest.

This work is structured as follows. We introduce problem setting and EKI methods in Section~\ref{sec:background}. We discuss stochastic approximations of certain flows in general and the subsampling in EKI in particular in Section~\ref{sec_subsampling_flows}; before analysing them in Section~\ref{sec_subsEKI}. We show numerical examples in Section~\ref{sec_NumExp} and conclude the work in Section~\ref{sec_conclu}.

\section{Problem setting and mathematical background} \label{sec:background}
Let $(\Omega, \mathcal{A}, \mathbb{P})$ be a probability space, $X$ be a separable Hilbert space and $Y := \mathbb{R}^\Nobs$, with $\Nobs \in \mathbb{N} = \{1,2,\ldots\}$. We will refer to $X$ as \emph{parameter space} and to $Y$ as \emph{data space}. Let now $n, m \in \mathbb{N}$. We sometimes associate finite-dimensional spaces $\mathbb{R}^n$ with the basic inner product $\langle \cdot, \cdot \rangle$ and the associated Euclidean norm $\| \cdot \|$ or the weighted inner product $\langle \cdot, \cdot \rangle_{\Gamma} := \langle \cdot, \Gamma^{-1} \cdot \rangle $ and its associated weighted norm $\| \cdot \|_\Gamma$, where $\Gamma \in \mathbb{R}^{n \times n}$ is symmetric positive definite. Further we denote by $\mathcal{B}X:=\mathcal{B}(X,\|\cdot\|)$ (or respectively $\mathcal{B}X:=\mathcal{B}(X,\|\cdot\|_\Gamma)$), the Borel-$\sigma$-algebra on X. Given an additional space $\mathbb{R}^m$ we define the tensor product of vectors in $x \in \mathbb{R}^n$ and $y \in \mathbb{R}^m$ by $x \otimes y := x y^\top$.

In the following, we focus on linear inverse problems of the form
\begin{align} \label{eq_IP}
    A\theta^\dagger + \eta^\dagger = y^\dagger,
\end{align}
where $\theta^\dagger \in X$ is the \emph{true parameter}, $y^\dagger \in Y$ is the \emph{observed data set}, $\eta^\dagger \in Y$ is \emph{observational noise}, and $A: X \rightarrow Y$ is a compact operator. In a Bayesian setting, we model $\theta^\dagger, \eta^\dagger$ as random variables $\theta: \Omega \rightarrow X$ and $\eta: \Omega \rightarrow Y$, where $\theta \perp \eta$. Assuming that the noise is normally distributed, i.e. $\eta \sim \mathrm{N}(0, \Gamma)$ and non-degenerate, the posterior $\mu^y$ is characterised through Bayes' formula:
\begin{equation}
    \mathrm d \mu^y(\theta)=\frac{1}{Z}\exp\left(-\frac 12 \|y-A\theta\|^2_{\Gamma}\right)\mathrm d \mu_0(\theta)\,,\notag
\end{equation}
where $\mu_0$ denotes the prior distribution on the unknown parameters and $Z=\mathbb E_{\mu_0} \exp(-\frac 12 \|y-A\theta\|_{\Gamma}) $ is the normalization constant. We will focus in the following on the computation of a point estimate of the unknown parameters, the maximum aposteriori (MAP)  estimate, which is a minimiser of a regularised version of the potential
\begin{equation}\label{eq_pot_full}
    \Phi(\theta)=\Phi(\theta;y)=\frac 12 \|y-A\theta\|^2_{\Gamma}\,.
\end{equation}
In order to handle large data settings, i.e. $\Nobs$ large, we introduce a subsampling strategy, i.e., we partition the data $y^\dagger$ into multiple subsets into $\Nsub$ subsets $y_1^\dagger,\ldots,y_\Nsub^\dagger$, such that $(y_1^\dagger,\ldots,y_\Nsub^\dagger) = y^\dagger$,  $\Nsub \in \mathbb{N}, \Nsub \geq 2$, and $I := \{1,\ldots,\Nsub\}$. To this end, we define \emph{data subspaces} $Y_1,\ldots, Y_\Nsub$, such that $Y := \prod_{i \in I} Y_i$. Moreover, we assume that the noise $\eta$ has independent entries with respect to this splitting of the data space $Y$. In particular, we assume that there are covariance matrices $\Gamma_i : Y_i \rightarrow Y_i$, $i \in I$, such that $\Gamma$ has the following block diagonal structure:
$$
\Gamma = \begin{pmatrix} \Gamma_1 & & & \\ & \Gamma_2 & & \\ & & \ddots & \\ & & & \Gamma_\Nsub \end{pmatrix}.
$$
Finally, we split the operator $A$ into a family of $(A_i)_{i \in I}$, where
$$
A = \begin{pmatrix} A_1 \\ \vdots \\ A_{\Nsub}\end{pmatrix}.
$$
Then, we can equivalently represent the inverse problem \eqref{eq_IP}
by the family of inverse problems
\begin{align*}
    A_1\theta^\dagger + \eta^\dagger_1 &= y^\dagger_1 \notag\\
    \vdots& \\
     A_\Nsub\theta^\dagger + \eta^\dagger_\Nsub &= y^\dagger_\Nsub \notag,
\end{align*}
where $\eta^\dagger_i$ is a realisation of $\eta_i \sim \mathrm{N}(0, \Gamma_i)$ for $i \in I$. 
\begin{remark}
Note that the diagonal structure of the noise can always be guaranteed by multiplying \eqref{eq_IP} with the inverse square root of $\Gamma$. To simplify notation we will, w.l.o.g., assume for the remaining discussion that $\Gamma=\mathrm{Id}_k$ and correspondingly $\Gamma_i=\mathrm{Id}_{k_i}$.
\end{remark}
We will then consider the potentials of the respective data subset 
\begin{align*}
 \Phi_i(\theta) &:= \frac{1}{2} \|A_i \theta - y^\dagger_i \|^2 \qquad (i \in I).
\end{align*}

To overcome the ill-posedness of the inverse problem, we often consider a regularised version of the potential in the form of
\begin{equation}
    \Phi^{\mbox{\rm reg}}(\theta):=\Phi^{\mbox{\rm reg}}(\theta;y):=\frac 12 \|y-A\theta\|^2+\frac{\alpha}{2} \|\theta\|^2_{C_0}\,,\notag
\end{equation}
where $C_0$ is a self-adjoint, trace-class operator and $\alpha>0$. This corresponds to the MAP estimate in case of a Gaussian prior with covariance $\alpha C_0$, cp. \cite{Law2018, Tong2020}.  Assuming a Gaussian prior distribution with mean equal to zero, we can incorporate the regularisation via 
\begin{equation}
\tilde A=\begin{pmatrix}A \\ \left(\alpha C_0\right)^{\frac12}\end{pmatrix},\quad  \tilde y=\begin{pmatrix}y^\dagger \\ 0\end{pmatrix}\notag
\end{equation}
allowing us to write
\begin{equation}\label{eq:minreg}
\Phi^{\mbox{\rm reg}}(\theta)=\frac 12 \|\tilde y-\tilde A\theta\|^2\,.
\end{equation}
Note that the forward operator $A$ is usually not injective, whereas the regularisation results in an injective operator $\tilde A$.\\
Similarly, when we split the forward operator we consider
\begin{equation}
\tilde A_i=\begin{pmatrix}A_i \\ \left(\frac{\alpha}{\Nsub} C_0\right)^{\frac12}\end{pmatrix},\quad  \tilde y_i=\begin{pmatrix}y_i^\dagger \\ 0\end{pmatrix}\notag
\end{equation}
leading to the family of potentials
\begin{equation}
\Phi_i^{\mbox{\rm reg}}(\theta)=\frac 12 \|\tilde y_i-\tilde A_i\theta\|^2,\quad (i \in I).\notag
\end{equation}
that satisfies
\begin{equation}
\Phi^{\mbox{\rm reg}}(\theta)=\sum_{i=1}^\Nsub\Phi_i^{\mbox{\rm reg}}(\theta).\notag
\end{equation}

\subsection{Ensemble Kalman inversion and its variants}
We aim to solve the inverse problem \eqref{eq_IP} using the Ensemble Kalman inversion (EKI) framework. We will focus in the following on the continuous-time limit of the Kalman inversion, cp. \cite{Schillings2017}.

We define the initial ensemble to be $\theta_0 = (\theta_0^{(j)})_{j \in J} \in X^{\Nens}$ assuming w.l.o.g. that $(\theta_0^{(j)}-\bar{\theta}_0)_{j \in J}$ is a linearly independent family, with $\Nens \in \mathbb{N}, \Nens \geq 2$, and $J := \{1,\ldots,\Nens\}$.
The \emph{basic Ensemble Kalman inversion} proceeds by moving the particles in the parameter space according to the following dynamical system
\begin{align} \label{EKI_basic}
    \frac{\mathrm{d} \theta^{(j)}(t)}{\mathrm{d}t} &= - \widehat{C^{\theta y}}_t (A\theta^{(j)}(t)-y^\dagger) \qquad (j \in J)\\ \theta(0) &= \theta_0, \notag
\end{align}
where $$\widehat{C^{\theta y}}_t := \frac{1}{\Nens-1} \sum_{j = 1}^\Nens (\theta^{(j)}(t) - \overline{\theta}(t)) \otimes (A\theta^{(j)}(t) - A\overline{\theta}(t)), \qquad \overline{\theta}(t) = \frac{1}{\Nens}\sum_{j=1}^{\Nens}\theta^{(j)} \qquad (t \geq 0).$$
The linearity of the forward model leads to the following equivalent reformulation of the dynamical system
\begin{align} \label{EKI_basic_lin}
    \frac{\mathrm{d} \theta^{(j)}(t)}{\mathrm{d}t} &= - \widehat{C}_t D_\theta \Phi(\theta^{(j)}(t)) \qquad (j \in J)\\ \theta(0) &= \theta_0, \notag
\end{align}
where $$\widehat{C}_t := \frac{1}{\Nens-1} \sum_{j = 1}^\Nens (\theta^{(j)}(t) - \overline{\theta}(t)) \otimes (\theta^{(j)}(t) - \overline{\theta}(t)).$$
Intuitively, the dynamic represents parallel gradient flows minimizing \eqref{eq_pot_full} which are coupled through the empirical covariance $\widehat{C}_t$. This empirical covariance can be viewed as a preconditioner.
The following reformulation of the right hand side 
\begin{align*} 
    \frac{\mathrm{d} \theta^{(j)}(t)}{\mathrm{d}t} &= - \sum_{k=1}^\Nens \langle A\theta^{(k)}(t)-A\overline{\theta(t)},A\theta^{(j)}(t)-y^\dagger\rangle (\theta^{(k)}(t)-\overline{\theta(t)})\qquad (j \in J)\\ \theta(0) &= \theta_0
\end{align*}
reveals the so-called \emph{subspace property}, i.e. the particles ${(\theta_t^{(j)})_{j \in J}}$ lie in the span of the initial ensemble ${(\theta_0^{(j)})_{j \in J}}$ for any time $t\geq 0$, cp. \cite{Iglesias_2013}. Hence, we can assume that the parameter space is finite dimensional with ${X:=\mathbb{R}^{\Nens}}$ and will, w.l.o.g., do so in the following.

We define the \emph{Tikhonov-regularised Ensemble Kalman inversion} (TEKI) by the solution of the following ODE
\begin{align}\label{eki_regul_lin}
    \frac{\mathrm{d} \theta^{(j)}(t)}{\mathrm{d}t} &= - \widehat{C}_t D_\theta \Phi^{\mbox{\rm reg}}(\theta^{(j)}(t)) \qquad (j \in J) \\ \theta(0) &= \theta_0 \notag.
\end{align}

The ensemble of particles converges to the empirical mean following the dynamics given by \eqref{eki_regul_lin}. This results in the so-called ensemble collapse, and thus the degeneracy of the ensemble covariance operator, which results in an algebraic convergence rate rather than an exponential rate (compared to gradient flows).
\emph{Variance inflation} is a technique mitigating this effect by adding an operator to the degenerate $\widehat{C}_t$. Let $C_{\rm vi}: X \rightarrow X$ be a covariance operator on $X$. We define the \emph{variance-inflated Ensemble Kalman inversion} as the solution of the ODE
\begin{align} \label{EKI_VI}
    \frac{\mathrm{d} \theta^{(j)}(t)}{\mathrm{d}t} &= - (\widehat{C}_t + \alpha_{\rm vi} C_{\rm vi}) D_\theta \Phi(\theta^{(j)}(t)) \qquad (j \in J) \\ \theta(0) &= \theta_0, \notag
\end{align}
for $\alpha_{\rm vi}>0$. 

By replacing $\Phi$ by $\Phi^{\mbox{\rm reg}}$ in \eqref{EKI_VI}, one obtains the \emph{variance-inflated Tikhonov-regularised Ensemble Kalman inversion}
\begin{align*} 
    \frac{\mathrm{d} \theta^{(j)}(t)}{\mathrm{d}t} &= - (\widehat{C}_t + \alpha_{\rm vi} C_{\rm vi}) D_\theta \Phi^{\mbox{\rm reg}}(\theta^{(j)}(t)) \qquad (j \in J) \\ \theta(0) &= \theta_0, \notag
\end{align*}
for $\alpha_{\rm vi}>0$.

\subsubsection{Well-posedness and convergence analysis}

We summarise in this section the main results on the well-posedness and convergence properties of EKI with regularisation and variance inflation.\\ 
\begin{theorem}[\protect{\cite[Theorem~3.1]{Schillings2017}},\protect{\cite[Theorem~3.2]{Tong2020}}]
Let $\theta^1(0),...,\theta^J(0)$ be a given
initial ensemble, we denote by $S=\spann\{u^{(j)},j\in\{1,...,J\}\}$ the span of the initial ensemble. Then the ODE systems \eqref{EKI_basic_lin}, \eqref{eki_regul_lin} and \eqref{EKI_VI} have unique global solutions $u^{(j)}(t)\in C^1([0,\infty);S)$ for all $j\in\{1,...,J\}$.
\end{theorem}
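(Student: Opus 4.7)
The plan is to apply the Picard--Lindelöf theorem after first reducing each system to a finite-dimensional problem on the invariant subspace $S$, and then to rule out finite-time blow-up via a Lyapunov-type estimate.

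\textbf{Local existence and subspace invariance.} Concatenate the particles into $\Theta = (\theta^{(1)},\dots,\theta^{(\Nens)})$ and write each system as $\dot\Theta = F(\Theta)$. Since $A$ is linear, both $D_\theta \Phi(\theta) = A^\top(A\theta - y^\dagger)$ and $D_\theta \Phi^{\mathrm{reg}}(\theta) = \tilde A^\top(\tilde A\theta - \tilde y)$ are affine in $\theta$, and $\widehat{C}_t$ is quadratic in $\Theta$, so $F$ is a polynomial of total degree at most three, in particular $C^\infty$ and locally Lipschitz on bounded sets. The inner-product reformulation just after \eqref{EKI_basic_lin} shows that each $\dot\theta^{(j)}$ is a linear combination of vectors $\theta^{(k)}-\overline{\theta} \in S$; the identical argument handles \eqref{eki_regul_lin} and \eqref{EKI_VI} (for the variance-inflated versions we tacitly assume $\mathrm{Range}(C_{\mathrm{vi}}) \subseteq S$, as is standard when $C_{\mathrm{vi}}$ is built from ensemble data). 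Hence $S^{\Nens}$ is forward invariant, and Picard--Lindelöf yields a unique maximal classical solution $\Theta \in C^1([0,T^*); S^{\Nens})$ with $T^*\in (0,\infty]$.

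\textbf{Global existence via Lyapunov control.} To conclude $T^* = \infty$, argue by contradiction: if $T^*<\infty$ then $\|\Theta(t)\|\to \infty$ as $t\uparrow T^*$. For \eqref{eki_regul_lin} and its variance-inflated variant, set $V(\Theta) := \sum_{j\in J} \Phi^{\mathrm{reg}}(\theta^{(j)})$. Using the symmetry and positive semi-definiteness of $\widehat C_t$ (respectively $\widehat C_t + \alpha_{\mathrm{vi}} C_{\mathrm{vi}}$),
\begin{equation*}
\frac{d}{dt} V(\Theta(t)) = -\sum_{j\in J} \bigl\langle D_\theta \Phi^{\mathrm{reg}}(\theta^{(j)}),\, \widehat{C}_t\, D_\theta \Phi^{\mathrm{reg}}(\theta^{(j)}) \bigr\rangle \le 0.
\end{equation*}
Since $\tilde A$ is injective on $X$, the quadratic form $\Phi^{\mathrm{reg}}$ is coercive, and the bound $V(\Theta(t)) \le V(\Theta(0))$ yields uniform boundedness of each $\|\theta^{(j)}(t)\|$, contradicting blow-up.

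\textbf{The hard case: basic EKI.} For \eqref{EKI_basic_lin} and \eqref{EKI_VI} the potential $\Phi$ fails to be coercive on $S$ whenever $A|_S$ has non-trivial kernel, so the direct Lyapunov argument breaks down; this is the main obstacle. I would follow the energy-estimate strategy of \cite{Schillings2017}: split $S = \ker(A|_S) \oplus \ker(A|_S)^\perp$, decompose $\theta^{(j)} = \theta^{(j)}_{\mathrm{k}} + \theta^{(j)}_{\perp}$ accordingly, and first show that the residuals $r^{(j)}(t) := A\theta^{(j)}(t) - y^\dagger$ satisfy a matrix Riccati-type inequality implying $\sup_{t<T^*}\|r^{(j)}(t)\|<\infty$. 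This controls both $\widehat{C}_t$ and $\theta^{(j)}_{\perp}$ through the injective part of $A|_S$; finally, a Grönwall argument applied to the ODE for $\theta^{(j)}_{\mathrm{k}}$, whose right-hand side is a product of $\widehat{C}_t$ with the bounded residuals, bounds the kernel component on $[0,T^*)$. This precludes blow-up and promotes the local solution to the desired global $C^1$ solution.
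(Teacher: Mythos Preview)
The paper does not prove this theorem; it is quoted verbatim from \cite{Schillings2017} and \cite{Tong2020} as background, so there is no in-paper argument to compare against. Your proposal is a faithful reconstruction of the strategy in those references: local existence from the polynomial (hence locally Lipschitz) right-hand side, subspace invariance from the inner-product reformulation, and a Lyapunov/energy argument to rule out blow-up. The split into the ``easy'' coercive case \eqref{eki_regul_lin} and the ``hard'' non-coercive cases \eqref{EKI_basic_lin}, \eqref{EKI_VI} is exactly the right diagnosis, and your outline of the kernel/orthogonal-complement decomposition with a Riccati-type bound on the residuals is the approach of \cite{Schillings2017}.

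Two small points worth tightening. First, your flag on $\mathrm{Range}(C_{\mathrm{vi}})\subseteq S$ is correct and necessary: without it the conclusion $\theta^{(j)}(t)\in S$ for \eqref{EKI_VI} is simply false, so this is an implicit hypothesis of the theorem (in \cite{Tong2020} one typically takes $C_{\mathrm{vi}}$ to be the initial empirical covariance, which lives in $S$ automatically). Second, in the hard case your sketch stops short of the key quantitative step: one needs an explicit a priori bound on $\widehat{C}_t$ (equivalently on $\sum_j\|e^{(j)}(t)\|^2$), not just on the mapped residuals, before the Gr\"onwall argument on the kernel component can close. In \cite{Schillings2017} this comes from the identity $\frac{\mathrm{d}}{\mathrm{d}t}\widehat{C}_t = -2\,\widehat{C}_t A^\top A\,\widehat{C}_t$, which immediately gives monotone decrease of $\mathrm{tr}(\widehat{C}_t)$; this is the missing ingredient that makes the final Gr\"onwall step go through.
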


The convergence of the EKI estimate to the true parameter is restricted to the span of the initial ensemble $S$. More precisely, the particles stay in the affine space $\theta_0^\perp +\mathcal E$ for all $t \ge 0$, cp. \cite[Corollary 3.8]{Tong2020}, where
\begin{equation}
    \mathcal E=\spann\{e^{(1)}(0), \ldots, e^{(\Nens)}(0)\} \notag
\end{equation}
with 
\begin{equation}
e^{(j)}=\theta^{(j)}-\bar\theta\,, \qquad (j\in\{1,\ldots,\Nens\})\,\notag
\end{equation}
$
\theta_0^\perp=\bar\theta(0)-P_{\mathcal E} \bar\theta(0)$, and
$P_{\mathcal{E}}$ being the projection onto the subspace $\mathcal{E}$.

This implies that the accuracy of EKI is bounded below by the accuracy of the best approximation in $\theta_0^\perp +\mathcal E$. We summarise in the following the main convergence results for the various variants of EKI.

\begin{theorem}[\protect{\cite[Theorem~3.3]{Schillings2017}},\protect{\cite[Theorem~3.13]{Tong2020}}]
Assume that $$\spann\{Ae^{(1)}(0), \ldots, Ae^{(\Nens)}(0)\}=\mathbb R^{\Nobs}.$$ Then, the residuals of EKI mapped under the forward operator converge to $0$. It holds that
    \begin{itemize}
        \item the rate of convergence for EKI without variance inflation is 
        \begin{equation}\|A\theta^{(j)}-y\|_\Gamma^2\in \mathcal O(t^{-1}) \qquad \forall j\in\{1,...,J\}\,,\notag \end{equation}
        \item the rate of convergence for EKI with variance inflation is \begin{equation}\|A\theta^{(j)}-y\|_\Gamma^2\in \mathcal O(e^{-ct}) \qquad \forall j\in\{1,...,J\}, \notag\end{equation}
        for a constant $c>0$.
    \end{itemize}
    
Assume that $\mathcal E=X$. Then, the particles of TEKI converge to the minimiser of the regularised least-squares problem $\theta_{\mbox{\rm reg}}^\dagger$, which is given by $\theta_{\mbox{\rm reg}}^\dagger:=\left(A^T A\right)^{-1}A^Ty$ and respectively $\theta_{\mbox{\rm reg}}^\dagger:=\left(\tilde A^T \tilde A\right)^{-1}\tilde A^T \tilde y$ when considering regularisation. It holds that
    \begin{itemize}
        \item the rate of convergence for TEKI without variance inflation is 
        \begin{equation}\|\theta^{(j)}-\theta_{\mbox{\rm reg}}^\dagger\|_X^2\in \mathcal O(t^{-1}) \qquad \forall j\in\{1,...,J\}\,,\notag\end{equation}
        \item the rate of convergence for TEKI with variance inflation is \begin{equation}\|\theta^{(j)}-\theta_{\mbox{\rm reg}}^\dagger\|_X^2\in \mathcal O(e^{-ct}) \qquad \forall j\in\{1,...,J\}\notag\end{equation}
        for a constant $c>0$.
    \end{itemize}

\end{theorem}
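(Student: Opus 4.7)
The plan is to follow the blueprint of \cite{Schillings2017,Tong2020}: work in observation space, derive a closed matrix Riccati equation for the image-space empirical covariance, and then translate the residual decay into parameter-space convergence for TEKI by exploiting the injectivity introduced by the regularisation.

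I would first treat the basic EKI bounds. Setting $r^{(j)}(t):=A\theta^{(j)}(t)-y$ and $E(t):=A\widehat{C}_t A^\top$, applying $A$ to both sides of \eqref{EKI_basic_lin} gives $\dot r^{(j)}(t)=-E(t)\,r^{(j)}(t)$. Using the rank-$\Nens$ representation $E(t)=(\Nens-1)^{-1}\sum_{j}Ae^{(j)}(t)\otimes Ae^{(j)}(t)$ with $e^{(j)}=\theta^{(j)}-\bar\theta$, together with $\dot e^{(j)}=-\widehat{C}_t A^\top A\,e^{(j)}$, a short calculation yields the matrix Riccati equation $\dot E(t)=-2E(t)^2$. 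Under the spanning hypothesis $E(0)$ is invertible, so commutativity of $\{E(t)\}_{t\ge 0}$ with $E(0)$ allows one to integrate explicitly and obtain $E(t)^{-1}=E(0)^{-1}+2tI$, hence $E(t)\succeq (2t+\|E(0)^{-1}\|)^{-1}I$. Substituting this lower bound into the residual ODE and applying a Gronwall-type estimate delivers $\|r^{(j)}(t)\|^2\in\mathcal O(t^{-1})$.

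For the variance-inflated version the residual ODE reads $\dot r^{(j)}=-\bigl(E(t)+\alpha_{\mathrm{vi}}\,AC_{\mathrm{vi}}A^\top\bigr)r^{(j)}$. The spanning assumption and the positivity of $C_{\mathrm{vi}}$ yield a constant $c>0$ with $\alpha_{\mathrm{vi}}AC_{\mathrm{vi}}A^\top\succeq cI$ on $\mathbb{R}^{\Nobs}$, and Gronwall produces the exponential bound $\|r^{(j)}(t)\|^2\le e^{-2ct}\|r^{(j)}(0)\|^2$. For the TEKI statements I would repeat these arguments verbatim with the augmented pair $(\tilde A,\tilde y)$ from \eqref{eq:minreg}. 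Regularisation makes $\tilde A^\top\tilde A$ invertible, so $\theta^\dagger_{\mathrm{reg}}=(\tilde A^\top\tilde A)^{-1}\tilde A^\top\tilde y$ is well-defined; the normal equations give $\langle \tilde A(\theta^{(j)}-\theta^\dagger_{\mathrm{reg}}),\tilde A\theta^\dagger_{\mathrm{reg}}-\tilde y\rangle=0$, which identifies $\|\tilde A\theta^{(j)}-\tilde y\|^2-\|\tilde A\theta^\dagger_{\mathrm{reg}}-\tilde y\|^2$ with $\|\tilde A(\theta^{(j)}-\theta^\dagger_{\mathrm{reg}})\|^2$. Combining this with $\lambda_{\min}(\tilde A^\top\tilde A)>0$ lifts the residual rate to the corresponding rate in parameter space. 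The condition $\mathcal E=X$ is exactly what is needed so that the particles are not trapped in a strict affine subspace away from $\theta^\dagger_{\mathrm{reg}}$.

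The main obstacle is the first step: showing that $E(t)$ stays strictly positive definite for all $t\ge 0$ and extracting the sharp inverse-linear rate. This relies on the closed Riccati identity $\dot E=-2E^2$ and, crucially, on commutativity of $\{E(t)\}_{t\ge 0}$ with $E(0)$, without which one cannot invert the ODE in closed form; once that is in place, the variance-inflation bound and the TEKI lifting are standard Gronwall and normal-equation arguments.
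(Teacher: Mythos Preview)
The paper does not give its own proof of this theorem; it is stated as a summary result and attributed to \cite{Schillings2017} and \cite{Tong2020}, with no proof environment following the statement. Your sketch is essentially the argument of those references: pass to observation space, derive the closed Riccati equation $\dot E=-2E^2$ for $E=A\widehat{C}_tA^\top$, read off the $t^{-1}$ decay of $\lambda_{\min}(E(t))$, and then lift to parameter space in the regularised case via injectivity of $\tilde A^\top\tilde A$.

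One remark on what you flag as the ``main obstacle'': you do not actually need a separate commutativity argument. The candidate $E(t)=(E(0)^{-1}+2tI)^{-1}$ can be verified directly to solve $\dot E=-2E^2$ (differentiate $E^{-1}$), and uniqueness of the ODE solution then gives you the closed form without ever invoking that $E(t)$ and $E(0)$ commute. Also, for the variance-inflation step your use of $AC_{\mathrm{vi}}A^\top\succeq cI$ on $\mathbb R^{\Nobs}$ implicitly requires $A$ to be surjective; this is indeed guaranteed by the spanning hypothesis $\spann\{Ae^{(j)}(0)\}=\mathbb R^{\Nobs}$, but it is worth making that link explicit.
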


The assumption on the affine space $\mathcal E=X$ is rather restrictive and usually not satisfied in practice. We discuss in the following the generalisation of the convergence result to the more general setting $\theta_0^\perp +\mathcal E\subset X$. 
The best approximation in this space is given by the solution $\theta_{\mathcal E}^{\dagger}$ of the following constrained optimisation problem
\begin{equation}
    \min_{\theta\in\mathcal E} \frac12  \|\tilde A (\theta+\theta_0^\perp) - \tilde y\|^2\,,\notag
\end{equation}
which can be equivalently formulated as the unconstrained optimisation problem
\begin{equation}\label{eqn:constrTEKI}
    \min_{c\in \mathbb R^{\Nens-1}} \frac12  \|\tilde A E c - (\tilde y- \tilde A\theta_0^\perp)\|^2\,,
\end{equation}
with $E$ denoting a basis of $\mathcal E$ (w.l.o.g. $\dim(\mathcal E)=\Nens-1)$. Then, TEKI can be formulated in the coordinate system of $E$ and convergence results can be straightforwardly generalised. Note that convergence then follows to the minimiser of \ref{eqn:constrTEKI}, i.e. the best approximation of \ref{eq:minreg} in the affine space $\theta_0^\perp +\mathcal E$. We refer to \cite{Tong2020} for more details on the derivation of the convergence result.We will denote in the following the optimiser of the constrained optimisation problem by $\theta^\star$, i.e.

\begin{equation}
    \theta^* \in \argmin_{\theta\in\mathcal E} \frac12  \|\tilde A (\theta+\theta_0^\perp) - \tilde y\|^2\,,\notag
\end{equation}

\section{Subsampling in continuous time} \label{sec_subsampling_flows}

In this work, we are interested in certain stochastic approximations of ODEs, indeed, we are studying EKIs in which we randomly replace the potential $\Phi^{\rm reg}$ by one of the $(\Phi_i^{\rm reg})_{i \in I}$. We now introduce and study a framework in which we are able to consider the subsampling of (actually, more general) flows, before then discussing the subsampling in EKI.

\subsection{A general framework and result} \label{subsec_ctstime}
Let $\mathbf{F}_i: X \times [0, \infty) \rightarrow X$ be Lipschitz continuous for $i \in I$. Moreover, we define $\hPsi = \sum_{i \in I}\mathbf{F}_i/{\Nsub}$.
We study the \emph{full dynamical system} $(\theta(t))_{t \geq 0}$, given by
\begin{align} \label{EQ_full_ode}
    \dot{\theta}(t) &=  - \hPsi(\theta(t),t) \qquad (t > 0) \\
    \theta(0) &= \theta_0 \in X. \notag
\end{align}
and also the \emph{subsampled dynamical system}
\begin{align} \label{EQ_subs_ode}
    \dot{\theta}(t) &=  - \mathbf{F}_i(\theta(t),t) \qquad (t > 0) \\
    \theta(0) &= \theta_0 \in X, \notag
\end{align}
 We denote the flows with respect to $(\mathbf{F}_i)_{i \in I}$ by $(\varphi_t^{(i)})_{i \in I, t \geq 0}$: Let $t > 0, i \in I,$ and $\theta_0 \in X$, then $$\dot{\varphi}_t^{(i)}(\theta_0) = - \mathbf{F}_i({\varphi_t^{(i)}}(\theta_0)), \qquad  {\varphi_0^{(i)}}(\theta_0) = \theta_0.$$ In the same way, we denote the flow with respect to $\hPsi$ by $\bar{\varphi}_t$.

Solving or approximating the full dynamical system \eqref{EQ_full_ode} can be computationally expensive, especially if $\Nsub$ is large. We will now discuss an approximation strategy for this full dynamical system that replaces the full system at any point in time by a randomly selected subsampled system. Hence, in any small time interval, we only need to evaluate \eqref{EQ_subs_ode} for some $i \in I$.

We define the subsampled system through a continuous-time Markov process (CTMP) on $I$, which we call $(\bsi(t))_{t \geq 0}$.
Let $\eta: [0, \infty) \rightarrow (0, \infty)$ be  continuously differentiable and bounded from above. We refer to $\eta(t)$ as \emph{learning rate} at time $t \geq 0$. Let $\bsi: [0, \infty) \times \Omega \rightarrow I$ be the CTMP with transition rate matrix
\begin{equation} 
    \label{eq_transition_rate_mat} A(t) := \frac{1}{(\Nsub-1)\eta(t)}\begin{pmatrix} 1 &  \cdots & 1 \\ \vdots & \ddots & \vdots \\ 1 & \cdots & 1 \end{pmatrix} - \frac{\Nsub}{(\Nsub-1)\eta(t)} \cdot \mathrm{id}_I \qquad (t \geq 0)
\end{equation}
and initial distribution $\bsi(0) \sim \mathrm{Unif}(I)$.
$(\bsi(t))_{t \geq 0}$ is the stochastic process characterised by Algorithm~\ref{algo}.

\begin{algorithm} 
\caption{Sampling $(\bsi(t))_{t \geq 0}$}\label{algo}
\begin{algorithmic}[1]
   \State initialise $\bsi(0) \sim \mathrm{Unif}(I)$ and $t_0 \leftarrow  0$
      \State sample $\Delta$ with survival function $$\mathbb{P}(\Delta \geq t|t_0)  :=  \mathbf{1}[t <0 ] + \exp\left( - \int_{0}^t\eta(u + t_0)^{-1}\mathrm{d}u \right) \qquad (t \in [-\infty, \infty])$$
      \State set $\bsi|_{(t_0, t_0+ \Delta)} \leftarrow \bsi(t_0)$ 
      \State sample $\bsi(t_0 + \Delta) \sim \mathrm{Unif}(I \backslash \{\bsi(t_0)\})$
   \State increment $t_0 \leftarrow t_0 + \Delta$ and go to 2
\end{algorithmic}
\end{algorithm}

Hence, the process $(\bsi(t))_{t \geq 0}$ is a piecewise constant process that jumps from one state to another after random waiting times.
There are several other characterisations of the process $(\bsi(t))_{t \geq 0}$, we refer the reader to \cite{Anderson1991} for general CTMPs on discrete spaces. The algorithmic procedure above goes back to Gillespie \cite{Gillespie}. Properties of this particular CTMP have been studied in  \cite{Latz2021}.
We can now define the stochastic approximation process for $(\mathbf{F}_i)_{i \in I}$ and $(\bsi(t))_{t \geq 0}$.

\begin{definition} 
We define the \emph{stochastic approximation process} given by the family of flows $(\mathbf{F}_i)_{i \in I}$ and the index process $(\bsi(t))_{t \geq 0})$ by the tuple $(\bsi(t), \theta(t))_{t \geq 0}$, with
\begin{align*}
    \dot{\theta}(t) &=  - \mathbf{F}_{\bsi(t)}(\theta(t),t) \qquad (t > 0) \\
    \theta(0) &= \theta_0 \in X,
\end{align*}
\end{definition}

In the following, we are interested in the long time behaviour of the stochastic approximation process. 
\begin{assumption} \label{Assum_conv} Let $K \in \mathbb{N}$ and $X := \mathbb{R}^K$. Let (i)-(ii) hold for any $i \in I$:
\begin{itemize}
    \item[(i)] $\mathbf{F}_i \in C^1(X\times [0, \infty),X)$,
    \item[(ii)] the flow $\varphi_t^{(i)}$ contracts quickly -- in particular, we have a measurable function $h: [0, \infty) \rightarrow \mathbb{R}$, with $\int_0^\infty h(t) \mathrm{d}t = \infty$ such that  $$
     \langle \mathbf{F}_{i}(\varphi_t^{(i)}(\theta_0),t)- \mathbf{F}_{i}(\varphi_t^{(i)}(\theta_1),t), \varphi_t^{(i)}(\theta_0) - \varphi_t^{(i)}(\theta_1) \rangle_X \leq  -h(t) \| \varphi_t^{(i)}(\theta_0) - \varphi_t^{(i)}(\theta_1)\|^2
    $$
    for any two initial values $\theta_0, \theta_1 \in X$.
    
\end{itemize}
\end{assumption}
Note that Assumption~\ref{Assum_conv}(ii) already implies that the flow of $-\hPsi$ is exponentially contracting. The Banach fixed-point theorem implies that the flow has a unique stationary point, which we denote by $\theta^* \in X$. We now generalise one of the main results of \cite{Latz2021} by showing that the stochastic process converges to the unique stationary point $\theta^*$ of the flow $(\overline{\varphi}_t)_{t \geq 0}$ if the learning rate goes to zero. Convergence is measured in terms of the Wasserstein distance
$$
\mathrm{d}_{\rm W}(\pi, \pi') = \inf_{H \in C(\pi, \pi')} \int_{X \times X} \min\{1, \|\theta - \theta'\|^q \} \mathrm{d}H(\theta, \theta'),
$$
where $q \in (0,1]$ and $C(\pi, \pi')$ is the set of couplings of the probability measures $\pi, \pi'$ on $(X, \mathcal{B}X)$.

\begin{theorem}\label{thm_gen_Latz21} Let Assumption~\ref{Assum_conv}(i)-(ii) hold for a constant $h$ and a stochastic approximation process $(\bsi(t), \theta(t))_{t \geq 0}$ with initial values $(i_0, \theta_0) \in I \times X$. Moreover, assume that $\lim_{t \rightarrow \infty}\eta(t) = 0$. Then,
$$
\lim_{t \rightarrow \infty}\mathrm{d}_{\rm W}\left(\delta(\cdot - \theta^*), \mathbb{P}(\theta(t) \in \cdot | \theta_0, i_0)\right) = 0
$$
\end{theorem}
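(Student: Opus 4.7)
The plan is to combine a deterministic contraction argument for the averaged flow $\bar{\varphi}_t$ of $-\hPsi$ with a stochastic averaging principle showing that the CTMP-driven trajectory tracks $\bar{\varphi}$ on bounded time intervals once the learning rate is small. First I would verify that Assumption~\ref{Assum_conv}(ii) with constant $h$ yields pointwise strong monotonicity of each $-\mathbf{F}_i(\cdot,t)$: because the flow $\varphi_t^{(i)}$ is a bijection of $X$ by ODE uniqueness, the inequality along the flow extends to arbitrary pairs $\theta_0,\theta_1\in X$. Averaging over $i\in I$ then gives that $-\hPsi(\cdot,t)$ is $h$-strongly monotone, so $\|\bar{\varphi}_t(\theta)-\bar{\varphi}_t(\theta')\|\le e^{-ht}\|\theta-\theta'\|$. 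Banach's fixed-point theorem applied to the flow map over any fixed positive horizon delivers the unique stationary point $\theta^*$ and the estimate $\|\bar{\varphi}_t(\theta)-\theta^*\|\le e^{-ht}\|\theta-\theta^*\|$.

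Next I would establish uniform-in-time tightness of $(\theta(t))_{t\ge 0}$. Using $V(\theta)=\tfrac12\|\theta-\theta^*\|^2$ as a Lyapunov function and differentiating along the stochastic approximation, the drift $-\langle \mathbf{F}_{\bsi(t)}(\theta(t),t)-\hPsi(\theta^*,t),\theta(t)-\theta^*\rangle$ is controlled after conditioning on the law of $\bsi(t)$ (which, modulo a brief transient, is $\mathrm{Unif}(I)$) and invoking the monotonicity of $-\hPsi$. This yields $\sup_{t\ge 0}\mathbb{E}\|\theta(t)-\theta^*\|^2<\infty$, so up to Wasserstein error at most any prescribed $\varepsilon$ we may restrict attention to initial laws supported on a fixed compact set $K\ni\theta^*$.

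The core technical step is a finite-horizon averaging estimate: for every $S>0$ and $\varepsilon>0$ there exists $\eta_0>0$ such that, whenever $\sup_{[T,T+S]}\eta\le\eta_0$ and $\theta(T)\in K$,
\[
\mathbb{E}\bigl[\min\{1,\|\theta(T+S)-\bar{\varphi}_S(\theta(T))\|^q\}\bigr]\le\varepsilon,
\]
uniformly in $T$. This is the direct generalisation of the corresponding result in \cite{Latz2021} to non-gradient, time-dependent flows: the CTMP $\bsi(\cdot)$ has stationary distribution $\mathrm{Unif}(I)$ with relaxation time of order $\eta(t)$, so the empirical time-average of $\mathbf{F}_{\bsi(s)}$ over intervals of length $\gg \eta(t)$ concentrates on $\hPsi$, and a Gronwall argument (using the local Lipschitz constants of $\mathbf{F}_i$ on $K\times[0,\infty)$) transfers this averaging from the drift to the trajectory.

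To conclude, given $\varepsilon\in(0,1)$ I would first choose $S$ so large that $e^{-hqS}\mathrm{diam}(K)^q\le\varepsilon/2$, then choose $T_0$ so large that $\eta(t)\le\eta_0$ for $t\ge T_0$. For $t\ge T_0+S$, conditioning on $\theta(t-S)$ and using the triangle inequality for $\mathrm{d}_{\rm W}$ gives
\[
\mathrm{d}_{\rm W}\bigl(\delta(\cdot-\theta^*),\mathbb{P}(\theta(t)\in\cdot)\bigr) \le \mathbb{E}\bigl[\min\{1,\|\theta(t)-\bar{\varphi}_S(\theta(t-S))\|^q\}\bigr] + \mathbb{E}\bigl[\min\{1,\|\bar{\varphi}_S(\theta(t-S))-\theta^*\|^q\}\bigr],
\]
with the first summand controlled by the averaging estimate and the second by contraction of $\bar{\varphi}$ combined with tightness; letting $\varepsilon\to 0$ yields the claim. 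The main obstacle is the quantitative averaging estimate of the third paragraph: care is needed to make the bound depend only on $\sup_{[T,T+S]}\eta$ and not on $T$ itself, since $\hPsi$ may be explicitly time-dependent, which is why the constancy of $h$ in Assumption~\ref{Assum_conv}(ii) and the $C^1$-smoothness of each $\mathbf{F}_i$ on compact sets are essential.
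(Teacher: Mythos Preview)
Your proposal takes a genuinely different route from the paper's. The paper introduces an auxiliary process $(\bsi^{(\varepsilon)}(t),\theta^{(\varepsilon)}(t))_{t\ge 0}$ obtained by capping the transition rate matrix at the level it reaches at time $-\log\varepsilon$. Its key observation is that two copies of this auxiliary process coupled through the \emph{same} realisation of the index process contract pathwise: Assumption~\ref{Assum_conv}(ii) applies on each inter-jump piece, and Gr\"onwall gives $\|\theta^{(\varepsilon)}(t)-\theta^{(\varepsilon)}_\dagger(t)\|^2\le\exp(-2\int_0^t h)\,\|\theta^{(\varepsilon)}(0)-\theta^{(\varepsilon)}_\dagger(0)\|^2$, hence Wasserstein contraction of $K^{(\varepsilon)}_{t|0}$ and existence of a unique stationary measure $\mu_\varepsilon$ via a Cauchy-sequence argument. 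The remaining facts --- that $\mu_\varepsilon\to\delta(\cdot-\theta^*)$ and $K^{(\varepsilon)}\to K$ as $\varepsilon\to 0$ --- are identified with Proposition~4 of \cite{Latz2021} and not reproved; a final triangle inequality assembles everything. By contrast, you bypass the auxiliary process entirely and argue directly via a finite-horizon averaging principle for the original process combined with contraction of $\bar\varphi$. What you gain is directness (no detour through a family of stationary measures); what the paper gains is that the same-index coupling turns a stochastic contraction question into a deterministic one, so ergodicity of the $\varepsilon$-process is essentially free and all the averaging work is deferred to \cite{Latz2021}.

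One genuine weakness in your outline: the tightness step in your second paragraph is not correct as written. You propose to control $\dot V(\theta(t))$ by ``conditioning on the law of $\bsi(t)$'' so that the drift averages to $\hPsi$, but $\theta(t)$ is correlated with $\bsi(t)$, so $\mathbb{E}[\mathbf{F}_{\bsi(t)}(\theta(t),t)]\neq\mathbb{E}[\hPsi(\theta(t),t)]$ in general. The repair is to use the pointwise strong monotonicity of each $-\mathbf{F}_i(\cdot,t)$ that you already extracted in your first step: add and subtract $\mathbf{F}_{\bsi(t)}(\theta^*,t)$ to obtain the pathwise bound $\dot V\le -2hV+C\sqrt{2V}$ with $C=\sup_{i,t}\|\mathbf{F}_i(\theta^*,t)\|$, from which boundedness follows. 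Note this implicitly requires $\sup_{i,t}\|\mathbf{F}_i(\theta^*,t)\|<\infty$, which is not part of Assumption~\ref{Assum_conv} but holds in the EKI applications. The paper's same-index coupling sidesteps this issue entirely, since it never needs an a priori bound on a single trajectory.
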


\begin{proof}
Please see \ref{thm_gen_Latz21_app}.
\end{proof}
Hence, the process converges to the Dirac measure $\delta(\cdot - \theta^*)$.

\subsection{Ensemble Kalman inversion with subsampling}

We introduce two possibilities of subsampling: EKI with single subsampling and EKI with batch-subsampling. In the first case, we choose a single subsample $y_i^\dagger$ from $y^\dagger$ essentially replace the potential in \eqref{EKI_basic} by the subsampled potential $\Phi_i$. In the mini-batching case, we pick a total of $\Nens$ subsamples $\left(y_{i(1)}^\dagger, \ldots, y_{i(\Nens)}^\dagger\right)$, i.e. one subsample for each particle in the ensemble. Then, we evolve each of the ensemble members with respect to their data subsample, i.e. $\theta^{(j)}(t)$ is evolved with respect to $\Phi_{i(j)}$, for $j = 1,\ldots,\Nens$.

After one remark, we continue by defining the single subsampling.
\begin{remark} When defining the subsampling algorithms, we will only refer to the basic Ensemble Kalman inversion.
Of course, it is possible to combine subsampling with Tikhonov regularisation. In this case, we replace the potential $\Phi_i$ in \eqref{EKI_subsampl} with
\begin{align*}
 \Phi_i'(\theta) &:= \Phi_i(\theta) + \frac{\alpha}{2\Nsub} \|\theta \|^2_X 
\end{align*}
for $i \in I$. In the same way, one can combine subsampling with variance inflation, by replacing the empirical covariance $\widehat{C}_t$ with the inflated covariance $(\widehat{C}_t + \alpha' C_{\rm vi})$.
\end{remark}
\subsubsection*{Single subsampling}
The essential idea is now the following: at every time step, we follow the EKI flow of the potential $\Phi_i$, for one random $i \in I$, as determined by $(\bsi(t))_{t \geq 0}$.
Indeed, the \emph{Ensemble Kalman inversion with single subsampling} is defined via the dynamical system
\begin{align} \label{EKI_subsampl}
    \frac{\mathrm{d} \theta^{(j)}(t)}{\mathrm{d}t} &= - \widehat{C}_t D_\theta \Phi_{\bsi(t)}(\theta^{(j)}(t))\qquad (j \in J) \\ \theta(0) &= \theta_0 \notag.
\end{align}
We illustrate this single subsampling strategy in Figure~\ref{fig:cartoon_single}.

\subsubsection*{Batch-subsampling}
In batch-subsampling, we define a set $\hat{I} \subseteq I^\Nens$ such that for all $i \in \{1,\ldots,\Nsub\}, j \in J$, the number of elements in $\{\hat{i} \in \hat{I}: \hat{i}_j = i \}$ is identical.
Moreover, we define a stochastic process $\bsi:  [0, \infty) \times \Omega \rightarrow \hat{I}$.
Here, the coordinate processes $(\bsi(t;j))_{t \geq 0}$, for $j = 1,\ldots,\Nens$, are stochastically independent CTMPs with transition rate matrix $(A(t))_{t \geq 0}$, as given in \eqref{eq_transition_rate_mat}.
The process $(\bsi(t;j))_{t \geq 0}$ now represents the data subset with which the particle $\theta^{(j)}$ is evolved, for $j = 1,\ldots,\Nens$. Hence, the \emph{Ensemble Kalman inversion with batch-subsampling} is given by
\begin{align*} 
    \frac{\mathrm{d} \theta^{(j)}(t)}{\mathrm{d}t} &= - \widehat{C}_t D_\theta \Phi_{\bsi(t;j)}(\theta^{(j)}(t))\qquad (j \in J) \\ \theta(0) &= \theta_0. \notag
\end{align*}
We illustrate the batch subsampling strategy in Figure~\ref{fig:cartoon_batch}.

\begin{figure}
\centering
\input{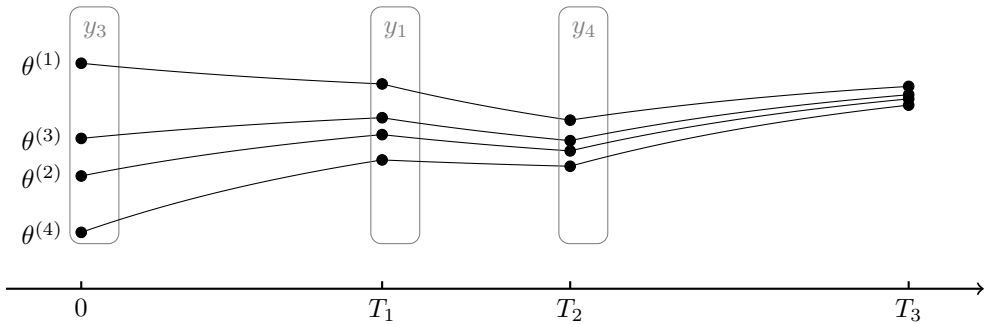}
\caption{Cartoon of EKI with single subsampling: same data subset $y_1,y_2,...$ for each ensemble member.}
\label{fig:cartoon_single}
\end{figure}

\begin{figure}
\centering
\input{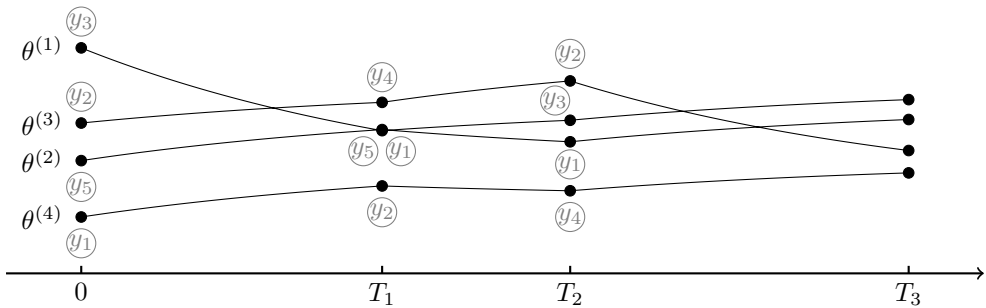}
\caption{Cartoon of EKI with batch subsampling: different data subsets $y_1,y_2,...$ for each ensemble member.}
\label{fig:cartoon_batch}
\end{figure}

\section{Analysis of the ensemble Kalman inversion with subsampling in the linear setting} \label{sec_subsEKI}

We present in the following a convergence theory for the two subsampling strategies in the linear setting. Our goal is to verify the Assumptions \ref{Assum_conv}, in particular the condition
\begin{align*}
-\langle \theta_1 -  \theta_2, \mathbf{F}_i(\theta_1,t) - \mathbf{F}_i(\theta_2,t) \rangle \leq -h(t) \| \theta_1 - \theta_2\|^2,
\end{align*}
for $t$ large enough, with $- \mathbf{F}_i(\theta(t),t)$ denoting the right hand side of the dynamical system and $h: [0, \infty) \rightarrow \mathbb{R}$ being a measurable function. We will see that the analysis of the ensemble collapse will play a central role for the construction of the function $h$. In order to derive convergence results in the parameter space, we will focus in the following on the regularised setting, i.e. we consider the potential $\Phi^{\mbox{\rm reg}}$.

\subsection{TEKI with variance inflation}

The variance inflation allows to explicitly control the ensemble collapse by controlling the preconditioner in the following way:

\begin{theorem}[Single Subsampling TEKI with variance inflation]\label{thm:ssTEKI}
Let $(\theta^{(j)}(t))_{t \geq 0, j \in J}$ satisfy 
\begin{align*} 
    \frac{\mathrm{d} \theta^{(j)}(t)}{\mathrm{d}t} &= - (\widehat{C}_t + \alpha_{\rm vi} C_{\rm vi})  D_\theta \Phi^{\mbox{\rm reg}}_{\bsi(t)}(\theta^{(j)}(t))\qquad (j \in J) \\ \theta(0) &= \theta_0 \notag.
\end{align*}
with index process $(\bsi(t))_{t \geq 0})$ and 
\begin{align*}
\Phi^{\mbox{\rm reg}}_{\bsi(t)}(\theta^{(j)}(t))=\frac 12 \|\tilde y_{\bsi(t)}-\tilde A_{\bsi(t)}\theta\|^2, \quad \tilde A_{\bsi(t)}=\begin{pmatrix}A_{\bsi(t)} \\ \left(\frac{\alpha}{\Nsub}C_0\right)^{\frac12}\end{pmatrix}, \quad \tilde y=\begin{pmatrix}y_{\bsi(t)}^\dagger \\ 0\end{pmatrix},    
\end{align*}
with $\alpha, \alpha_{vi}>0, \widehat{C}_t$ denoting the empirical covariance matrix of the particles $\theta^{(j)}(t)$, and $C_0, C_{vi}$ being symmetric positive definite matrices. The tuple $(\bsi(t), \theta(t))_{t \geq 0}$ denotes the single subsampling TEKI solution with variance inflation.
 Then, $$
\lim_{t \rightarrow \infty}\mathrm{d}_{\rm W}\left(\delta(\cdot - \theta^*), \mathbb{P}(\theta^{(j)}(t) \in \cdot | \theta_0, i_0)\right) = 0 \qquad (j \in J).$$
\end{theorem}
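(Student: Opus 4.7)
The strategy is to embed the single subsampling TEKI with variance inflation into the stochastic-approximation framework of Section~\ref{subsec_ctstime} and to invoke Theorem~\ref{thm_gen_Latz21}. Defining, for each $i\in I$,
\[
\mathbf{F}_i(\theta,t) \;:=\; (\widehat{C}_t + \alpha_{\rm vi} C_{\rm vi})\, D_\theta \Phi^{\rm reg}_i(\theta)
\;=\; (\widehat{C}_t + \alpha_{\rm vi} C_{\rm vi})\, \tilde A_i^\top (\tilde A_i \theta - \tilde y_i),
\]
the system in the statement coincides exactly with the stochastic approximation process driven by $(\bsi(t))_{t \geq 0}$; the averaged flow $-\overline{\mathbf F} = -\tfrac{1}{\Nsub}\sum_{i\in I}\mathbf F_i$ is, up to the constant factor $1/\Nsub$, the ordinary TEKI-vi flow \eqref{eki_regul_lin}, whose unique stationary point is $\theta^\star$. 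The condition $\lim_{t\to\infty}\eta(t)=0$ is inherited from the CTMP setup, so it suffices to verify Assumption~\ref{Assum_conv}(i)--(ii) with a uniform constant $h>0$.

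Item (i) is immediate: $\widehat{C}_t$ is a polynomial in the ensemble, $C_{\rm vi}$ and $\tilde A_i^\top\tilde A_i$ are constant operators, and $\theta\mapsto\tilde A_i^\top(\tilde A_i\theta-\tilde y_i)$ is affine, so $\mathbf F_i\in C^1$. For item (ii), two structural bounds are available: variance inflation yields $\widehat{C}_t + \alpha_{\rm vi} C_{\rm vi} \succeq \alpha_{\rm vi}\lambda_{\min}(C_{\rm vi})\,\mathrm{Id}$, preventing preconditioner degeneration, while Tikhonov regularisation yields $\tilde A_i^\top\tilde A_i \succeq \tfrac{\alpha}{\Nsub}C_0 \succeq \tfrac{\alpha\lambda_{\min}(C_0)}{\Nsub}\,\mathrm{Id}$ uniformly in $i\in I$.

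The main obstacle is that these two SPD bounds do not directly supply a Euclidean monotonicity estimate, because the product of two SPD matrices need not have a positive symmetric part; moreover $\widehat{C}_t$ itself depends on the ensemble state, so the difference of two flow trajectories carries an additional drift coming from the difference of preconditioners. I plan to handle both issues simultaneously by working in the time-varying weighted inner product associated with $M(t):=\widehat{C}_t + \alpha_{\rm vi} C_{\rm vi}$: differentiating $\langle e,M(t)^{-1}e\rangle$ along the flow of $-\mathbf F_i$ produces the leading contractive term
\[
-\,2\,\langle \tilde A_i^\top\tilde A_i\,e,\,e\rangle \;\leq\; -\,\tfrac{2\alpha\lambda_{\min}(C_0)}{\Nsub}\,\|e\|^2,
\]
in which the preconditioner has cancelled, plus a perturbation involving $\dot M$ and the preconditioner-difference drift. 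This perturbation is controlled uniformly by first showing -- as a consequence of variance inflation together with the coercivity of $\tilde A_i^\top\tilde A_i$ -- that each particle satisfies an exponentially dissipative linear equation with bounded forcing, so that $\widehat{C}_t$ stays in a compact subset of symmetric positive semi-definite matrices and $\|\dot{\widehat C_t}\|$ is uniformly bounded in $t$. The resulting error may then be absorbed into the leading contractive term, and the spectral equivalence of $\|\cdot\|_{M(t)^{-1}}$ with the Euclidean norm (via the upper and lower bounds on $M(t)$) converts the weighted contraction into a Euclidean one with a uniform constant $h>0$. Theorem~\ref{thm_gen_Latz21} then yields the claimed Wasserstein convergence to $\delta(\cdot-\theta^\star)$.
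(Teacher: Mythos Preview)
Your overall strategy—casting the system as a stochastic approximation process and invoking Theorem~\ref{thm_gen_Latz21}—is exactly the paper's. The paper likewise handles the state-dependence of $\widehat{C}_t$ by treating the preconditioner-difference drift as a higher-order perturbation, using an a priori exponential convergence of each fixed-$i$ flow (Lemma~\ref{lemma:convex_vi_ss_app}). The genuine difference is in the leading contraction term: the paper works directly in the Euclidean inner product and simply asserts
\[
\langle e,\mathbf{C}^1_t\tilde A^\top\tilde A\,e\rangle\ \geq\ \lambda_{\min}(\mathbf{C}^1_t)\,\lambda_{\min}(\tilde A^\top\tilde A)\,\|e\|^2,
\]
arriving at the constant $h=\alpha_{\rm vi}\lambda_{\min}(C_{\rm vi})\min_i\lambda_{\min}(\tilde A_i^\top\tilde A_i)$. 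Your concern about products of SPD matrices is well taken—this eigenvalue-product bound is not valid in general—and your weighted-norm route with $M(t)^{-1}$ is a legitimate alternative that sidesteps the issue by cancelling the preconditioner.

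Two points in your plan need sharpening. First, ``$\|\dot{\widehat C}_t\|$ uniformly bounded'' is not enough to absorb the $\dot M$ term into the contraction; a merely bounded perturbation could overwhelm the contractive constant. What you actually have from the exponential dissipation you mention is stronger: along each fixed-$i$ flow the particles converge exponentially to the minimiser of $\Phi^{\rm reg}_i$, so $\dot{\widehat C}_t\to 0$ exponentially and the $\dot M$ term becomes negligible for $t$ large—this is precisely the mechanism the paper uses for its cross term. Second, spectral equivalence of $\|\cdot\|_{M(t)^{-1}}$ and $\|\cdot\|$ does \emph{not} convert a weighted differential inequality into the Euclidean one required by Assumption~\ref{Assum_conv}(ii), because $M(t)$ is time-varying. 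It does, however, yield exponential decay of $\|e\|$, which is exactly the contraction estimate~\eqref{eq_contraction} used in the proof of Lemma~\ref{aux_lemma}; so your argument still closes, but by feeding directly into that lemma rather than by literally instantiating Assumption~\ref{Assum_conv}(ii) in the Euclidean norm.
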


\begin{proof}
By Theorem \ref{thm_gen_Latz21}, we know that convergence of $(\theta^{(j)}(t))$, for $j \in J$, follows if Assumptions \ref{Assum_conv} are satisfied.
The continuous differentiability of the right hand side follows straightforwardly from the definition of the TEKI.  
The inequality holds with $$h(t)=\alpha_{vi}\lambda_{\min}(C_{vi})\left(\min_{1,...,\Nsub}\lambda_{\min}(\tilde A_i^T\tilde A_i)\right),$$ where obviously $\int_0^\infty h(t)\mathrm{d}t=\infty$ is fulfilled. The details on the construction of $h$ are given in Lemma \ref{lemma:convex_vi_ss_app}.
\end{proof}

\begin{theorem}[Batch subsampling TEKI with variance inflation]
Let $(\theta^{(j)}(t))_{t \geq 0, j \in J}$ satisfy 
\begin{align*} 
    \frac{\mathrm{d} \theta^{(j)}(t)}{\mathrm{d}t} &= - (\widehat{C}_t + \alpha_{\rm vi} C_{\rm vi}) D_\theta \Phi^{\mbox{\rm reg}}_{\bsi(t;j)}(\theta^{(j)}(t))\qquad (j \in J) \\ \theta(0) &= \theta_0. \notag
\end{align*}
with index process $(\bsi(t;j))_{t \geq 0, j\in{1,\ldots,\Nens}})$ and 
\begin{align*}
\Phi^{\mbox{\rm reg}}_{\bsi(t;j)}(\theta^{(j)}(t))=\frac 12 \|\tilde y_{\bsi(t;j)}-\tilde A_{\bsi(t;j)}\theta\|^2, \quad \tilde A_{\bsi(t;j)}=\begin{pmatrix}A_{\bsi(t;j)} \\ \left(\frac{\alpha}{\Nsub} C_0\right)^{\frac12}\end{pmatrix}, \quad \tilde y_{\bsi(t;j)}=\begin{pmatrix}y_{\bsi(t;j)}^\dagger \\ 0\end{pmatrix},    
\end{align*}
with $\alpha,\alpha_{vi}>0, \widehat{C}_t$ denoting the empirical covariance matrix of the particles $\theta^{(j)}(t)$, and $C_0, C_{vi}$ being symmetric positive definite matrices. The tuple $(\bsi(t;j), \theta(t))_{t \geq 0}$ denotes the batch subsampling TEKI solution with variance inflation.
 Then, $$
\lim_{t \rightarrow \infty}\mathrm{d}_{\rm W}\left(\delta(\cdot - \theta^*), \mathbb{P}(\theta^{(j)}(t) \in \cdot | \theta_0, i_0)\right) = 0,$$
for $j \in J$.
\end{theorem}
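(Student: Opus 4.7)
The approach is to cast the batch-subsampling dynamics as a single stochastic approximation process on the product state space $X^{\Nens}$ so that Theorem~\ref{thm_gen_Latz21} applies, essentially mirroring the argument for Theorem~\ref{thm:ssTEKI}. Set $\vTheta := (\theta^{(1)},\ldots,\theta^{(\Nens)}) \in X^{\Nens}$ and combine the independent coordinate CTMPs into a single index process $\bsi(t) := (\bsi(t;1),\ldots,\bsi(t;\Nens))$ taking values in $\hat I \subseteq I^{\Nens}$. For each $\hat i = (i_1,\ldots,i_{\Nens}) \in \hat I$, define
$$
\mathbf{F}_{\hat i}(\vTheta,t) := \bigl( (\widehat C_t + \alpha_{\rm vi} C_{\rm vi})\, \tilde A_{i_j}^\top (\tilde A_{i_j} \theta^{(j)} - \tilde y_{i_j}) \bigr)_{j=1}^{\Nens},
$$
so that the batch system rewrites as $\dot{\vTheta}(t) = -\mathbf{F}_{\bsi(t)}(\vTheta(t),t)$.

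First I would verify Assumption~\ref{Assum_conv}(i): each $\mathbf{F}_{\hat i}$ is continuously differentiable in $\vTheta$ since $\widehat C_t$ depends polynomially on the particles and each $\tilde A_{i_j}$ is linear. The heart of the proof is then the contraction condition in Assumption~\ref{Assum_conv}(ii). The key observation is that the preconditioner $\widehat C_t + \alpha_{\rm vi} C_{\rm vi}$ is the \emph{same} for all particles and satisfies $\widehat C_t + \alpha_{\rm vi} C_{\rm vi} \succeq \alpha_{\rm vi} C_{\rm vi}$, while for each block the Hessian is $\tilde A_{i_j}^\top \tilde A_{i_j}$, which is strictly positive definite thanks to the Tikhonov regularisation in $\tilde A_{i_j}$. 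Applying the per-particle estimate of Lemma~\ref{lemma:convex_vi_ss_app} blockwise and summing over $j$ yields
$$
\langle \vTheta - \vTheta', \mathbf{F}_{\hat i}(\vTheta,t) - \mathbf{F}_{\hat i}(\vTheta',t) \rangle \geq h \, \|\vTheta - \vTheta'\|^2
$$
with the constant $h = \alpha_{\rm vi}\,\lambda_{\min}(C_{\rm vi}) \cdot \min_{i \in I} \lambda_{\min}(\tilde A_i^\top \tilde A_i) > 0$, crucially \emph{uniform} in the index tuple $\hat i \in \hat I$; this is the same $h$ as in the single-subsampling case, since the lower bound is insensitive to how the indices are distributed across particles.

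Finally I would apply Theorem~\ref{thm_gen_Latz21} to the combined process $(\bsi(t),\vTheta(t))_{t\geq 0}$ on $\hat I \times X^{\Nens}$. Its proof relies only on finiteness of the index set, the bounded scaling $1/\eta(t)$ of the CTMP rates, and the uniform contraction just established; all three transfer to the product setting because the coordinate processes $(\bsi(\cdot;j))_{j\in J}$ are i.i.d.\ copies of the CTMP characterised by Algorithm~\ref{algo}, so their joint process is again a CTMP on $\hat I$ with relaxation timescale $\eta(t)$. This yields Wasserstein convergence of $\vTheta(t)$ to $\delta(\cdot - \vTheta^{\ast})$, where $\vTheta^{\ast} = (\theta^{\ast},\ldots,\theta^{\ast})$ is the unique stationary point of the flow of $-\hPsi$ on $X^{\Nens}$, and the marginal claim for each $\theta^{(j)}(t)$ follows. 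The main obstacle I anticipate is the step of promoting Theorem~\ref{thm_gen_Latz21} to the product index set $\hat I$: one must check that the mixing estimate in its proof survives for the product CTMP, which should follow from independence of coordinates but needs to be stated carefully.
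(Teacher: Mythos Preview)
Your proposal is correct and follows essentially the same route as the paper: the paper's proof is literally ``the proof follows the same lines as the proof of Theorem~\ref{thm:ssTEKI}'', i.e.\ verify Assumption~\ref{Assum_conv} via Lemma~\ref{lemma:convex_vi_ss_app} (case~2, batch subsampling with variance inflation) and then invoke Theorem~\ref{thm_gen_Latz21}, with the same constant $h = \alpha_{\rm vi}\lambda_{\min}(C_{\rm vi})\min_i \lambda_{\min}(\tilde A_i^\top \tilde A_i)$ that you derive. Your explicit product-space reformulation and the remark that Theorem~\ref{thm_gen_Latz21} must be promoted to the product index set $\hat I$ are exactly the points the paper suppresses when it says ``same lines''; the paper does not spell this out either, so your caveat is well placed but not a defect relative to the reference argument.
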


\begin{proof}
The proof follows the same lines as the proof of Theorem \ref{thm:ssTEKI}.
\end{proof}

\subsection{TEKI without variance inflation}

We have seen that the control on the smallest eigenvalue of the preconditioners, i.e. the empirical covariances, is crucial in order to prove convergence. We will consider in the following the more general case, where the smallest eigenvalue converges to $0$ with a rate such that $\int_0^\infty h(t)\mathrm{d}t=\infty$ still holds true and  convergence follows by Theorem \ref{thm_gen_Latz21}.

\begin{theorem}[Single Subsampling TEKI without variance inflation]\label{tmh:ssTEKI}
Let $(\theta^{(j)}(t))_{t \geq 0, j \in J}$ satisfy 
\begin{align} \label{EKI_subsampl_reg}
    \frac{\mathrm{d} \theta^{(j)}(t)}{\mathrm{d}t} &= - \widehat{C}_t   D_\theta \Phi^{\mbox{\rm reg}}_{\bsi(t)}(\theta^{(j)}(t))\qquad (j \in J) \\ \theta(0) &= \theta_0 \notag.
\end{align}
with index process $(\bsi(t))_{t \geq 0})$ and 
\begin{align*}
\Phi^{\mbox{\rm reg}}_{\bsi(t)}(\theta^{(j)}(t))=\frac 12 \|\tilde y_{\bsi(t)}-\tilde A_{\bsi(t)}\theta\|^2, \quad \tilde A_{\bsi(t)}=\begin{pmatrix}A_{\bsi(t)} \\ \left(\frac{\alpha}{\Nsub}C_0\right)^{\frac12}\end{pmatrix}, \quad \tilde y=\begin{pmatrix}y_{\bsi(t)}^\dagger \\ 0\end{pmatrix},    
\end{align*}
with $\alpha>0, \widehat{C}_t$ denoting the empirical covariance matrix of the particles $\theta^{(j)}(t)$, and $C_0$ being a symmetric positive definite matrix. The tuple $(\bsi(t), \theta(t))_{t \geq 0}$ denotes the single subsampling TEKI solution without variance inflation.
 Then, $$
\lim_{t \rightarrow \infty}\mathrm{d}_{\rm W}\left(\delta(\cdot - \theta^*), \mathbb{P}(\theta^{(j)}(t) \in \cdot | \theta_0, i_0)\right) = 0,$$
for $j \in J$.
\end{theorem}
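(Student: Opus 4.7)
The plan is to reduce to Theorem~\ref{thm_gen_Latz21}, so I need to verify Assumption~\ref{Assum_conv}(i)--(ii) for the vector field driving \eqref{EKI_subsampl_reg}. I would first view the ensemble as a single state $\Theta = (\theta^{(j)})_{j\in J}$ in $X^\Nens$ (finite-dimensional by the subspace property) and, using that $\tilde A_i^T\tilde A_i$ is symmetric positive definite thanks to the Tikhonov block $\alpha C_0/\Nsub$, introduce the unique minimizer $\theta_i^\star := (\tilde A_i^T\tilde A_i)^{-1}\tilde A_i^T\tilde y_i$ of $\Phi_i^{\rm reg}$. This lets me rewrite
\[
\mathbf{F}_i(\Theta) = \bigl(\widehat{C}(\Theta)\,\tilde A_i^T\tilde A_i\,(\theta^{(j)}-\theta_i^\star)\bigr)_{j\in J},
\]
from which (i) is immediate since $\mathbf{F}_i$ is polynomial in $\Theta$. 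Note that here, in contrast to Theorem~\ref{thm:ssTEKI}, the preconditioner is just $\widehat{C}_t$, with no additive $\alpha_{\rm vi}C_{\rm vi}$ to supply a uniform lower bound.

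For the contraction condition (ii), I would follow two trajectories $\Theta^{(k)}(t) = \varphi_t^{(i)}(\Theta_k)$ of the fixed-$i$ flow and compute, with $\Delta^{(j)} := \theta_0^{(j)}-\theta_1^{(j)}$,
\[
\frac{d}{dt}\|\Theta^{(0)}-\Theta^{(1)}\|^2 = -2\sum_{j\in J}\bigl\langle \Delta^{(j)},\,\widehat{C}_t^{(0)}\tilde A_i^T\tilde A_i\Delta^{(j)} + \bigl(\widehat{C}_t^{(0)}-\widehat{C}_t^{(1)}\bigr)\tilde A_i^T\tilde A_i(\theta_1^{(j)}-\theta_i^\star)\bigr\rangle.
\]
The first term is the intended contractive piece; the second is a remainder controlled by $\|\widehat{C}_t^{(0)}-\widehat{C}_t^{(1)}\|$ times the distance of $\theta_1^{(j)}$ to $\theta_i^\star$, which I would absorb into the leading term using that the flow $\varphi_t^{(i)}$ is nothing other than standard TEKI applied to the block potential $\Phi_i^{\rm reg}$, so the particles stay bounded and the two empirical covariances agree to leading order as the ensembles both contract toward $\theta_i^\star$.

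The decisive step is to lower bound $\lambda_{\min}(\widehat{C}_t^{(k)})$ along the flow. I would import the ensemble-collapse analysis of the TEKI convergence results (cf.\ \cite{Schillings2017, Tong2020}) applied to $\Phi_i^{\rm reg}$: under the regularisation, the smallest non-trivial eigenvalue of $\widehat{C}_t$ decays no faster than $c/t$ for some $c>0$ that depends only on the initial ensemble. Combined with $\lambda_{\min}(\tilde A_i^T\tilde A_i)>0$, this yields an admissible deterministic $h(t) \sim c'/t$ as $t\to\infty$, and $\int_1^\infty dt/t = \infty$, so Assumption~\ref{Assum_conv}(ii) is met and Theorem~\ref{thm_gen_Latz21} gives the claimed convergence in Wasserstein distance.

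The main obstacle, in my view, is making the $1/t$ ensemble-collapse lower bound genuinely deterministic along \emph{arbitrary} pairs of trajectories (rather than along a single reference trajectory), while simultaneously controlling the remainder term involving $\widehat{C}_t^{(0)}-\widehat{C}_t^{(1)}$. If I could not uniformise directly, I would fall back on first establishing a coarser contraction in a suitably weighted norm (e.g.\ $\|\cdot\|_{\tilde A_i^T\tilde A_i}$) and then transferring back to $\|\cdot\|$ at the end.
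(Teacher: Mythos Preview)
Your proposal is correct and follows essentially the same route as the paper: verify Assumption~\ref{Assum_conv}, split the contraction estimate into the $\widehat{C}_t^{(0)}\tilde A_i^T\tilde A_i$ principal term and the $(\widehat{C}_t^{(0)}-\widehat{C}_t^{(1)})$ remainder (this is exactly Lemma~\ref{lemma:convex_vi_ss_app}), and feed in the $1/t$ lower bound on $\lambda_{\min}(\widehat{C}_t)$ (this is Lemma~\ref{lemma:eig_ss_app}). The obstacle you flag---uniformising $h(t)$ over pairs of initial ensembles while absorbing the covariance-difference remainder---is real and the paper treats it in precisely the informal rate-comparison manner you outline, so you are not missing anything the paper supplies.
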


\begin{proof}
The continuous differentiability of the right hand side follows with the same argument as in Theorem \ref{tmh:ssTEKI}.
The inequality holds with $h(t)=\lambda_{\min}(\widehat{C}^1_t)\min_{i\in\{1,...,\Nsub\}}\lambda_{\min}(\tilde A_i^T\tilde A_i)$, where $\int_0^\infty h(t)\mathrm{d}t=\infty$ is fulfilled due to $\lambda_{\min}(\widehat{C}^1_t)\in \mathcal O(t^{-1})$. The details on the minimal eigenvalue are given in the Appendix \ref{lemma:eig_ss_app}.\\
\end{proof}

In the case of batch subsampling, the rate of convergence of each particle is exponential for a fixed data set, as the ensemble collapse is prevented (under suitable assumptions of the data). This is contrast to the single subsampling case, where the rate of convergence is algebraic due to the ensemble collapse. This can be shown as follows: 
\begin{lemma}\label{thm:Expconv}
Given the $\Nsub$ subsamples $\left(\tilde y_{(1)}^\dagger, \ldots, \tilde y_{(\Nsub)}^\dagger\right)$, assume that the centered initial ensemble is a generator of the full space $X$, i.e. $\spann{\{e_0^{(j)},j \in J\}}=X$. We further assume that $\sum_{j=1}^\Nens (\theta^\dagger_j-\bar \theta^\dagger)(\theta^\dagger_j-\bar \theta^\dagger)^\top$ has full rank $d$. Then the particles converge to the true solution $\theta^\dagger$ exponentially fast, i.e. $\theta^{(j)}\to \theta^\dagger_i$ with $\theta^\dagger_i$ denoting the minimiser of $\Phi_i(\theta)=\frac12 \|\tilde A_i \theta - \tilde y_i\|^2$.
\end{lemma}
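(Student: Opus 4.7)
The setting of the lemma should be understood as the deterministic regime where each particle $j\in J$ is permanently assigned to a fixed subsample index $i(j)\in I$, so that the batch-subsampling TEKI dynamics reduces to the $\Nens$ coupled linear, time-inhomogeneous ODEs
\begin{equation*}
\dot{e}^{(j)}(t) \;=\; -\widehat{C}_t\, \tilde A_{i(j)}^{\top}\tilde A_{i(j)}\,e^{(j)}(t),\qquad e^{(j)}(t):=\theta^{(j)}(t) - \theta^{\dagger}_{i(j)},
\end{equation*}
where $\theta^{\dagger}_{i(j)} = (\tilde A_{i(j)}^{\top}\tilde A_{i(j)})^{-1}\tilde A_{i(j)}^{\top}\tilde y_{i(j)}$ is the unique minimiser of $\Phi_{i(j)}$. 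Each matrix $\tilde A_{i(j)}^{\top}\tilde A_{i(j)}$ is symmetric positive definite because of the Tikhonov block $\tfrac{\alpha}{\Nsub}C_0$, so the system is a preconditioned gradient flow with a potentially degenerating preconditioner, and the whole question reduces to controlling $\widehat{C}_t$ from below.

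The plan is to first establish an a~priori convergence of the particles to their targets via a soft Lyapunov argument, and then to bootstrap this into exponential decay. I would start with the functional $V(t)=\sum_{j\in J}\Phi^{\mbox{\rm reg}}_{i(j)}(\theta^{(j)}(t))$, whose derivative $-\sum_j\langle g^{(j)},\widehat{C}_t g^{(j)}\rangle\le 0$ (with $g^{(j)}:=\tilde A_{i(j)}^{\top}\tilde A_{i(j)}\,e^{(j)}$) combined with coercivity of each $\Phi^{\mbox{\rm reg}}_{i(j)}$ already yields boundedness of the orbits; the spanning hypothesis $\spann\{e^{(j)}(0)\}=X$ ensures $\mathcal E=X$, so the subspace property does not obstruct convergence to the $\theta^{\dagger}_{i(j)}$. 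A LaSalle-type invariance argument, together with the full-rank hypothesis on $\sum_{j}(\theta^{\dagger}_j-\bar\theta^{\dagger})(\theta^{\dagger}_j-\bar\theta^{\dagger})^{\top}$, then forces $\theta^{(j)}(t)\to\theta^{\dagger}_{i(j)}$ and consequently
\begin{equation*}
\widehat{C}_t \;\longrightarrow\; \widehat{C}^{\star}\;:=\;\tfrac{1}{\Nens-1}\sum_{k\in J}(\theta^{\dagger}_{i(k)}-\bar\theta^{\dagger})\otimes(\theta^{\dagger}_{i(k)}-\bar\theta^{\dagger}),
\end{equation*}
where $\widehat{C}^{\star}$ has full rank by hypothesis. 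Continuity then provides some $T_0<\infty$ with $\widehat{C}_t \succeq \tfrac12\lambda_{\min}(\widehat{C}^{\star})\,I$ for all $t\geq T_0$. For $t\ge T_0$, I would apply the Lyapunov function $W^{(j)}(t):=\langle e^{(j)}(t),\tilde A_{i(j)}^{\top}\tilde A_{i(j)}\,e^{(j)}(t)\rangle$, for which
\begin{equation*}
\dot W^{(j)}(t)=-2\langle g^{(j)},\widehat{C}_t\, g^{(j)}\rangle \le -\lambda_{\min}(\widehat{C}^{\star})\,\|g^{(j)}\|^{2} \le -\lambda_{\min}(\widehat{C}^{\star})\,\lambda_{\min}(\tilde A_{i(j)}^{\top}\tilde A_{i(j)})\,W^{(j)}(t),
\end{equation*}
and Gr\"onwall delivers exponential decay of $W^{(j)}$, whence of $\|e^{(j)}(t)\|$ by the norm equivalence induced by $\tilde A_{i(j)}^{\top}\tilde A_{i(j)}\succ 0$.

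The hard part will be the bootstrap: the LaSalle step requires ruling out a hypothetical collapse $\lambda_{\min}(\widehat{C}_t)\to 0$ without already knowing a lower bound on $\widehat{C}_t$. The key heuristic, which needs to be made quantitative, is that if the centered ensemble $\theta^{(j)}-\bar\theta$ collapsed along some direction $u\in X$, then the drift terms $-\widehat{C}_t\tilde A_{i(j)}^{\top}\tilde A_{i(j)}(\bar\theta_\infty-\theta^{\dagger}_{i(j)})$ would differ across $j$ in the $u$-component as soon as the targets $\{\theta^{\dagger}_{i(j)}\}_{j}$ span a space of full rank in the direction $u$ (which is precisely the hypothesis), and so the particles cannot remain collapsed along $u$. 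Making this obstruction contradict an assumed decay $\lambda_{\min}(\widehat{C}_t)\to 0$ is where I expect the bulk of the technical effort to lie; once achieved, the remainder of the proof is a routine linear ODE estimate.
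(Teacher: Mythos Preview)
Your overall architecture matches the paper: first obtain qualitative convergence $\theta^{(j)}\to\theta^\dagger_{i(j)}$, deduce that $\widehat{C}_t\to\widehat{C}^\star$ of full rank, extract a uniform lower bound on $\lambda_{\min}(\widehat{C}_t)$, and finish by Gr\"onwall on a Lyapunov function equivalent to $\|e^{(j)}\|^2$. The final step is essentially identical to the paper's.

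The gap is exactly where you suspect it is, and your proposed mechanism for closing it is not the one that works. Your heuristic argues that if the centered ensemble collapsed along some unit vector $u$, the drifts $-\widehat{C}_t\,\tilde A_{i(j)}^{\top}\tilde A_{i(j)}(\bar\theta-\theta^\dagger_{i(j)})$ would differ across $j$ in the $u$-component because the targets spread out. But if $u$ is (near) the kernel of $\widehat{C}_t$, then $\langle u,\widehat{C}_t v\rangle=\langle\widehat{C}_t u,v\rangle\approx 0$ for every $v$, so the drift has essentially no $u$-component for any particle; the different targets cannot by themselves re-inflate a collapsed direction through a degenerate preconditioner. So this line of argument does not close.

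The paper's device is different and more structural. Writing $e^{(j)}=\theta^{(j)}-\bar\theta$ (note: centered particles, not errors to the target), one computes
\[
\dot{\widehat{C}}_t \;=\; D(t)\,\widehat{C}_t + \widehat{C}_t\,D(t)^\top,\qquad D(t)=-\tfrac{1}{\Nens}\sum_j (g^{(j)}-\bar g)\otimes e^{(j)},
\]
a linear Lyapunov-type ODE. Its solution has the form $\widehat{C}_t=\Psi(t)\widehat{C}_0\Psi(t)^\top$ with $\Psi$ the (invertible) state transition matrix of $\dot x=D(t)x$, so the rank of $\widehat{C}_t$ is constant in $t$. Hence $\{e^{(k)}(t)\}_k$ spans $X$ for all finite $t$, which makes the Lyapunov derivative $\tfrac12\tfrac{d}{dt}\|g^{(j)}\|^2_{(\tilde A_{i(j)}^\top\tilde A_{i(j)})^{-1}}=-\tfrac{1}{\Nens}\sum_k\langle e^{(k)},g^{(j)}\rangle^2$ strictly negative whenever $g^{(j)}\neq 0$. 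This gives $g^{(j)}\to 0$ without any LaSalle subtlety, and only \emph{then} does the full-rank hypothesis on the target spread enter, to guarantee that the limiting covariance $\widehat{C}^\star$ is nondegenerate and hence that $\lambda_{\min}(\widehat{C}_t)$ is uniformly bounded below. Once you have this rank-preservation step, the remainder of your write-up goes through unchanged.
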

\begin{proof} 
Please see \ref{thm:Expconv_app}.

\end{proof}

The assumption on the ensemble spread being a generating set of the parameter space $X$ is rather restrictive and usually not satisfied in practice. Generalisation of the result is straightforward when working in the coordinate system of the linear subspace spanned by the initial ensemble using the projection $P_{\mathcal{E}}$.

The batch subsampling case leads to exponential convergence rates for fixed data, however, as the minimum eigenvalue depends on the inital data, the convergence result cannot be readily applied. We modify the process such that we can control the minimum eigenvalue similar to the variance inflation technique above. However, instead of considering a static lower bound, we now allow the minimum eigenvalue to decrease at a certain rate.

\begin{theorem}[Batch subsampling TEKI with diminishing variance inflation]
 Let \\ $(\theta^{(j)}(t))_{t \geq 0, j \in J}$ satisfy 
\begin{align} \label{EKI_subsampl_batch_dimvi}
    \frac{\mathrm{d} \theta^{(j)}(t)}{\mathrm{d}t} &= - (\widehat{C}_t + \frac{\alpha_{\rm vi}}{1+t} C_{\rm vi}) D_\theta \Phi^{\mbox{\rm reg}}_{\bsi(t;j)}(\theta^{(j)}(t))\qquad (j \in J) \\ \theta(0) &= \theta_0. \notag
\end{align}
with index process $(\bsi(t;j))_{t \geq 0, j\in{1,\ldots,\Nens}})$ and
\begin{align*}
\Phi^{\mbox{\rm reg}}_{\bsi(t;j)}(\theta^{(j)}(t))=\frac 12 \|\tilde y_{\bsi(t;j)}-\tilde A_{\bsi(t;j)}\theta\|^2, \quad \tilde A_{\bsi(t;j)}=\begin{pmatrix}A_{\bsi(t;j)} \\ \left(\frac{\alpha}{\Nsub} C_0\right)^{\frac12}\end{pmatrix}, \quad \tilde y=\begin{pmatrix}y_{\bsi(t;j)}^\dagger \\ 0\end{pmatrix},    
\end{align*}
with $\alpha, \alpha_{vi}>0, \widehat{C}_t$ denoting the empirical covariance matrix of the particles $\theta^{(j)}(t)$, and $C_0,C_{\rm vi}$ being symmetric positive definite matrices. The tuple $(\bsi(t;j), \theta(t))_{t \geq 0}$ denotes the bacth subsampling TEKI solution with variance inflation.
 Then, $$
\lim_{t \rightarrow \infty}\mathrm{d}_{\rm W}\left(\delta(\cdot - \theta^*), \mathbb{P}(\theta^{(j)}(t) \in \cdot | \theta_0, i_0)\right) = 0 \qquad (j \in J).$$
\end{theorem}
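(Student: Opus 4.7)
The plan is to invoke Theorem~\ref{thm_gen_Latz21} for the joint batch-subsampled vector field on $X^\Nens$, indexed by the product state $\hat{i} = (i_1,\ldots,i_\Nens) \in I^\Nens$, where the coordinate processes $(\bsi(t;j))_{t \geq 0}$ are independent CTMPs with the common transition rate matrix \eqref{eq_transition_rate_mat}. Writing $\Theta = (\theta^{(j)})_{j \in J}$, the $j$-th block of the vector field $\mathbf{F}_{\hat i}(\Theta,t)$ is
$$\left( \widehat{C}(\Theta) + \frac{\alpha_{\rm vi}}{1+t} C_{\rm vi} \right) \tilde A_{i_j}^\top (\tilde A_{i_j} \theta^{(j)} - \tilde y_{i_j}),$$
which is polynomial in $\Theta$ and smooth in $t$ on $[0,\infty)$, so Assumption~\ref{Assum_conv}(i) is immediate.

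For Assumption~\ref{Assum_conv}(ii), the starting point is the time-dependent PSD bound $\widehat{C}(\Theta) + \frac{\alpha_{\rm vi}}{1+t} C_{\rm vi} \succeq \frac{\alpha_{\rm vi}}{1+t} \lambda_{\min}(C_{\rm vi})\,\mathrm{Id}$, which holds uniformly in $\Theta$ since $\widehat{C}(\Theta)$ is positive semi-definite. Combining this with the Tikhonov lower bound $\min_{i \in I} \lambda_{\min}(\tilde A_i^\top \tilde A_i) > 0$ and repeating the block-wise construction from Lemma~\ref{lemma:convex_vi_ss_app} for each particle would produce the candidate
$$h(t) = \frac{\alpha_{\rm vi}}{1+t}\,\lambda_{\min}(C_{\rm vi}) \min_{i \in I} \lambda_{\min}(\tilde A_i^\top \tilde A_i).$$
Since $\int_0^\infty (1+t)^{-1}\mathrm{d}t = \infty$, one has $\int_0^\infty h(t)\mathrm{d}t = \infty$, and Theorem~\ref{thm_gen_Latz21} delivers the claimed Wasserstein convergence for each component $\theta^{(j)}$.

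The main obstacle I expect is essentially the one already present in Lemma~\ref{lemma:convex_vi_ss_app}: the preconditioner and $\tilde A_{i_j}^\top \tilde A_{i_j}$ do not commute, so one cannot simply multiply smallest eigenvalues to lower bound the quadratic form $\langle x, \tilde C_t \tilde A_i^\top \tilde A_i x \rangle$. My plan would be to mirror the static-inflation strategy, either by symmetrising via a $C_{\rm vi}$-weighted inner product or by exploiting the block structure of $\tilde A_i^\top \tilde A_i$ coming from the regularisation, and then to absorb the cross-particle coupling in $\widehat{C}(\Theta) - \widehat{C}(\Theta')$ along the flow. The only genuinely new element relative to Theorem~\ref{thm:ssTEKI} is carrying the $1/(1+t)$ factor through these manipulations; it merely converts the constant $h$ of the static-inflation case into a logarithmically divergent one, which is the slowest decay still compatible with Assumption~\ref{Assum_conv}(ii).
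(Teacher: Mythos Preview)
Your proposal is correct and follows essentially the same route as the paper: verify Assumption~\ref{Assum_conv} via Lemma~\ref{lemma:convex_vi_ss_app}, use the diminishing inflation to obtain $h(t)=\frac{\alpha_{\rm vi}}{1+t}\lambda_{\min}(C_{\rm vi})\min_i \lambda_{\min}(\tilde A_i^\top \tilde A_i)$, check $\int_0^\infty h(t)\,\mathrm dt=\infty$, and absorb the cross-covariance term $\widehat C(\Theta)-\widehat C(\Theta')$ as a higher-order correction along the flow. The paper's proof is in fact terser than yours---it simply points back to Theorem~\ref{thm:ssTEKI} and records the new $h(t)$---so your sketch is, if anything, more explicit about the two technical points (non-commutativity of the preconditioner with $\tilde A_i^\top\tilde A_i$, and the coupling term) that the paper handles inside Lemma~\ref{lemma:convex_vi_ss_app}.
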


\begin{proof}
The proof follows the same lines as the proof of Theorem \ref{thm:ssTEKI}. The only difference being that we have here $h(t)=\frac{\alpha_{vi}}{t}\lambda_{\min}(C_{vi})\min_{i \in\{1,...,\Nsub\}}\lambda_{\min}(A_{\bsi}^TA_{\bsi})$, The diminishing rate however is slow enough to obtain $\int_0^\infty h(t)\mathrm{d}t=\infty$. 
\end{proof}

We now move on to proving the convergence of the subsampled EKI. We start by defining an auxiliary EKI subsampling process. Let $\varepsilon \in (0,1)$ and

\begin{align*} 
    \frac{\mathrm{d} \theta^{(j,\varepsilon)}(t)}{\mathrm{d}t} &= - \widehat{C}_t D_\theta \Phi_{{\bsi'}(t,\varepsilon;j)}(\theta^{(j,\varepsilon)}(t))\qquad (j \in J) 
    \\ \theta^{(j,\varepsilon)}(0) &= \theta_0^{(j)}\notag \qquad (j \in J), 
\end{align*}
where
$({\bsi'}(t, \varepsilon))_{t \geq 0}$ is the CTMP on $\hat{I}$ with transition rate matrix $B(t) := A(t \mathbf{1}[t \in [0,1/\varepsilon]] + 1/\varepsilon \mathbf{1}[t \in (1/\varepsilon, \infty)]).$

\begin{proposition} \label{prop:ergodicity_batch}
Let $\varepsilon > 0$. Then, the process $(\theta^{(\cdot,\varepsilon)}(t),\xi^\varepsilon(t), {\bsi'}(t, \xi))_{t \geq 0}$ has a unique stationary measure $\mu_\varepsilon$. Moreover, for every $\theta_0 \in X^{\Nens}$ and $i_0 \in \hat{I}$ there are $c, c' > 0$, with 
$$
\mathrm{d}_{\rm W}\left(\mu_\varepsilon, \mathbb{P}((\theta^{(\cdot,\varepsilon)}(t),\xi^\varepsilon(t), {\bsi'}(t, \xi)) \in \cdot | \theta(0) = \theta_0, \bsi'(t) = i_0) \right) \rightarrow 0 \qquad (t \rightarrow \infty).
$$
\end{proposition}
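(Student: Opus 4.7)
The crucial structural observation I start from is that $B(t)$ coincides with the constant rate matrix $A(1/\varepsilon)$ for all $t > 1/\varepsilon$. Consequently, on the time window $[1/\varepsilon,\infty)$ the joint process $(\theta^{(\cdot,\varepsilon)}(t),\xi^\varepsilon(t),\bsi'(t,\varepsilon))$ is a time-homogeneous piecewise-deterministic Markov process on $X^{\Nens}\times[0,\infty)\times\hat{I}$: between jumps the ensemble flows under the smooth vector field $-\widehat{C}_t\, D_\theta \Phi_{\bsi'(t)}$, and at rate $\Nsub/((\Nsub-1)\eta(1/\varepsilon))$ the index resamples uniformly on $\hat{I}\setminus\{\bsi'\}$. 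Since the dynamics on the compact initial window $[0,1/\varepsilon]$ only redistributes the law of the state at time $1/\varepsilon$, it suffices to establish ergodicity of this PDMP.

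The plan is then to apply a Harris-type ergodic theorem for switched systems in the spirit of Cloez--Hairer or Benaim--Le Borgne--Malrieu--Zitt. Three ingredients are required: (a) a Foster--Lyapunov function $V\colon X^{\Nens}\to[0,\infty)$ with uniform negative drift outside a compact set under every sub-flow, (b) irreducibility of the switching kernel, and (c) a local minorisation of the joint transition kernel on $V$-sublevel sets. For (a), my natural candidate is $V(\theta)=\sum_{j\in J}\|\theta^{(j)}-\theta^*\|^2$; since every $\Phi^{\rm reg}_i$ is strongly convex thanks to the Tikhonov term $(\alpha/\Nsub)C_0$ and $\widehat{C}_t$ is positive semi-definite, a Gr\"onwall-type estimate analogous to Lemma~\ref{lemma:convex_vi_ss_app} will yield a one-sided drift along each sub-flow. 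For (b), irreducibility is immediate because $A(1/\varepsilon)$ has strictly positive off-diagonal entries on the finite state space $\hat{I}$. For (c), driving the index process along a prescribed sequence of states for prescribed holding times has positive probability, and the smoothness of the composed flows produces an open set on which the post-jump law admits a positive density, delivering the minorisation.

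The hard part will be condition (a) in the absence of explicit variance inflation: the preconditioner $\widehat{C}_t$ may degenerate along an orbit, so the instantaneous contraction rate can vanish. To circumvent this I would exploit the time-homogeneity of $\bsi'$ after $t = 1/\varepsilon$: since $\bsi'$ visits each index with asymptotic frequency $1/\Nsub$, the effectively averaged gradient driving the ensemble is that of $\Phi^{\rm reg} = \sum_i \Phi^{\rm reg}_i$, which is strongly convex even when individual preconditioned sub-flows are not. Turning this heuristic into a quantitative drift bound will require combining piecewise Lyapunov estimates on intervals of constant index with an averaging argument over the stationary distribution of $\bsi'$, in the style of the continuous-time SGD analysis of \cite{Latz2021,Jin2021}. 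Once (a)--(c) are in hand, the Hairer--Mattingly--Scheutzow framework for geometric ergodicity of Markov semigroups in Wasserstein distance delivers both the existence and uniqueness of $\mu_\varepsilon$ and the claimed exponential convergence in $\mathrm{d}_{\rm W}$.
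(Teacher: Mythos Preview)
Your route is genuinely different from the paper's. The paper does not invoke a Harris-type theorem at all: it uses a \emph{synchronous coupling} argument, exactly as in the proof of Lemma~\ref{aux_lemma}. Two copies of the ensemble are run with the \emph{same} realisation of the index process $(\bsi'(t,\varepsilon))_{t\ge 0}$ but different initial values, and one seeks a pathwise inequality
\[
-\langle \theta-\theta_*,\;\widehat{C}_t D_\theta\Phi_{i'(t,\varepsilon;\cdot)}(\theta)-\widehat{C}_t D_\theta\Phi_{i'(t,\varepsilon;\cdot)}(\theta_*)\rangle \le -h_{i'}(t)\|\theta-\theta_*\|^2.
\]
The function $h_{i'}$ is supplied by Lemma~\ref{lemma:convex_vi_ss_app}; crucially, the paper takes the auxiliary process to carry the \emph{diminishing variance inflation} term $\frac{\alpha_{\rm vi}}{1+t}C_{\rm vi}$, which yields $h_{i'}(t)=\frac{\alpha_{\rm vi}}{1+t}\lambda_{\min}(C_{\rm vi})\min_i\lambda_{\min}(A_i^\top A_i)$ and hence $\int_0^\infty h_{i'}=\infty$. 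Gr\"onwall then gives Wasserstein contraction and the Cauchy-sequence argument of Lemma~\ref{aux_lemma} produces $\mu_\varepsilon$. Compactness and avoidance of the diagonal $X_{\rm diag}$ are handled separately via \cite{Benaim} and Lemma~\ref{thm:Expconv}.

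Your Harris/minorisation programme is more machinery than the paper needs, and its ``hard part'' is precisely the step the paper sidesteps. You correctly identify that without variance inflation the instantaneous drift of $V(\theta)=\sum_j\|\theta^{(j)}-\theta^*\|^2$ along a single sub-flow need not be negative (the minimiser of $\Phi_i^{\rm reg}$ is $\theta_i^\dagger\neq\theta^*$, and $\widehat{C}_t$ may degenerate). Your proposed remedy, an averaging argument over the stationary law of $\bsi'$, is only a heuristic: turning ergodic averaging of the index into a uniform-in-$\theta$ Foster--Lyapunov bound for a PDMP with state-dependent, possibly degenerate drift is not covered by the Cloez--Hairer or BLMZ results you cite and would require substantial new work. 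The paper avoids this entirely by keeping the $(1+t)^{-1}$ inflation in the auxiliary process, which guarantees a deterministic lower bound on the preconditioner and makes the synchronous-coupling contraction immediate.
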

\begin{proof}
1. We note that for any initial value $\theta_0 \in X^{\Nens}$  there is a compact set $M \subseteq X^{\Nens}, M \ni \theta_0$ from which the process $(\theta^{(j,\varepsilon)}(t), {\bsi'}(t, \varepsilon))_{t \geq 0}$ cannot escape. See \cite{Benaim} for details.  Moreover, we note that under the hypothesis of Theorem~\ref{thm:Expconv}, we can assure that $\theta^{(\cdot,\varepsilon)}(t) \not\in X_{\rm diag} := \{(\theta_1,...,\theta_{\Nens}) \in X^{\Nens} : \exists j, j' \in  J, \theta_j = \theta_{j'}\}$

2. We now show that two coupled processes $(\theta^{(j,\varepsilon)}(t), \bsi'(t, \varepsilon))_{t \geq 0}$ and $(\theta_*^{(j,\varepsilon)}(t), \bsi'(t, \varepsilon))_{t \geq 0}$ starting at different initial points $\theta_0, \theta_{0,*}$ contract in the Wasserstein-2 distance.
Let $(i'(t,\varepsilon; \cdot))_{t \geq 0}$ be a realisation of ${\bsi'}(t, \xi; \cdot))_{t \geq 0}$. Then, again we try to find  a continuous function $h_{i'}: [0, \infty) \rightarrow (0, \infty)$ with
$$
-\langle \theta -  \theta_*, \widehat{C}_t D_\theta \Phi_{{i'}(t,\varepsilon;\cdot)}( \theta) - \widehat{C}_t D_\theta \Phi_{{i'}(t,\varepsilon;\cdot)}( \theta_*) \rangle \leq -h_{i'}(t) \| \theta - \theta_*\|^2.$$
Lemma \ref{lemma:convex_vi_ss_app} gives us 
$h_{i'}(t)=\frac{\alpha_{vi}}{1+t}\lambda_{\min}(C_{vi})\min_{i \in\{1,...,\Nsub\}}\lambda_{\min}(A_{\bsi}^TA_{\bsi}).$\\
Note that we are using variance inflation, however one that is diminishing at rate $t^{-1}$. This is slow enough so that we obtain $\int_0^\infty h_{i'}(t) \mathrm{d}t=\infty$.

\end{proof}

In Proposition \ref{prop:ergodicity_batch}, we showed that the auxiliary process $(\theta^{\varepsilon}(t)))_{t \geq 0}$ is ergodic  ($\varepsilon > 0$) and converges to a stationary measure. In the following, we will show that $(\theta^{\varepsilon}(t)))_{t \geq 0} \rightarrow (\theta(t)))_{t \geq 0}$ as $\varepsilon \rightarrow 0$ and then also that $\theta_t \rightarrow P_Y\theta^\dagger$ as $t \rightarrow \infty$.
\begin{theorem}
Under the assumptions of Proposition~\ref{prop:ergodicity_batch}, we have
$$\mathrm{d}_{\rm W}(\delta(\cdot-P_Y\theta^\dagger), \mathbb{P}(\theta(t) \in \cdot)) \rightarrow 0 \qquad (t \rightarrow \infty).$$
\end{theorem}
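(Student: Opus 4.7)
The plan is to prove convergence by a triangle inequality decomposition that exploits the auxiliary process. Letting $\mu(t)$, $\mu^\varepsilon(t)$ denote the laws of $\theta(t)$ and $\theta^{(\cdot,\varepsilon)}(t)$ respectively, and $\mu_\varepsilon$ the stationary measure from Proposition~\ref{prop:ergodicity_batch}, we write
\[
\mathrm{d}_{\rm W}(\delta(\cdot-P_Y\theta^\dagger), \mu(t)) \leq \mathrm{d}_{\rm W}(\mu(t), \mu^\varepsilon(t)) + \mathrm{d}_{\rm W}(\mu^\varepsilon(t), \mu_\varepsilon) + \mathrm{d}_{\rm W}(\mu_\varepsilon, \delta(\cdot-P_Y\theta^\dagger))
\]
and bound each summand separately by a suitable choice of $\varepsilon$ as a function of $t$.

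The first summand vanishes by a synchronous coupling argument: since the rate matrix $B(t)$ of the auxiliary CTMP $\bsi'$ coincides with $A(t)$ on $[0, 1/\varepsilon]$, one can couple $\theta$ and $\theta^{(\cdot,\varepsilon)}$ pathwise so that they agree exactly for $t \leq 1/\varepsilon$, whence $\mathrm{d}_{\rm W}(\mu(t), \mu^\varepsilon(t)) = 0$ in that range. The second summand decays to zero as $t \to \infty$ for each fixed $\varepsilon$ by Proposition~\ref{prop:ergodicity_batch}. Given $\delta > 0$, I would first choose $\varepsilon_* > 0$ small enough that the third summand is at most $\delta/2$, then for this $\varepsilon_*$ choose $T_*$ with the second summand below $\delta/2$ for $t \geq T_*$; for $t \in [T_*, 1/\varepsilon_*]$ the triangle inequality then yields the bound. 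To cover all sufficiently large $t$, I would let $\varepsilon = \varepsilon(t) \to 0$ with $1/\varepsilon(t) \geq t$, and verify that the $\varepsilon$-dependent mixing time $T_\varepsilon$ of Proposition~\ref{prop:ergodicity_batch} grows slower than $1/\varepsilon$.

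The main obstacle is the third step: showing $\mathrm{d}_{\rm W}(\mu_\varepsilon, \delta(\cdot - P_Y\theta^\dagger)) \to 0$ as $\varepsilon \to 0$. Heuristically, as $\varepsilon \to 0$ the frozen learning rate $\eta(1/\varepsilon)$ vanishes and the CTMPs $\bsi'(t,\varepsilon; j)$ jump on a timescale much faster than the ensemble evolves, so the stochastic approximation averages out to the flow of $-\hPsi$, whose unique fixed point is $P_Y\theta^\dagger$. Formalising this at the level of the stationary measure, rather than along transient trajectories as in Theorem~\ref{thm_gen_Latz21}, is the technical crux: I would proceed either by a perturbation expansion of $\mu_\varepsilon$ around the Dirac $\delta(\cdot - P_Y\theta^\dagger)$, or, more directly, by combining the uniform contraction estimate underlying Lemma~\ref{lemma:convex_vi_ss_app} with a comparison between the auxiliary and averaged flows whose error is quantitatively controlled by $\eta(1/\varepsilon)$, so that the fixed point of the perturbed dynamics converges to that of the averaged one as $\varepsilon \to 0$.
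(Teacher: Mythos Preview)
Your proposal is correct and follows essentially the same route as the paper, which defers the proof entirely to \cite{Latz2021}; the argument there (and its adaptation in the paper's appendix for Theorem~\ref{thm_gen_Latz21}) is precisely the three-term triangle-inequality decomposition you describe, with Proposition~\ref{prop:ergodicity_batch} controlling the middle term and an averaging/stationary-measure limit controlling the third. Your treatment of the first term via an exact synchronous coupling on $[0,1/\varepsilon]$ is a minor variant of the uniform-in-$t$ bound $\alpha'(\varepsilon)$ used in Proposition~\ref{aux_prop}, and the diagonal choice $\varepsilon=\varepsilon(t)$ you propose is exactly the device needed to close the argument; the growth condition on the mixing time you flag is indeed the point where the quantitative contraction from Lemma~\ref{lemma:convex_vi_ss_app} enters.
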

\begin{proof}
The result follows from \cite{Latz2021}.
\end{proof}

\section{Numerical Experiments} \label{sec_NumExp}
We now test our methodology in two numerical experiments: first, we aim to estimate  the source term in a 1D parabolic PDE using measurements from its solution. Then, we estimate the log-diffusion coefficient in a 2D elliptic PDE, again using measurements of the solution.

\subsection{1D-Heat equation} \label{sec:firstexample}

In this experiment we consider a one-dimensional Heat equation, given by the following differential equation

\begin{alignat*}{2}
            \frac{\partial u(x,t)}{\partial t} - \frac{\partial^2 u(t,x)}{\partial x^2}  &= f(x) \qquad  &&(t > 0, x \in (0,1))\\
            u(0,x) &= 0   \qquad     &&(x \in (0,1))\\
   u(t,0),u(t,1) &= 0     \qquad     &&(t\geq 0).
\end{alignat*}

Our goal in this inverse problem is to estimate the unknown forcing $f$ from perturbed measurement data that we have obtained from the solution. 
Indeed, we define 
$A=\mathcal{O} \circ L^{-1}$, where $L=\frac{\mathrm{d}}{\mathrm{d} t}-\frac{\mathrm{d}^2}{\mathrm{d}^2 x}$, and $\mathcal{O}:H_0^1([0,1],\mathbb{R})\rightarrow \mathbb{R}^K, (p(\cdot)) \mapsto \mathcal{O}(p(\cdot))=\left(p(x_1),...,p(x_K)\right)^T$ is an equidistant observation operator on $[0,1]\times\mathbb{R}_{\geq 0}$ and $p\in H_0^1([0,1],\mathbb{R})$ is a solution operator of the PDE. The inverse problem is given by:
        $$u=Af+\eta,$$
where $\eta\sim\mathcal{N}(0,\Gamma)$, with $\Gamma=0.1^2 \mathrm{Id}_K$.\\
The forcing is assumed to be a Gaussian random field with zero mean and covariance $C(s,t)=\sigma^2\exp{(-\frac{|s-t|^2}{L_{sc}})}$, where $(s,t)\in[0,1]\times[0,1],\sigma^2=10$ and $L_{sc}=0.1$. We simulate the random field using a KL-expansion, which is truncated after $8$ terms, i.e. $f(x,\omega)=\sum_{i=1}^8 \lambda_i^{1/2}e_i(x)\xi_i(\omega)$, where $\lambda_i$ are the largest eigenvalues of $C(s,t), e_i(x)$ the corresponding eigenfunctions and $\xi_i$ standard normal distributed random variables. Furthermore, we use a spatial step size of $h = 0.01$, a time step size of
$\Delta = 0.05$ and a time horizon of $T=0.3$. Hence, we have $6$ time steps. The PDE-solution is then computed through the Crank-Nicolson method.\\

To solve the inverse problem we will consider the numerical computed solutions of $u(t,x)$ at each time time step as one subsample. Therefore we have $\Nsub=6$ many subsamples. Additionally the choice of the step size $h$, yields $N_{dim(X)}=99$ interior points that we seek to estimate. Furthermore, we have $\Nobs=6 \cdot 99=594$ many observations and use $\Nens=5$ particles. Our initial ensemble is assumed to have the same distribution of the Gaussian random field that we choose for the forcing $f$. We sample from the random field by making $\Nens$ independent draws from $f(x,\omega)$. Therefore, the generated solutions will lie in the subspace $f_0^\perp +\mathcal E$, where $\mathcal E$ is the linear span of the centered initial ensemble and $f_0^\perp=\bar f(0)-P_{\mathcal E} \bar f(0)$.
The best approximation in this space is given by the solution $f_{\mathcal E}^{\dagger}$ of the constrained optimisation problem \eqref{eqn:constrTEKI}, i.e. 
$$    \min_{c\in \mathbb R^{\Nens-1}} \frac12  \|\tilde A E c - (\tilde y- \tilde Af_0^\perp)\|^2,
$$
with $E$ denoting a basis of $\mathcal E$ and $\tilde A$ and $\tilde y$ correspond to the regularised versions of the respective variable. This solution can be computed analytically and is given by
$$c^\dagger_{\mathcal{E}}=\left((\tilde A E)^T(\tilde A E)\right)^{-1}(\tilde A E)^T(\tilde y- \tilde A\theta_0^\perp).$$

Thus the reference solution in the parameter space is given by
$$f_{\mathcal E}^{\dagger}=Ec^\dagger_{\mathcal{E}}+\theta_0^\perp.$$
\\

We simulate $N=32$ many runs and illustrate the mean absolute error of the runs in the parameter space as well as in the observation space for single-subsampling, batch-subsampling and compare it to the EKI. Lastly, we also illustrate the mean ensemble collapse for each particle. The ODE solutions of the EKI as well as our subsampling methods are computed using MATLABs \verb+ode45+ ODE solver.

\subsubsection{TEKI with variance inflation}
The first conducted experiment uses constant variance inflation of the magnitude $\alpha_{\rm vi}=0.01$ and regularisation of $\beta=10$. Furthermore, the learning rate decays at exponential speed, i.e. $\eta(t)=a \exp(-bt)$, with $a=0.01$ and $b=10$. We compute the solution up until $T=1$. With those parameter choices we obtain approx. $2 \cdot 1e5$ many data changes. We illustrate our results in semi-log plots, i.e. we only apply the log functions on to the $y$-axis.

\begin{figure}[h]
\centering
\captionsetup{width=.9\linewidth}
\includegraphics[scale=0.42, trim = 1cm 0cm 0cm 1.1cm,clip]{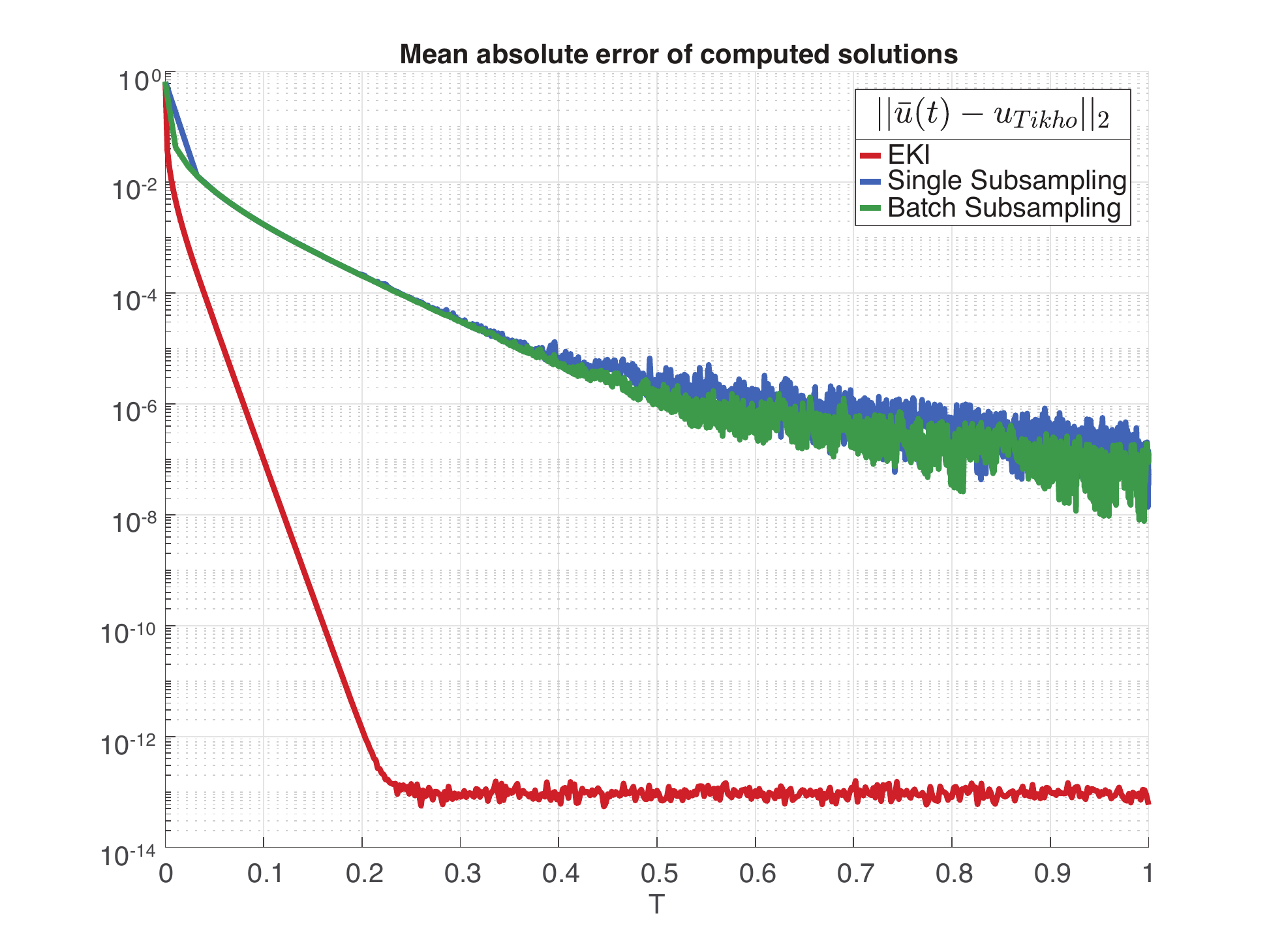}
\hspace*{-0.8cm}%
\includegraphics[scale=0.42, trim = 1.5cm 0cm 0cm 1cm,clip]{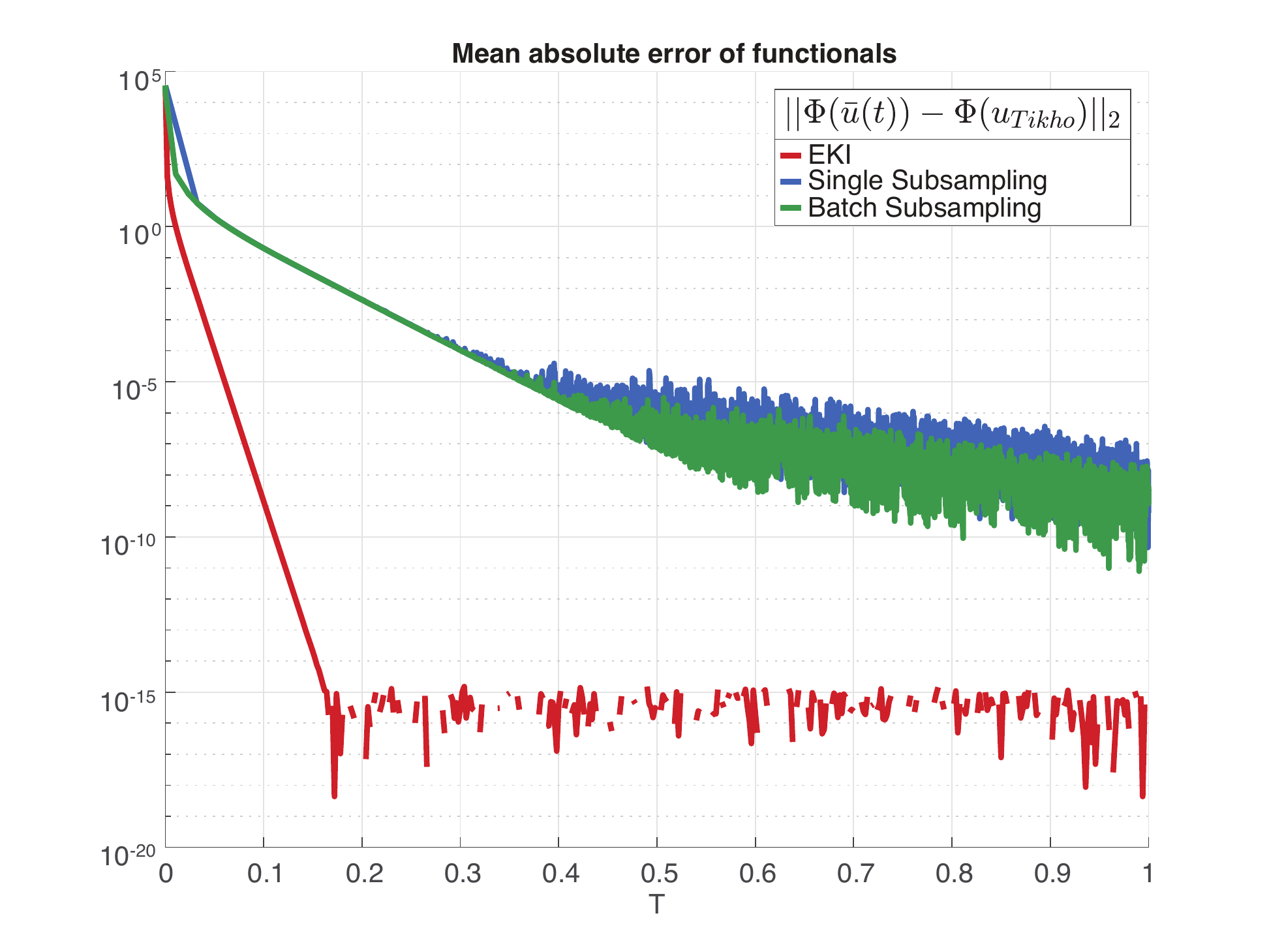}
\caption{Mean absolute errors of computed solutions in the parameter (left) and observation space (right). The red line illustrates the EKI, the blue line single subsampling and the green line batch subsampling.}
\label{fig:mean_err_vi_exp_decay}
\end{figure}

Figure \ref{fig:mean_err_vi_exp_decay} shows the mean relative error over all $N=32$ runs, w.r.t the Tikhonov solution. The left figure shows the mean error in the parameter space and the right figure in the observation space. We can see that both methods converge towards the true solution at exponential rate, due to the linear decay in the semi-log plots. They only differentiate themselves in the constants.

\begin{figure}[hp]
\centering
\captionsetup{width=.9\linewidth}
        \includegraphics[scale=0.29]{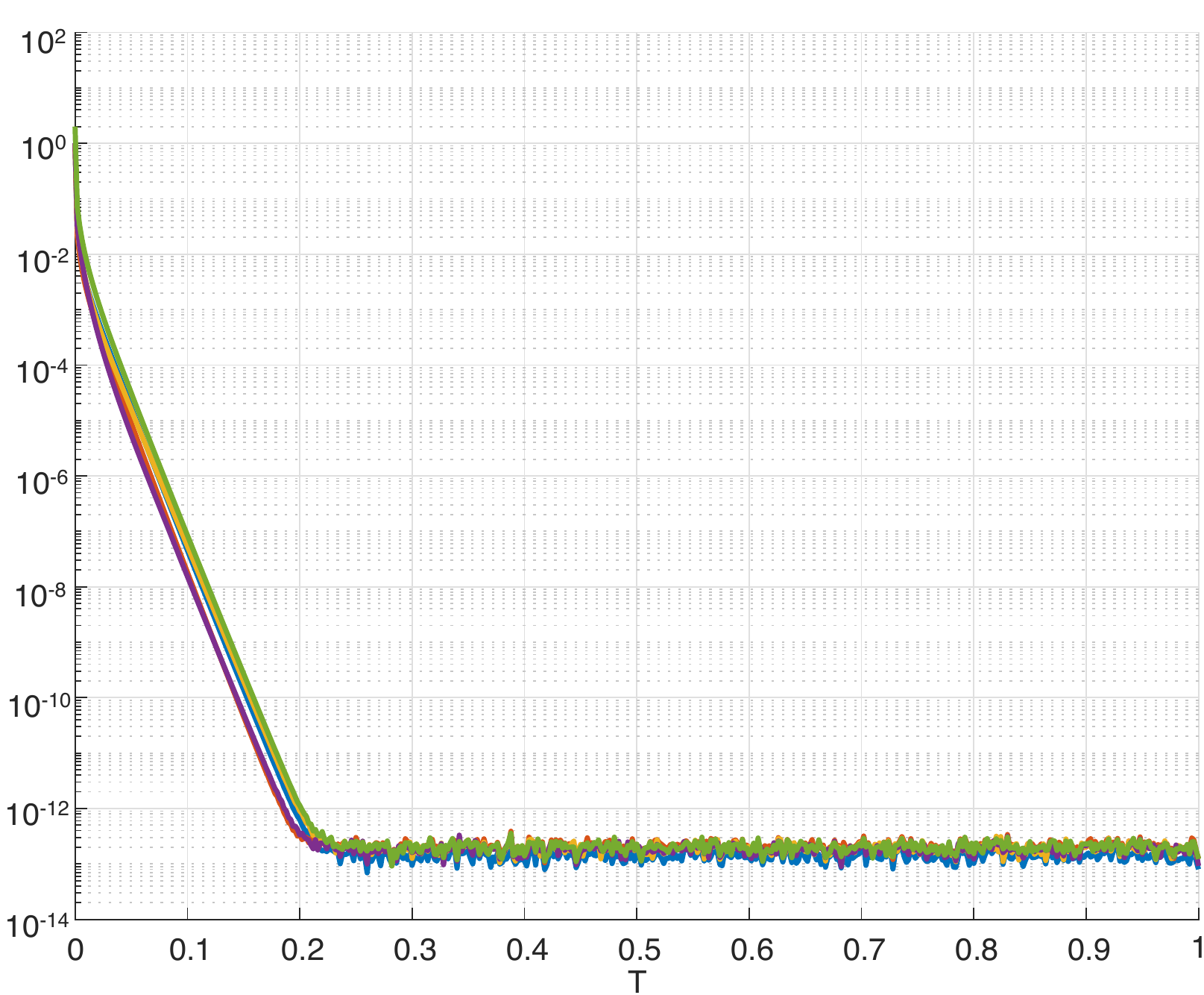}
        \includegraphics[scale=0.29]{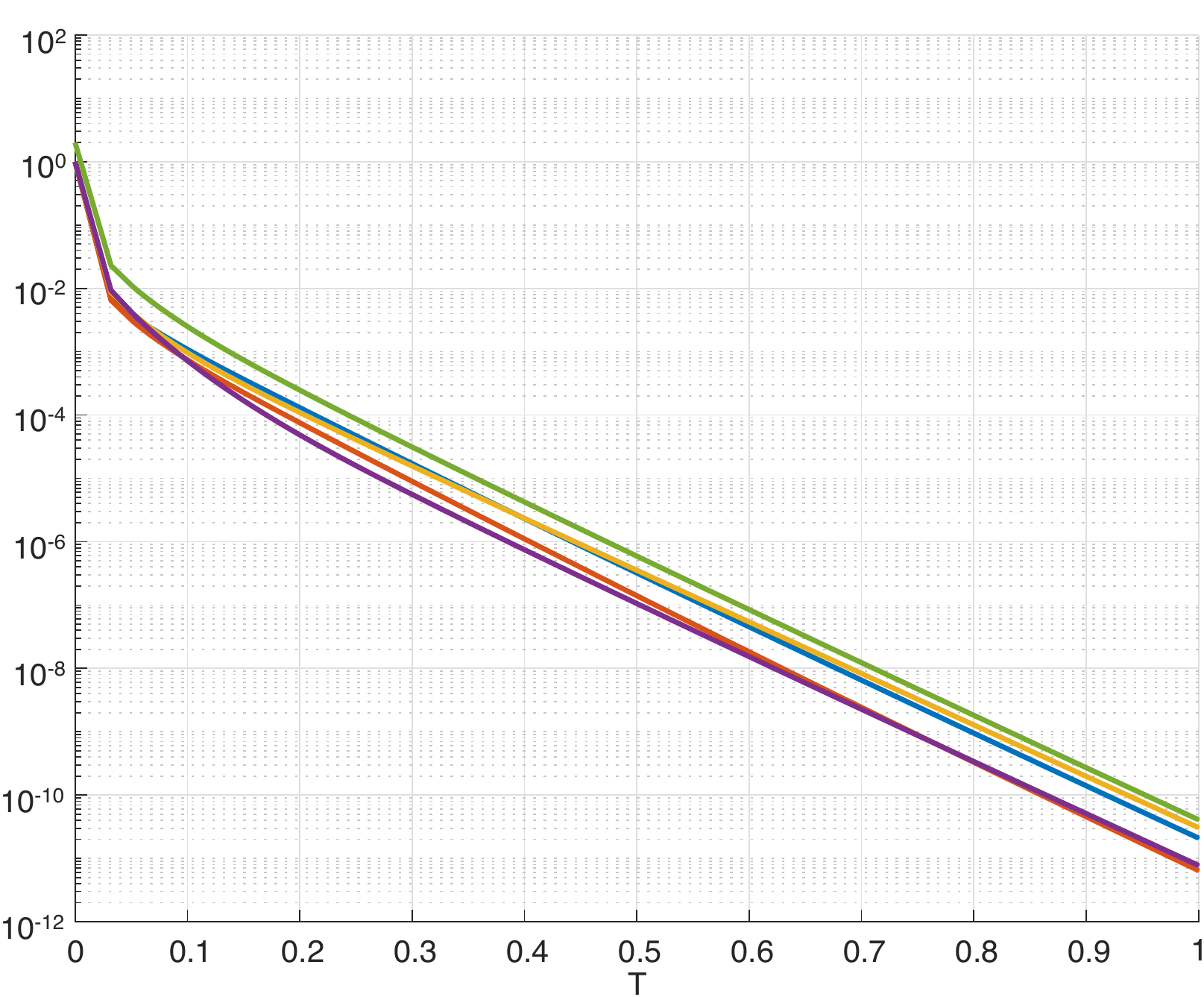}
        \includegraphics[scale=0.29]{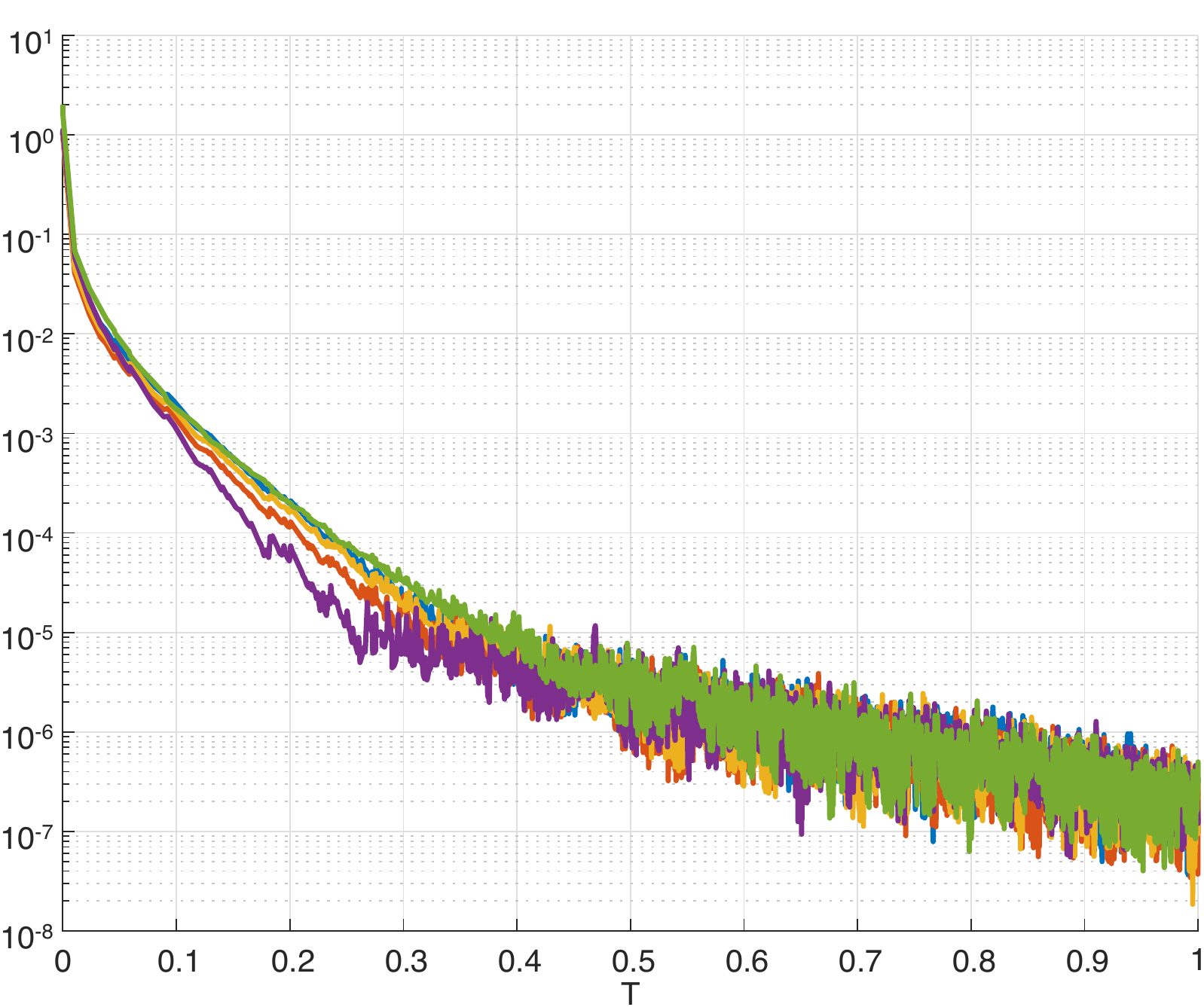}
\caption{Mean ensemble collapse of all $\Nens=5$ particles for the three methods. The different colors represent the ensemble collapse of one ensemble member respectively. Left figure: EKI; middle figure: single subsampling; right: batch subsampling.}.
\label{fig:ek_lin_vi_exp_decay}
\end{figure}

In Figure \ref{fig:ek_lin_vi_exp_decay} the mean ensemble collapse of the particles is illustrated. Again we can see that the collapse happens at an exponential rate. Interesting to note is that there seem to be more fluctuations in batch-subsampling than in single-subsampling. And that batch-subsampling is also a bit slower than single-subsampling(see different scaling on y-axis).

\subsubsection{TEKI with diminishing variance inflation}
In this simulation we consider diminishing variance inflation. We illustrate the results using the same parameters as chosen in the experiment with constant variance inflation. We let variance inflation vanish at a linear rate, i.e. the gradient flow is given by \eqref{EKI_subsampl_batch_dimvi} with $\alpha_{\rm vi}=0.01$. We use again an exponential decaying learning rate and we illustrate the results for EKI, single-subsampling and batch-subsampling.

\begin{figure}[h]
\centering
\captionsetup{width=.9\linewidth}
\includegraphics[scale=0.42, trim = 1cm 0cm 0cm 1.1cm,clip]{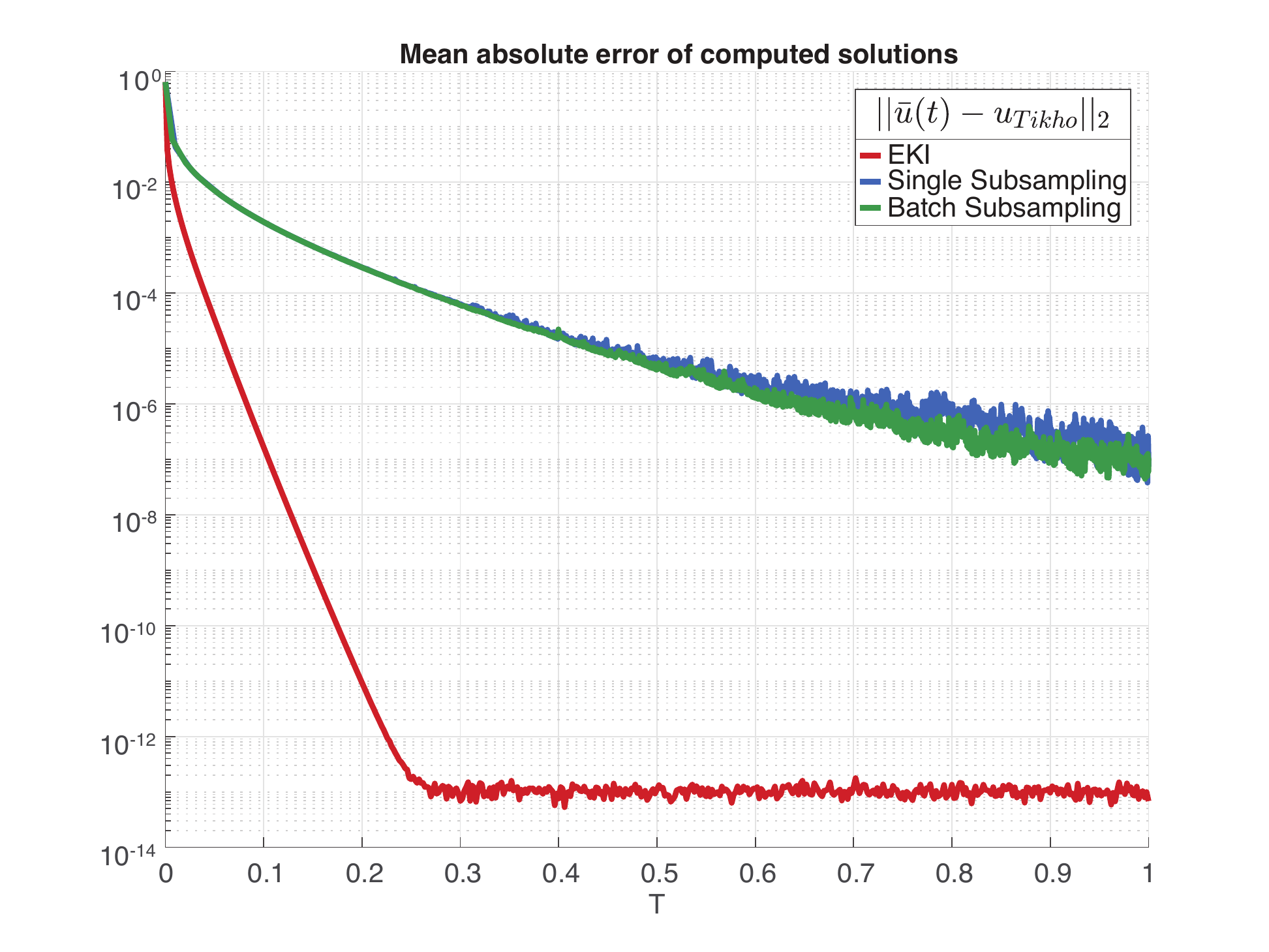}
\hspace*{-0.8cm}%
\includegraphics[scale=0.42, trim = 1.5cm 0cm 0cm 1cm,clip]{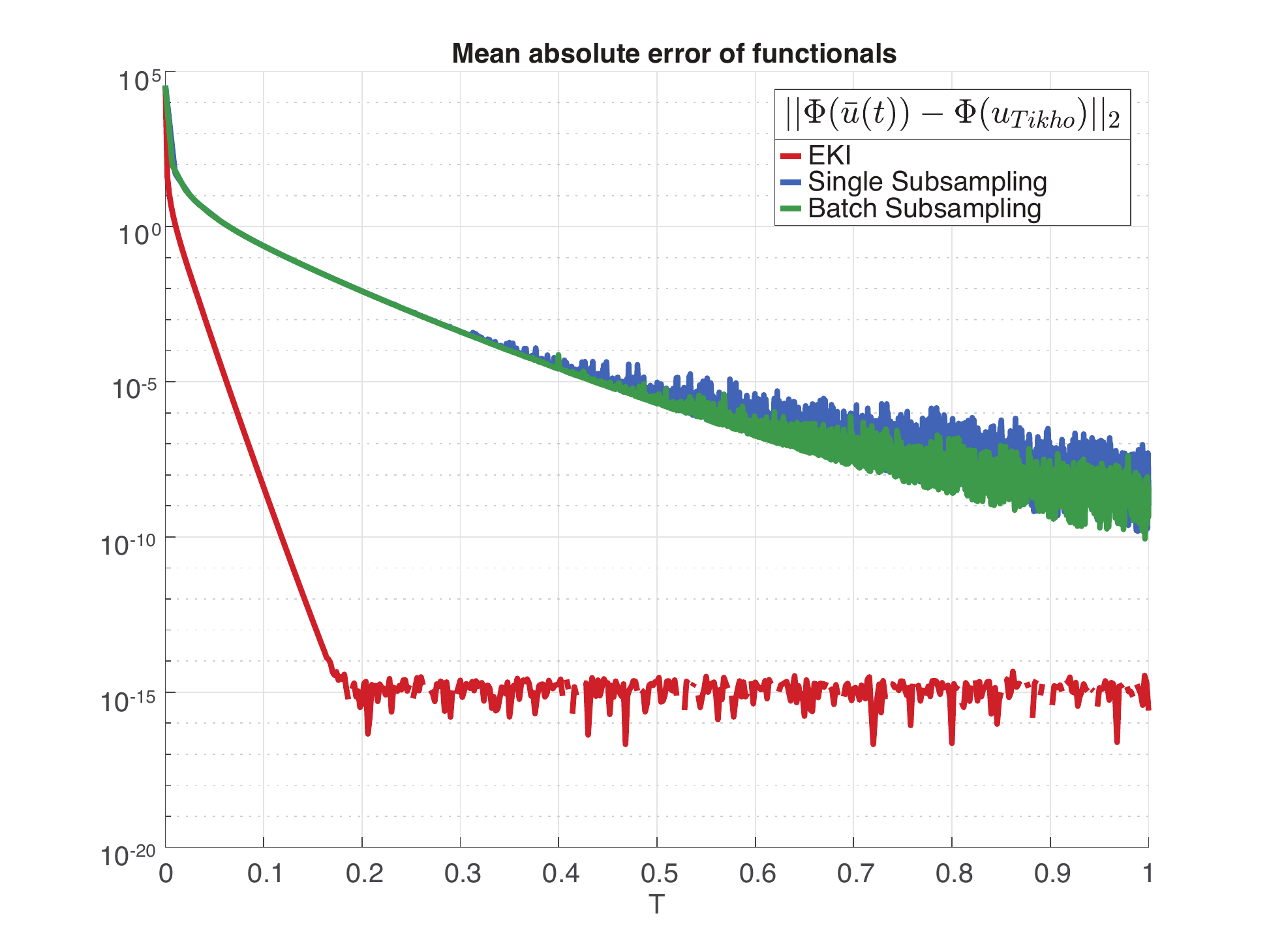}
\caption{Mean absolute errors of computed solutions in the parameter (left) and observation space (right). The red line illustrates the EKI, the blue line single-subsampling and the green line batch-subsampling.}
\label{fig:mean_err_vi_exp_decay_dim}
\end{figure} 

\begin{figure}[H]
\centering
\captionsetup{width=.9\linewidth}
        \includegraphics[scale=0.29]{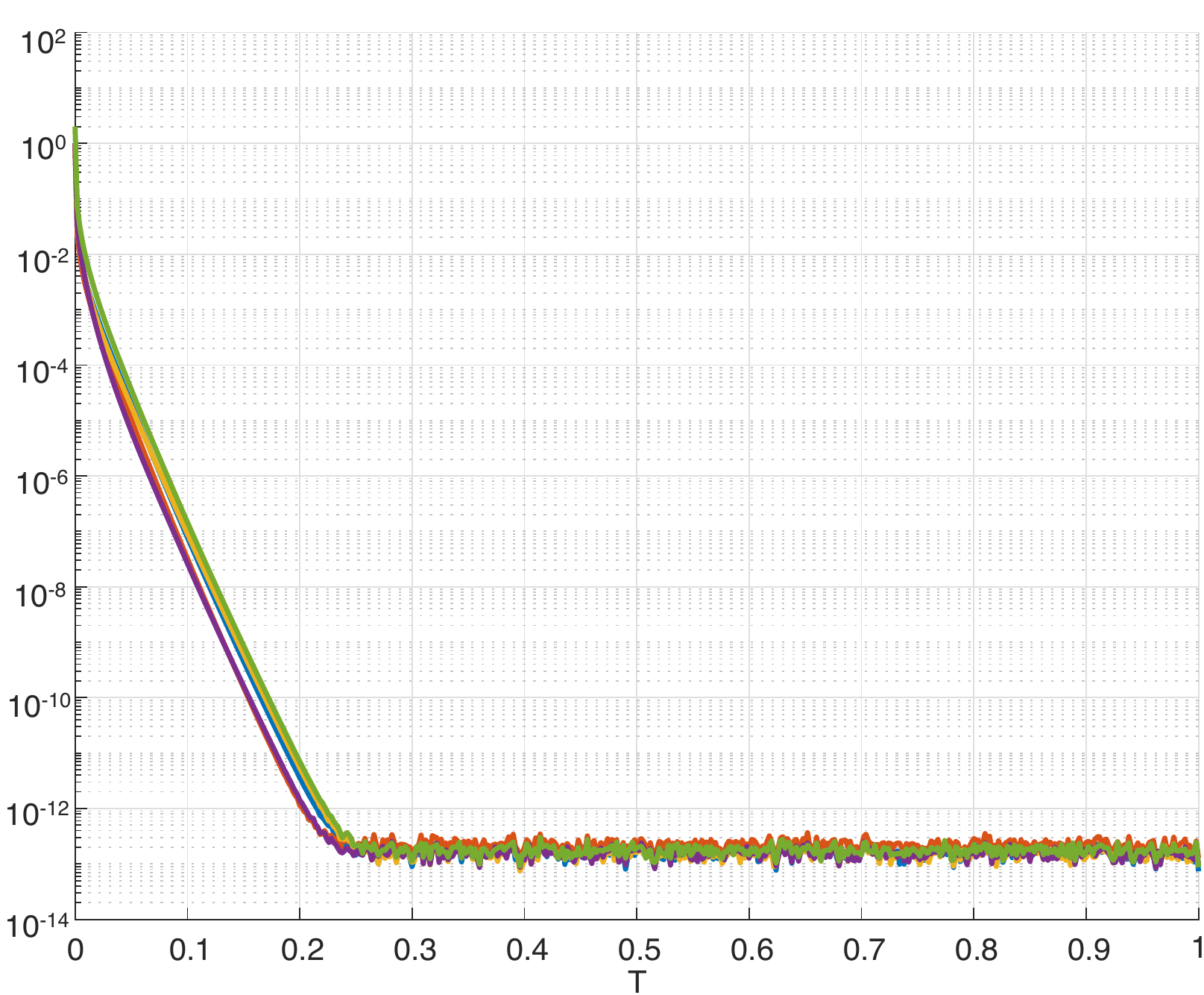}
        \includegraphics[scale=0.29]{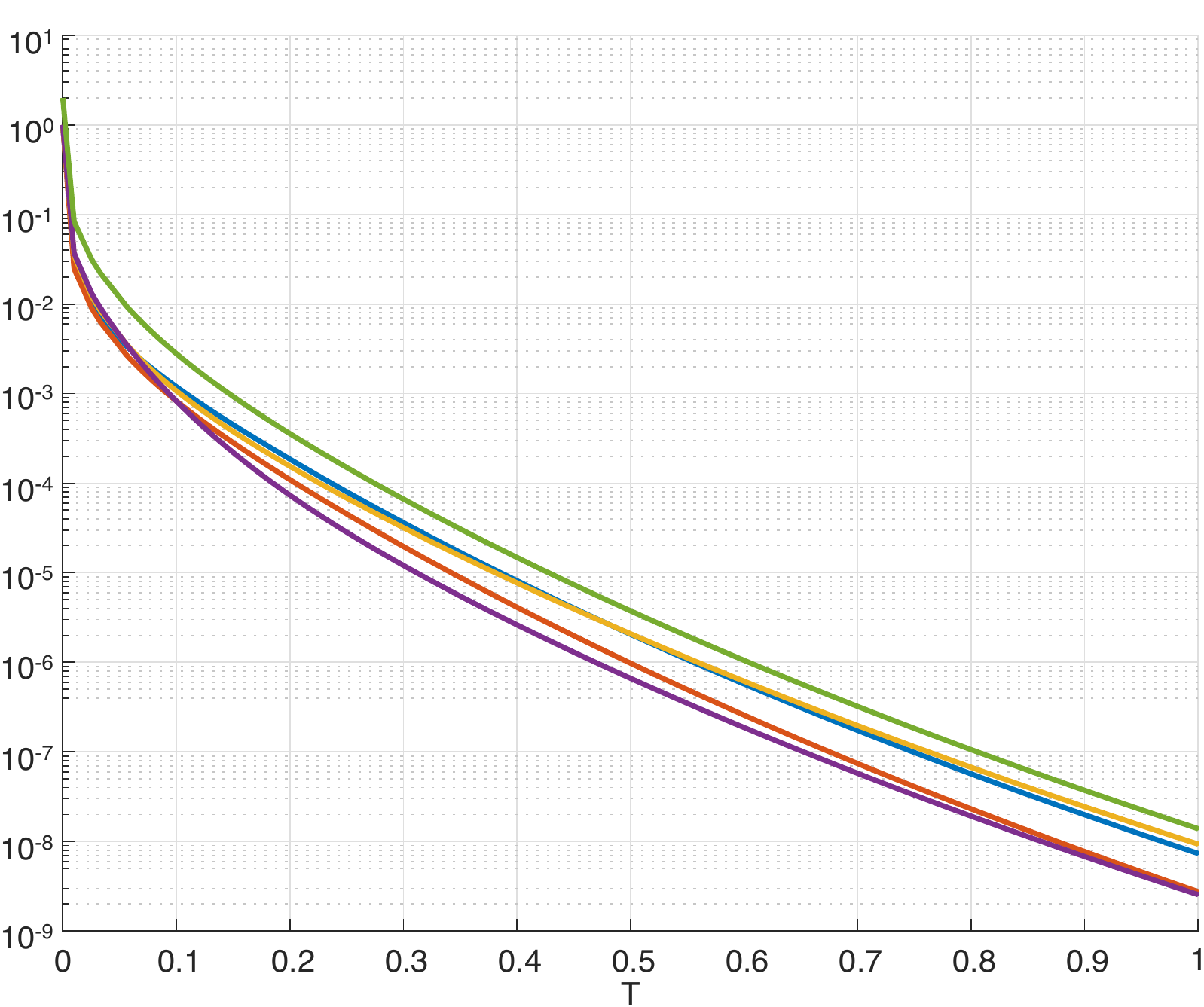}
        \includegraphics[scale=0.29]{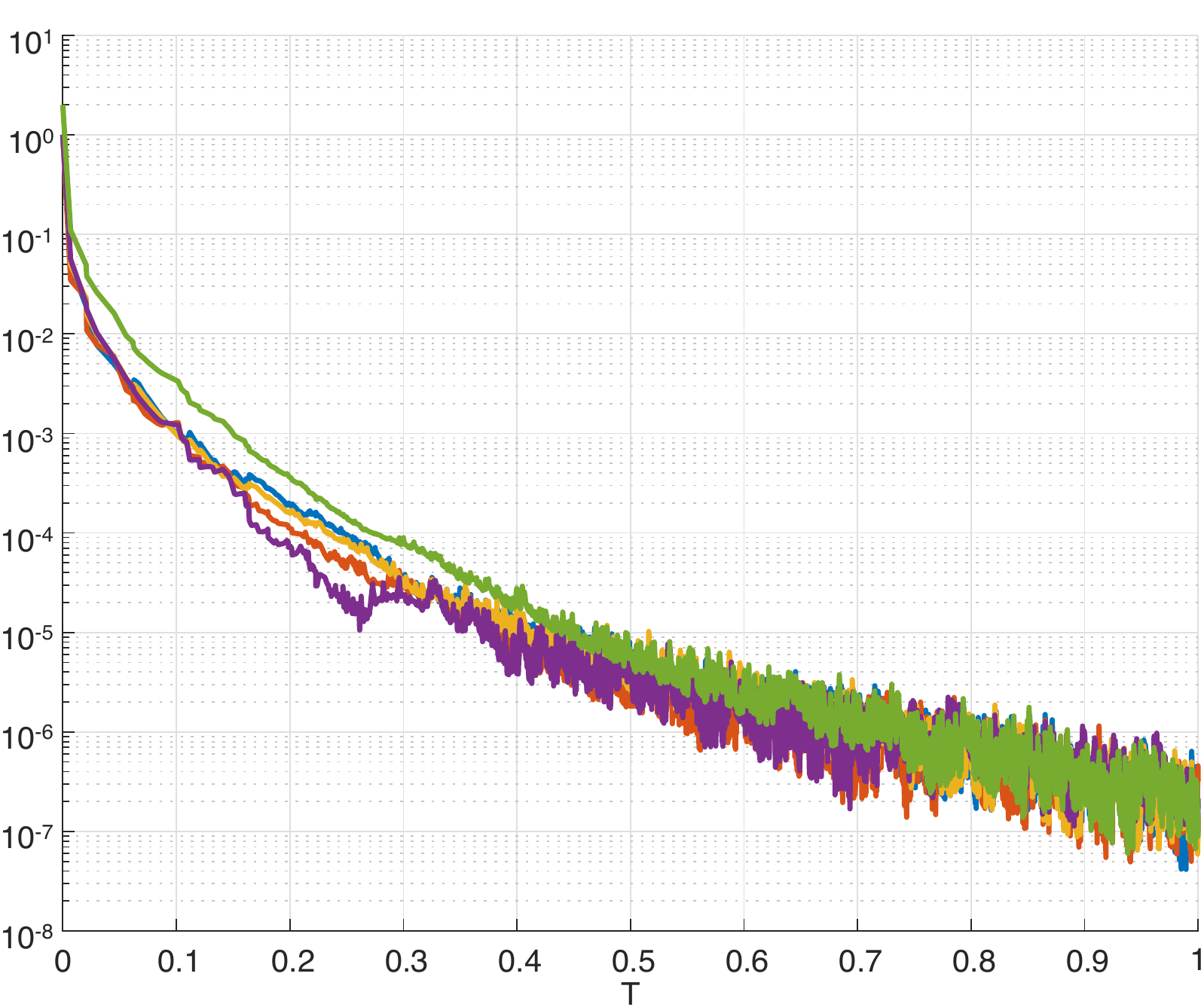}
\caption{Mean ensemble collapse of all $\Nens=5$ particles for the three methods. The different colors represent the ensemble collapse of one ensemble member respectively. Left figure: EKI; middle figure: single-subsampling; right figure: batch-subsampling}.
\label{fig:ek_lin_vi_exp_decay_dim}
\end{figure}

We can see in figure \ref{fig:mean_err_vi_exp_decay_dim} similar results as in \ref{fig:mean_err_vi_exp_decay}. Both subsampling methods converge at a similar rate towards the solution. However, they are both converging at a slower rate than the EKI. Furthermore, this experiment shows less noise in the subsampling approaches as opposed to a constant variance inflation. Figure \ref{fig:ek_lin_vi_exp_decay_dim} shows the ensemble collapse of all methods. We get similar results to the experiment conducted before.

\subsubsection{TEKI without variance inflation}

In this subsection we consider experiments without variance inflation. In theorem \ref{thm:ssTEKI} we proved convergence for single-subsampling. However, we weren't able to obtain a result for batch-subsampling. We illustrate numerical results, indicating that we can also expect convergence for batch-subsampling without variance inflation.\\

We consider again $\beta=10$ as regularisation parameter and compute the solution up until $T=1e6$. This time we use a linear decaying learning rate $\eta(t)=(at+b)^{-1}$, with $a,b=100$. However, due to the decreasing switching times of the subsets, the algorithm becomes computationally slow. We therefore only use the linear decaying learning rate up until $T=10^1$. Afterwards we consider $1e5$ equidistant switching times. Up until $T=1$ we obtain approx $6000$ data switches. Finally, we illustrate our results in log-log plots, since we expect a similar convergence rate as the EKI, which is algebraic. Through Log-log plots, it is easier to observe this convergence rate as opposed to semi-log y plots. Again we conduct $N=32$ experiments and illustrate the mean absolute error.

\begin{figure}[h]
\centering
\captionsetup{width=.9\linewidth}
\includegraphics[scale=0.42, trim = 1cm 0cm 0cm 1.1cm,clip]{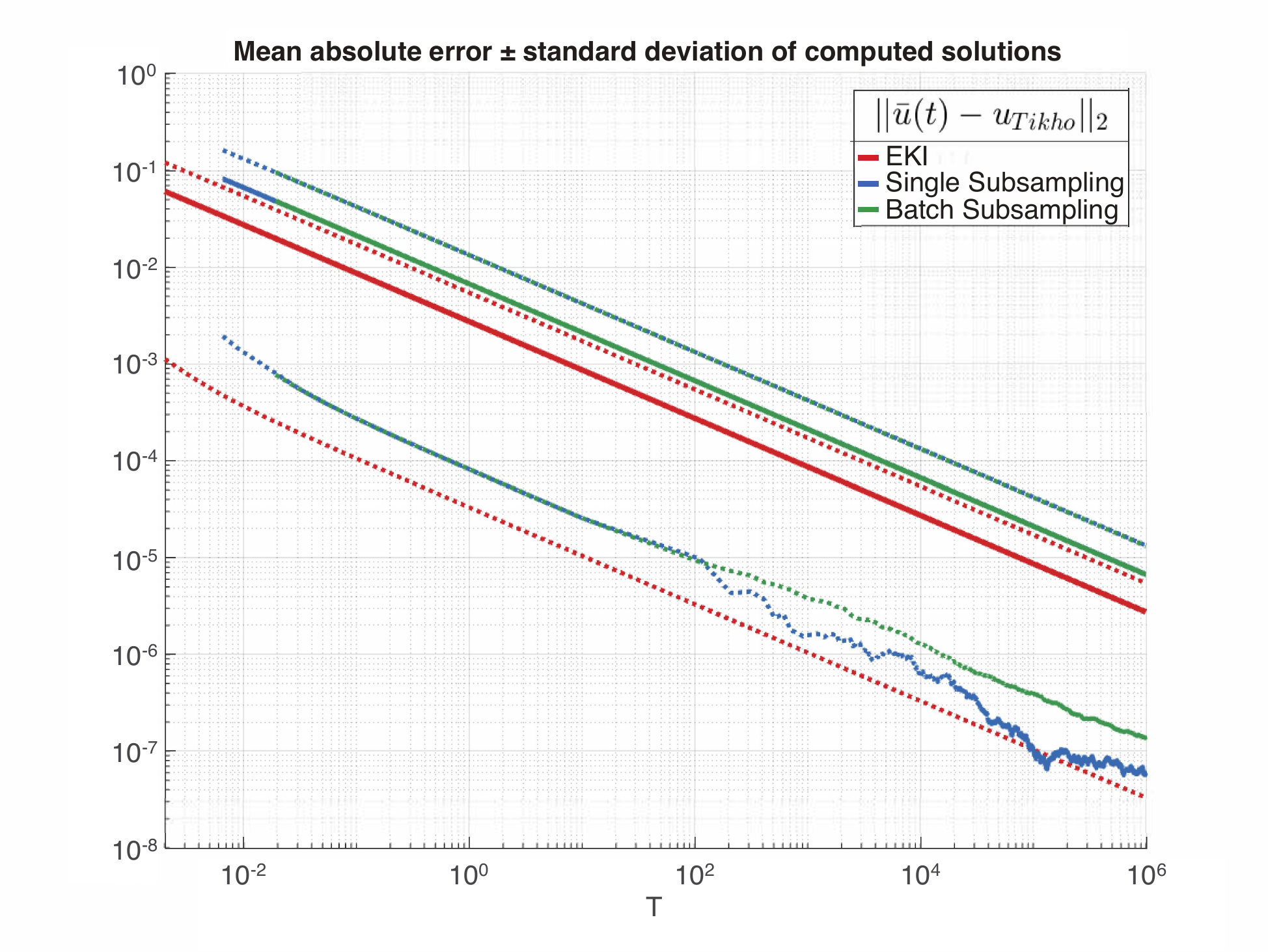}
\hspace*{-0.8cm}%
\includegraphics[scale=0.42, trim = 1.5cm 0cm 0cm 1cm,clip]{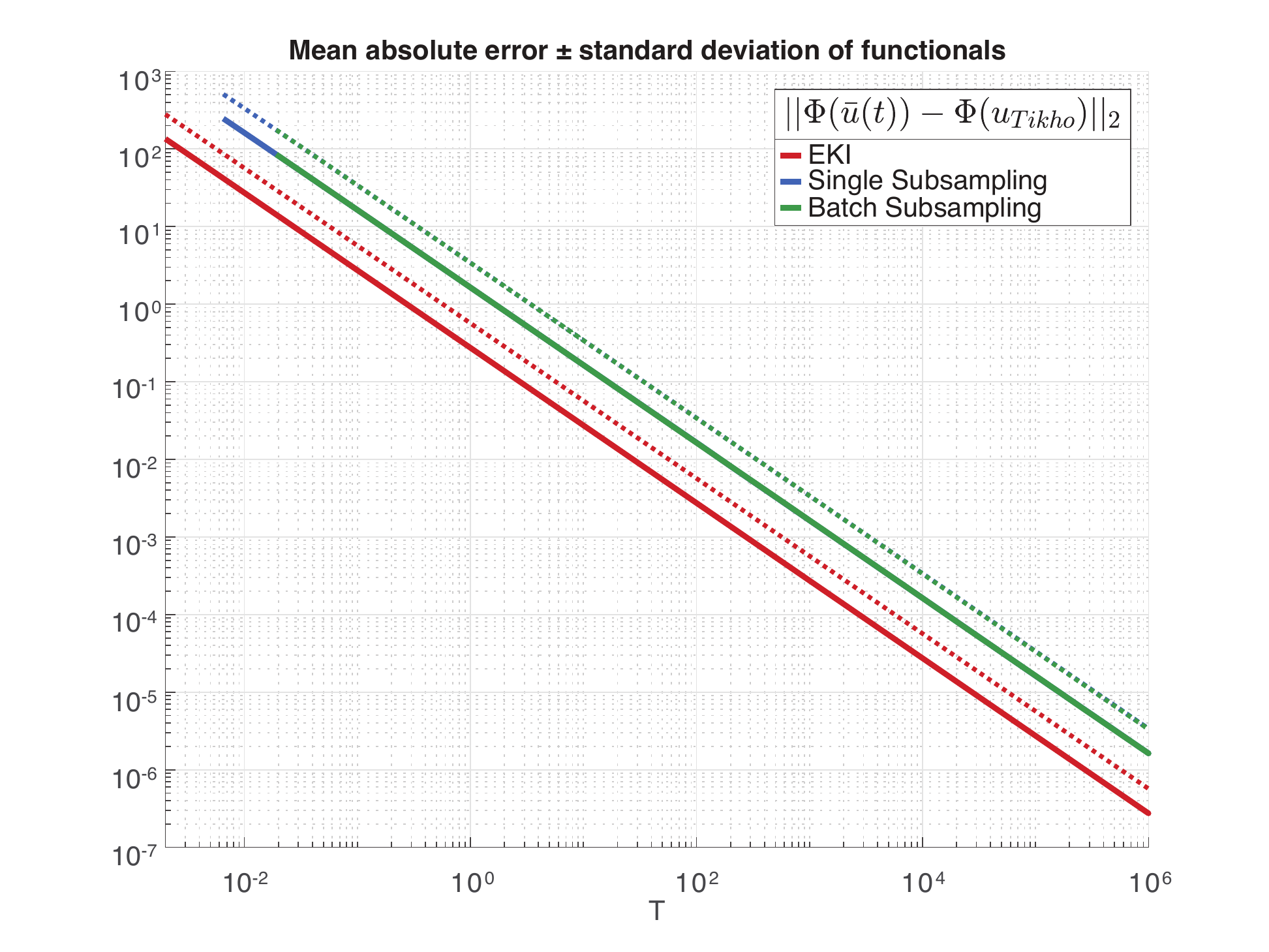}
\caption{Mean absolute errors $\pm$ standard deviation of computed solutions in the parameter (left) and observation space (right). The red line illustrates the EKI, the blue line single-subsampling and the green line batch-subsampling.}
\label{fig:mean_err_nvi_decay_dim}
\end{figure} 

Figure \ref{fig:mean_err_nvi_decay_dim} depicts the mean absolute error in the parameter and observation space to the Tikhonov solution. In those experiments we also included the mean errors $\pm$ one standard deviation. Both subsampling methods show similar results to the EKI. Our methods are therefore suitable alternatives to the EKI, since we obtain the same convergence rate, however we have less computational costs. Note that the mean minus standard deviation is negative in the right picture and therefore not shown.

\begin{figure}[H]
\centering
\captionsetup{width=.9\linewidth}
        \includegraphics[scale=0.29]{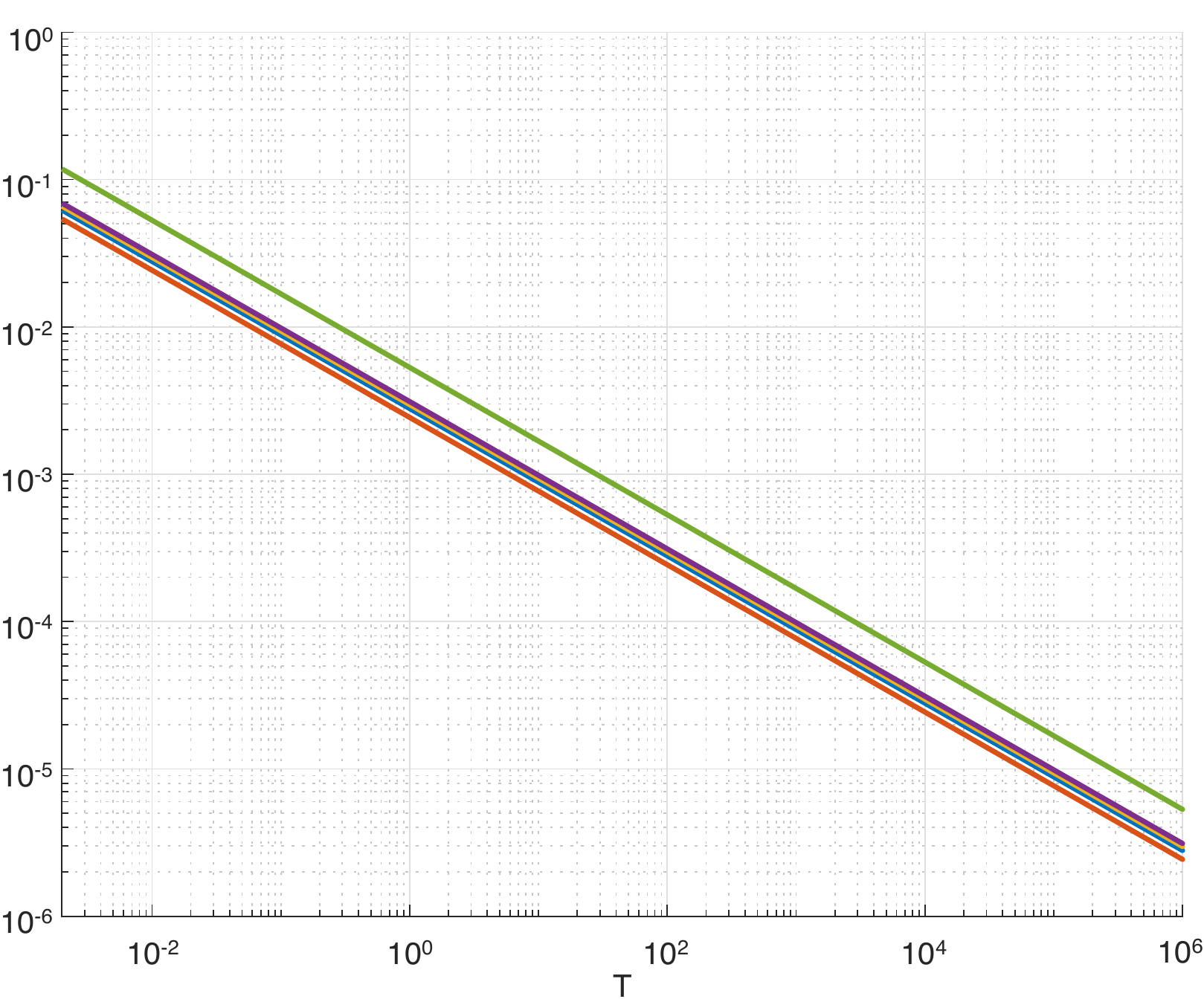}
        \includegraphics[scale=0.29]{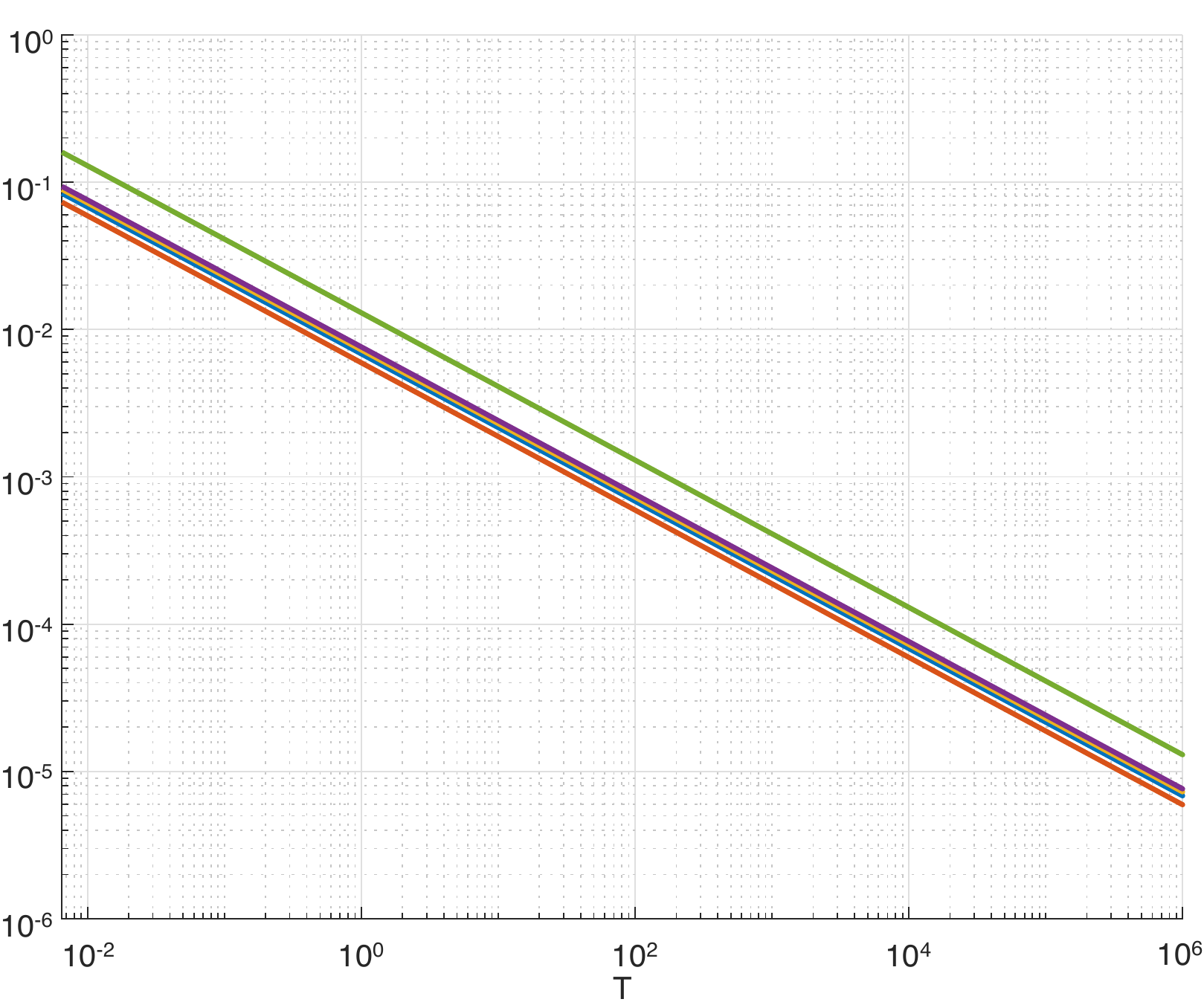}
        \includegraphics[scale=0.29]{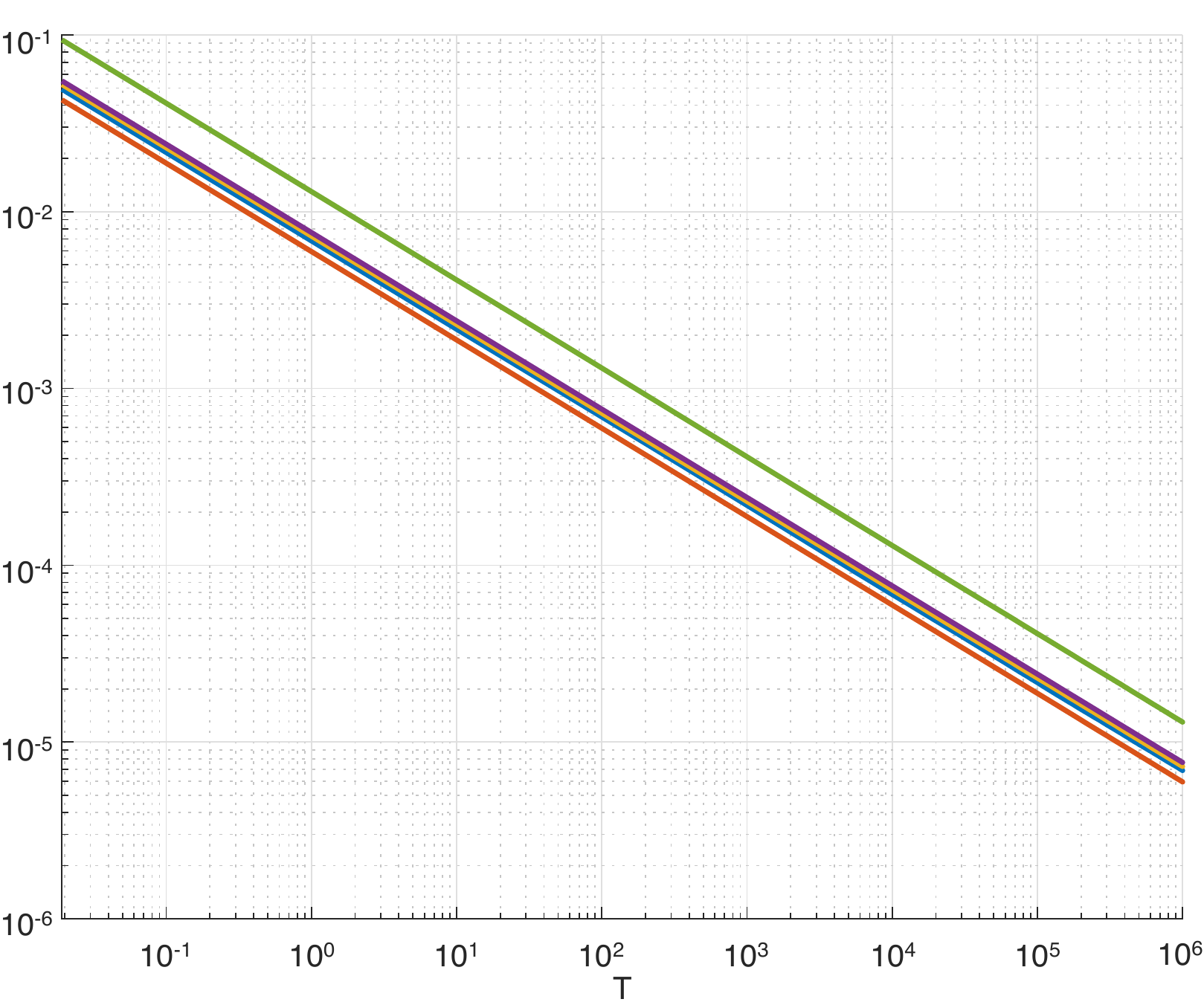}
\caption{Mean ensemble collapse of all $\Nens=5$ particles for the three methods. The different colors represent the ensemble collapse of one ensemble member respectively. Left figure: depicts EKI; middle figure: single-subsampling; right figure: batch-subsampling.}
\label{fig:ek_lin_nvi_decay_dim}
\end{figure}

We can see in figure \ref{fig:ek_lin_nvi_decay_dim} that the ensemble collapse also happens at an algebraic rate for EKI as well as both of our subsampling methods.\\

Furthermore, we illustrate the results of one single run:
We simulate our prior distribution $X$ also by the same KL-expansion, using $18$ terms. We then make $\Nens=20$ independent draws to simulate our initial ensemble. Moreover, we do not use variance inflation and use a regularisation of $\beta=10$, we compute the solution until time $T=1e7$ and use until time $T=10$ the same linear decaying learning rate as in the previous experiment with $a,b=10$. Afterwards we consider again a constant learning rate. We obtain around $600$ switching times until $T=10$ using those parameters.

\begin{figure}[H]
\centering
\captionsetup{width=.9\linewidth}
\includegraphics[scale=0.42, trim = 1cm 0cm 0cm 1.1cm,clip]{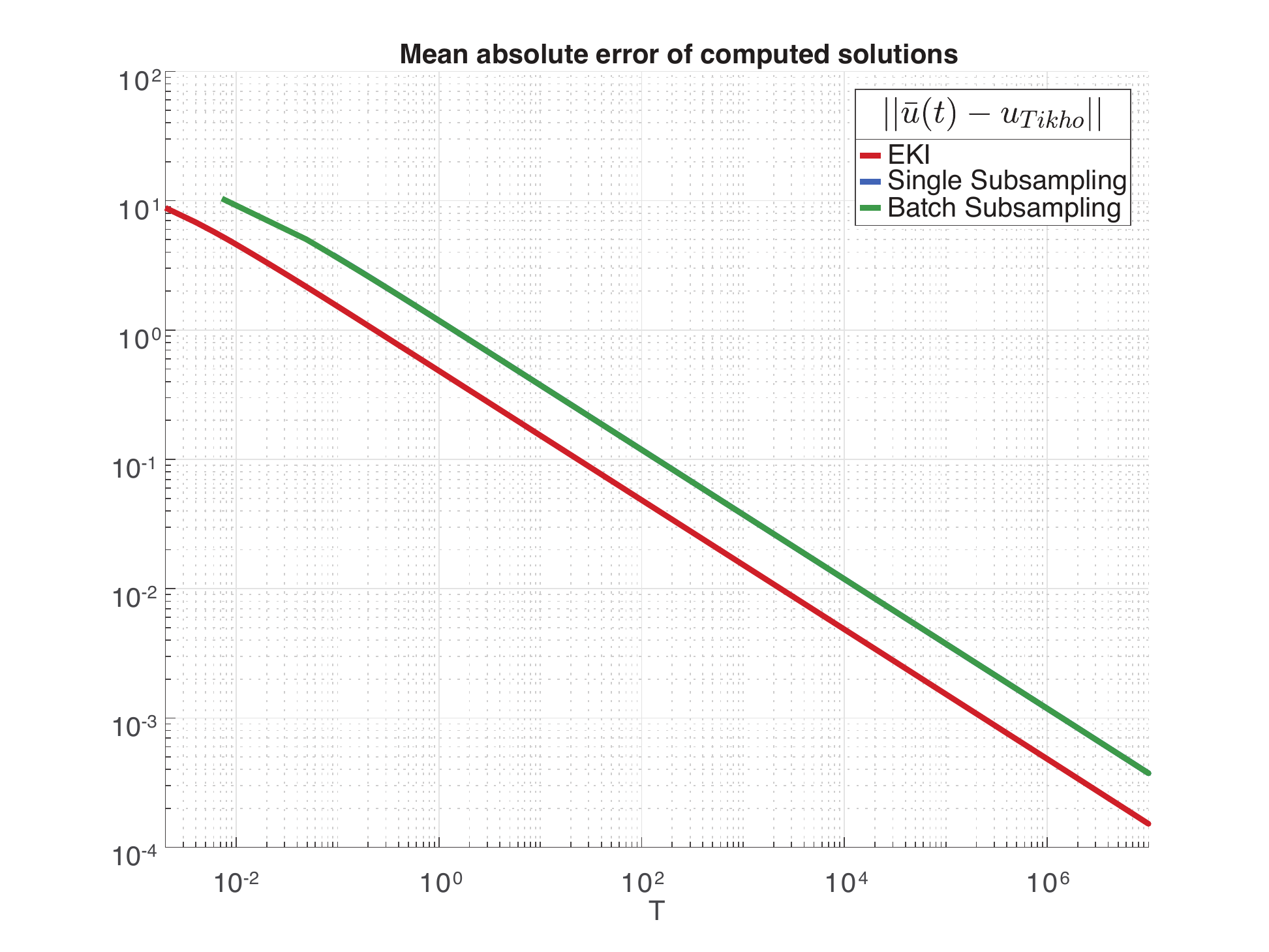}
\hspace*{-0.8cm}%
\includegraphics[scale=0.42, trim = 1.5cm 0cm 0cm 1cm,clip]{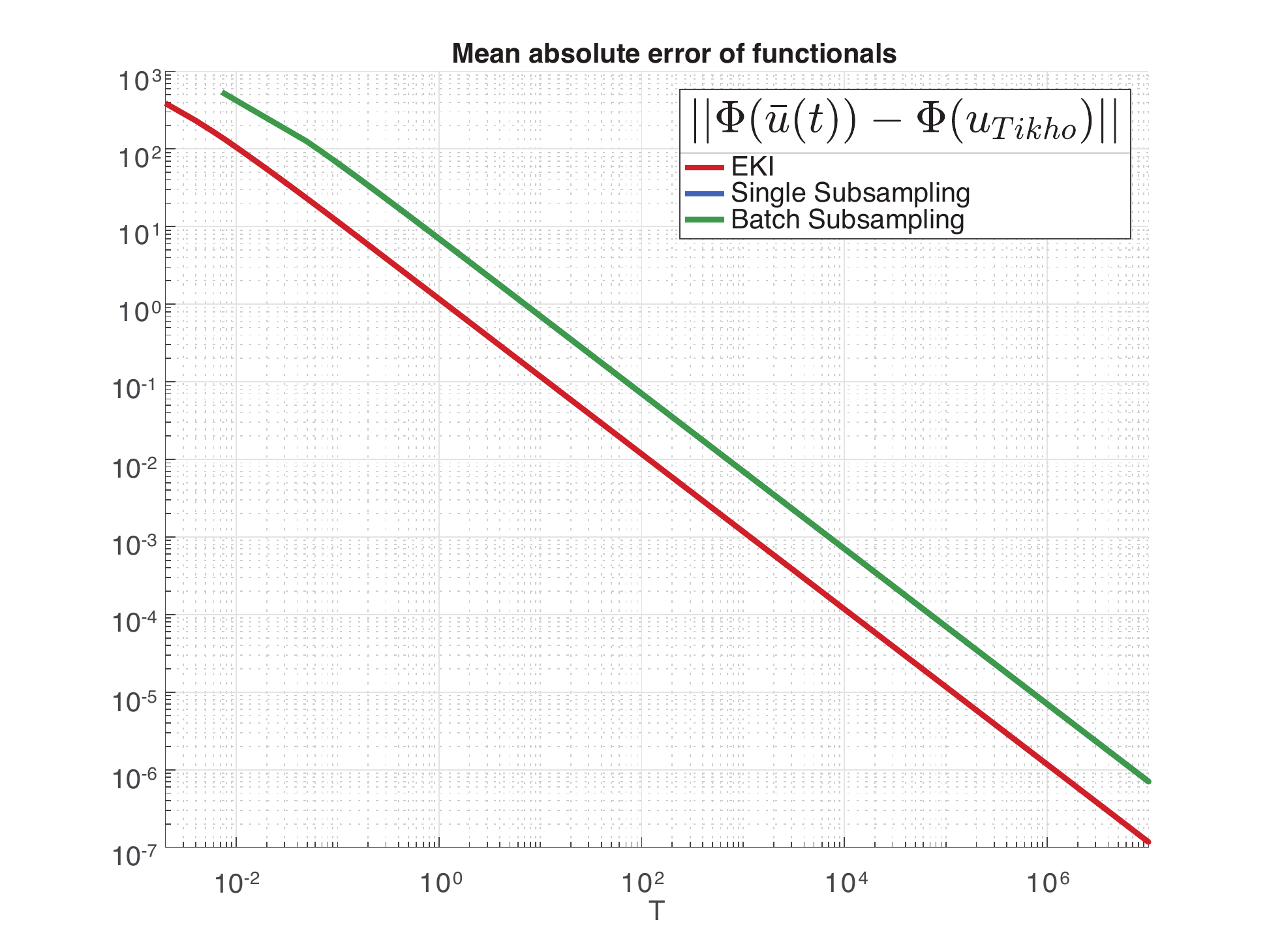}
\caption{Absolute error of computed solutions in the parameter (left) and observation space (right). The red line illustrates the EKI, the blue line single-subsampling and the green line batch-subsampling.}
\label{fig:abs_err_nvi_lin_decay_single_run}
\end{figure} 

In Figure \ref{fig:abs_err_nvi_lin_decay_single_run}, we can see again that the error of all three solutions behaves similarly. Single and batch subsampling compute nearly identical solutions, therefore the error of those two methods are almost identical. The only difference to EKI is the starting time at which the solution is evaluated. The randomly computed starting time of our subsampling approaches is a little higher than the starting point of the EKI. Therefore, there is a slight shift in the error curves.\\
Furthermore, we illustrate how the computed solution behaves over time. We compare the solutions to the Tikhonov solution.

\begin{figure}[H]
\centering
\captionsetup{width=.9\linewidth}
        \includegraphics[scale=0.39, trim = 0.1cm 0cm 1.8cm 0cm,clip]{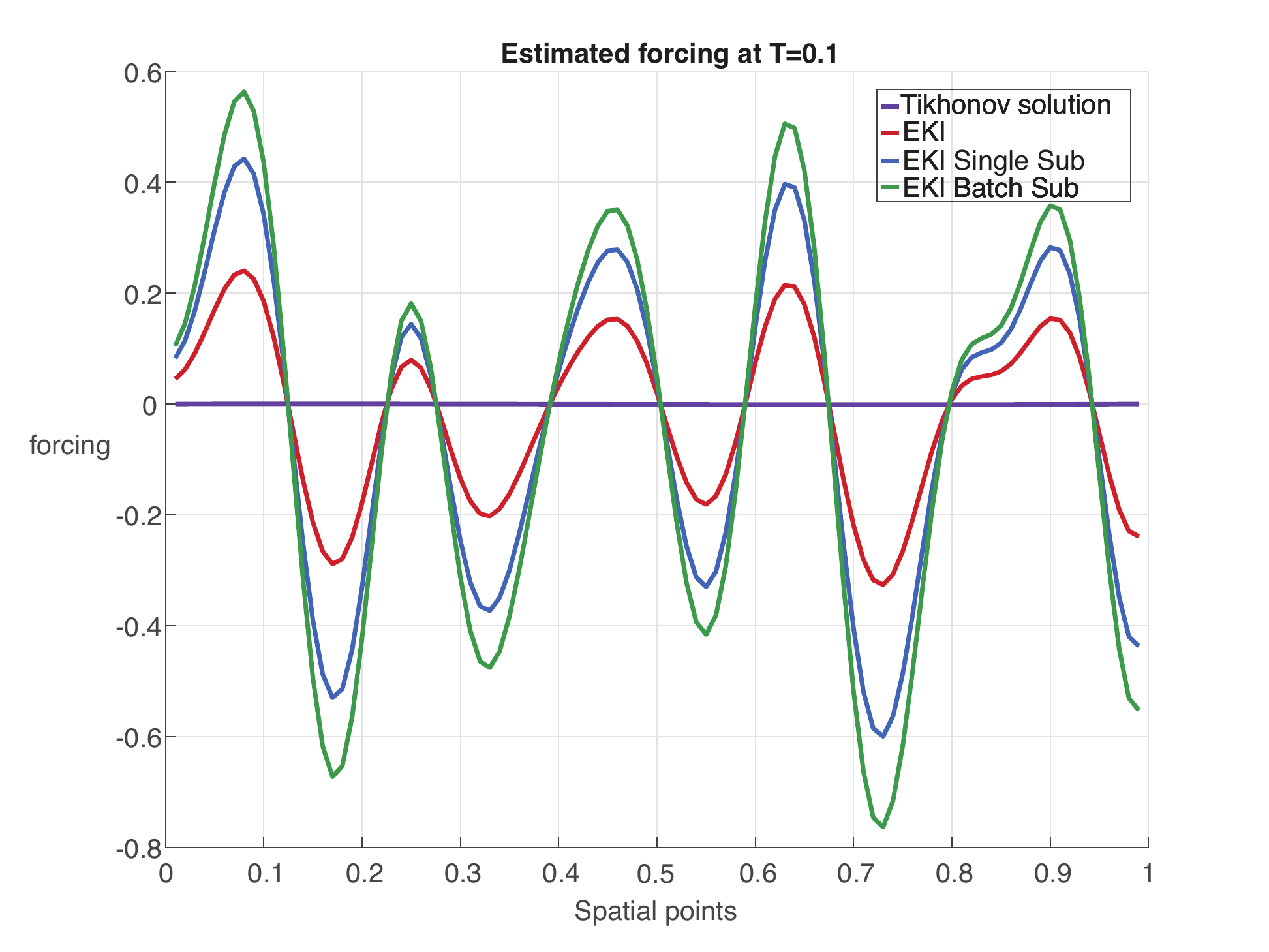}
        \includegraphics[scale=0.39, trim = 1.8cm 0cm 1.8cm 0cm,clip]{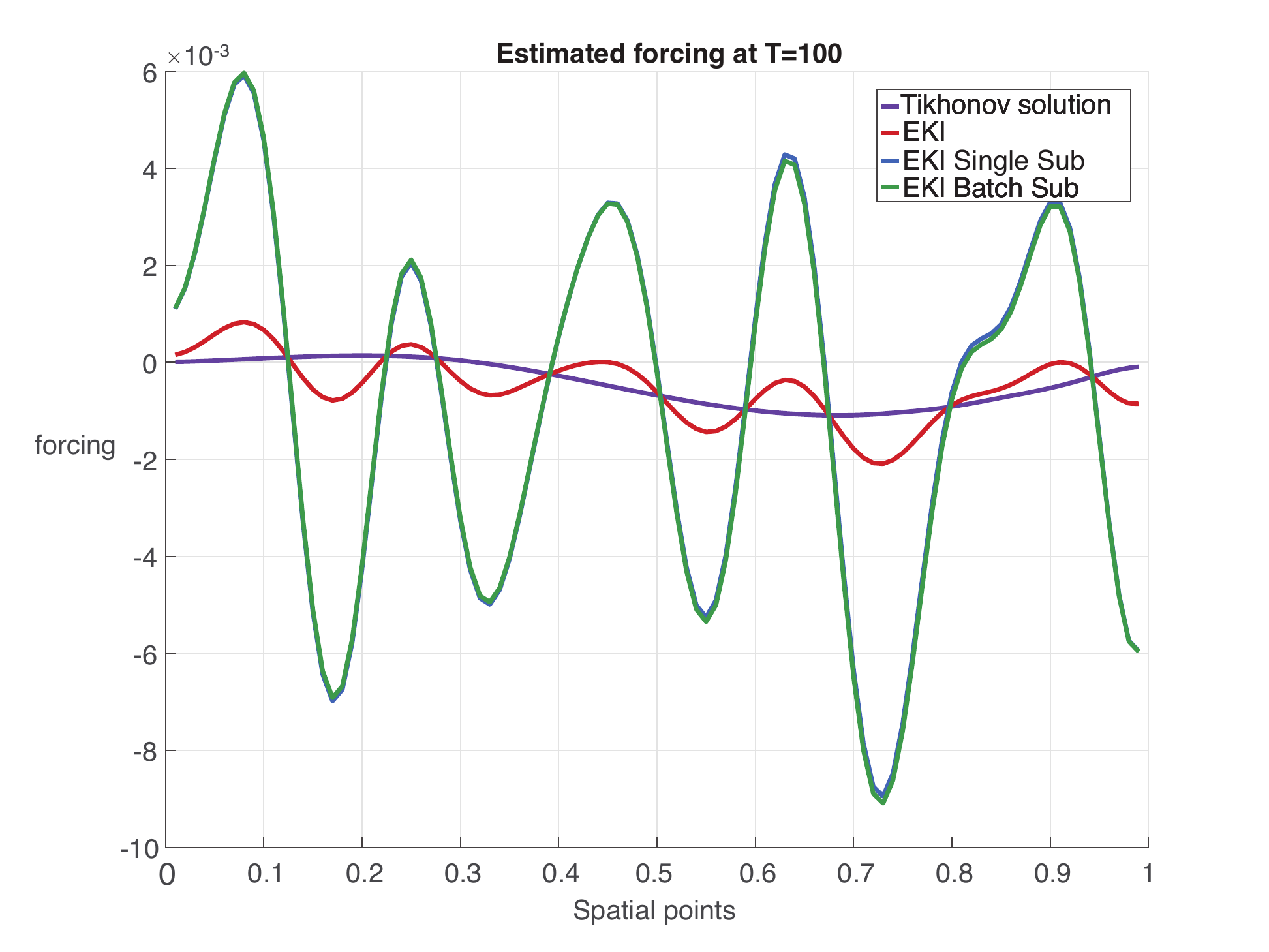}
        \includegraphics[scale=0.39, trim = 0.1cm 0cm 1.8cm 0cm,clip]{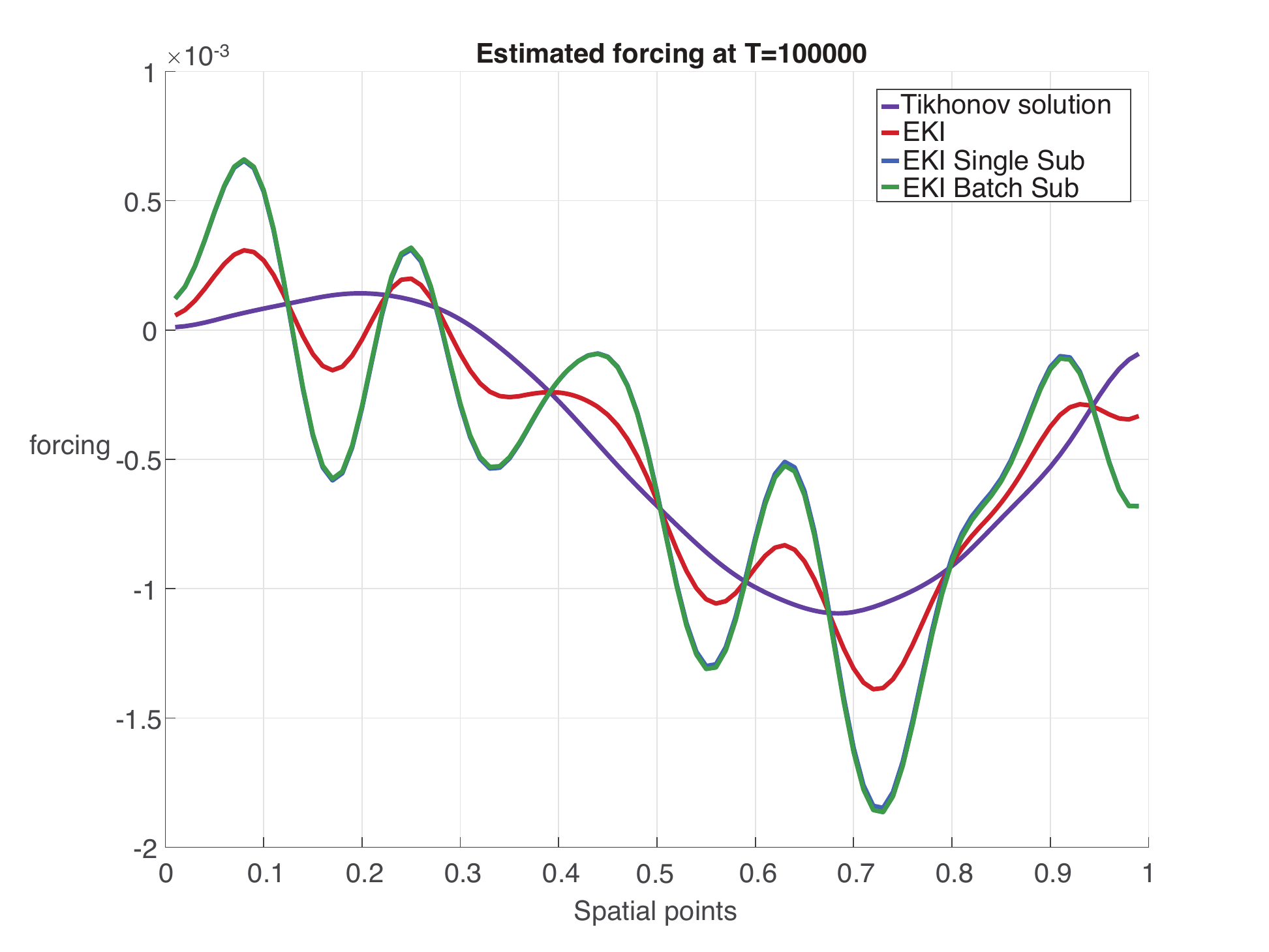}
        \includegraphics[scale=0.39, trim = 1.8cm 0cm 1.8cm 0cm,clip]{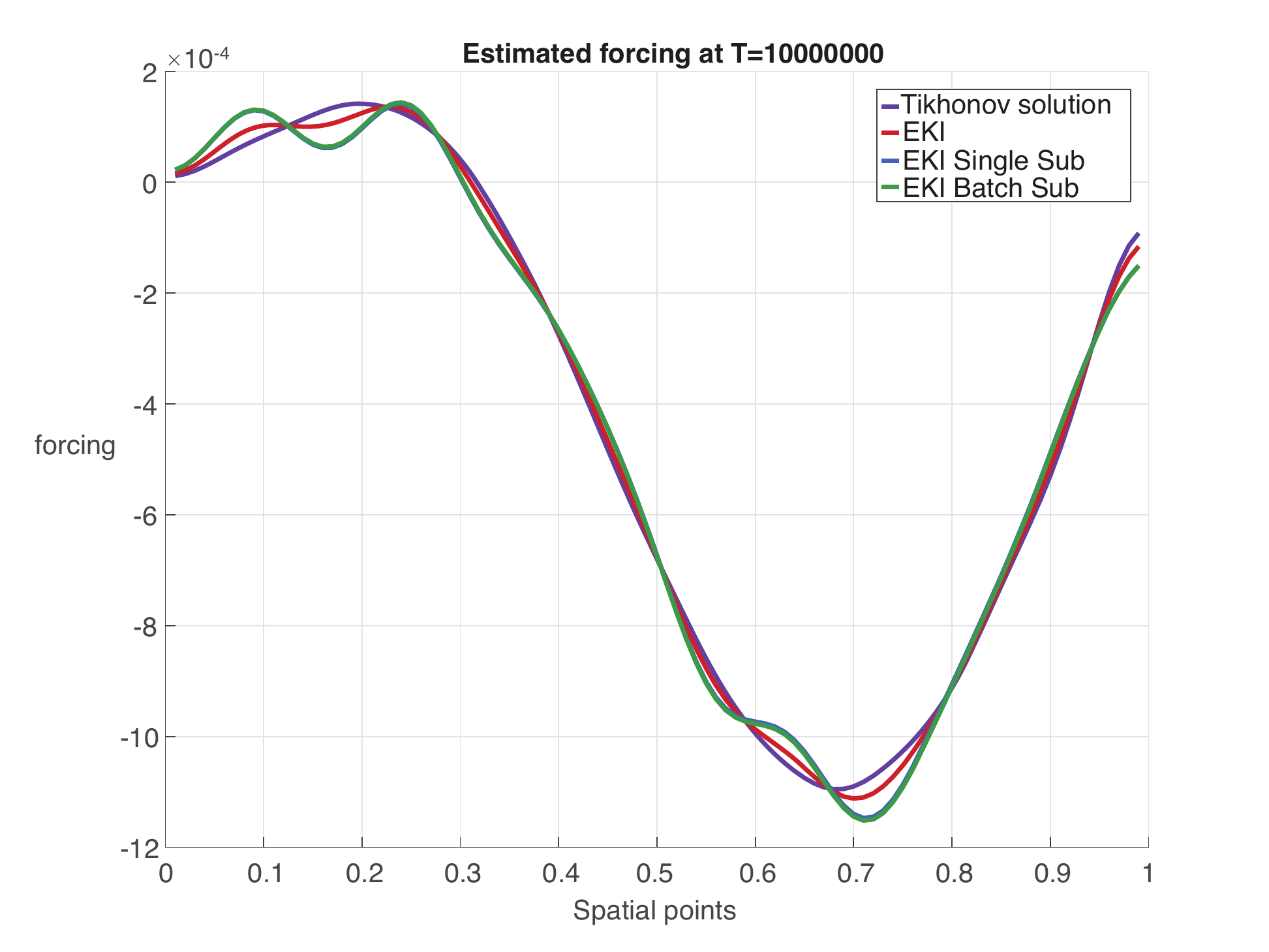}
\caption{Development of computed solution over time in comparison with the Tikhonov solution at times $T=10^{-1}$ (upper left), $T=10^2$(upper right), $T=10^5$ (lower left) and $T=10^7$ (lower right). The red line illustrates the EKI, the blue line single-subsampling, the green line batch-subsampling and purple is the reference solution $\theta^\star$.}
\label{fig:dev_hq_time}
\end{figure}

Figure \ref{fig:dev_hq_time} shows the development of the computed solution over time. Here one can see that there is a slight difference between single and batch-subsamling at $T=0.1$. However, both methods quickly converge to one another as one can see in the computed results at $T=100$. As mentioned above the EKI is slightly faster due to beginning a bit earlier. However at $T=10000000$ all methods approximate the Tikhonov solution quite similarly.

\subsection{Nonlinear 2D Darcy flow} \label{sec:secondexample}

We introduce in this section one experiment with a non-linear forward operator $G$. Even though our theory only covers the linear case, we will illustrate that subsampling also leads to good results in the nonlinear setting. As subsampling strategy we only consider single-subsampling. The example is motivated by \cite{Chada2020} and \cite{Garbuno2020}.\\
Consider the following elliptic PDE.
\begin{equation} \label{pde:2ddarcy}
    \left\{
    \begin{aligned}
                -\nabla \cdot \left(\exp(u)\nabla p\right)  &= f,  \qquad  x\in D\\
            p&=0,   \qquad     x\in\partial D
    \end{aligned}
    \right.,
\end{equation}
where $D=\left(0,1\right)^2$. We seek to recover the unknown diffusion coefficient $u^\dagger\in C^1(D)=X$, given observation of the solution $p\in H_0^1(D)\cap H^2(D):=\mathcal{V}$. Furthermore, we assume that the scalar field $f\in\mathbb{R}$ is known.\\
The observations are given by
$$y=\mathcal{O}(p)+\eta,$$
where $ \mathcal{O}(p):\mathcal{V}\rightarrow \mathbb{R}^K $ is the observation Operator, that considers $K$ randomly chosen points in $X$, i.e. $\mathcal{O}(p)=\left(p(x_1),...,p(x_K)\right)$. Finally, $\eta$ denotes the noise on our data and is assumed to be Gaussian, i.e. a realisation of $\mathcal{N}(0,\Gamma)$, where $\Gamma=0.1^2 \mathrm{Id}_K$\\
Then our inverse problem is given by
$$y=\mathcal{G}(u)+\eta,$$

where $\mathcal{G}=\mathcal{O}\circ G$ and $G:X\rightarrow\mathbb{R}^K$ denotes the solution operator of the PDE \eqref{pde:2ddarcy}. We solve the PDE on a uniform mesh with a grid size of $h=2^{-8}$ using a FEM method with continuous, piecewise linear finite element basis functions.
We model our prior distribution as the random field
$$u(x,\omega)=\sum_{i=1}^s\lambda_i^{1/2}e_i(x)\xi_i(\omega),$$
where we have $\lambda_i=\left(\pi^2(k_j^2+l_j^2)+\tau^2\right)^{-\alpha}$ and $e_i(x)=\cos(\pi x_1 k_j)\cos(\pi x_2 l_j)$ with $\tau=0.01,\alpha=2,s=25,(k_j,l_j)_{j\in\{1,...,s\}}\in\{1,...,s\}^2$. The variables $\xi_i$ are i.i.d standard normal variables. Afterwards we make $\Nens=10$ independent draws for our initial ensemble.\\
The dimension of the parameter space is $d=2^8$ due to the grid size. We take $K=30$ observations and divide them into $\Nsub=5$ many subsets.\\
We use a linear decaying learning $\gamma(t)=(a+b t)^{-1}$, where $a,b=10$. As regularisation factor we consider $\beta=10$ and compute the solution up until time $T=10^5$. Again we note that due to the decrease of the switching times of the data sets the algorithm becomes computationally very slow. Therefore, we only use a linear decaying switching rate up until time $T=10^1$ from there on we consider $10^5$ equidistant switching times. 

Note that we again work in a subspace that is smaller than $X$. Therefore, we need to compare the computed solution with the respective one given the subspace.\\
The particles stay in the affine space $u_0^\perp +\mathcal E$ for all $t \ge 0$. Therefore, the reference solution is given by $u_0^\perp +u_{\mathcal E,j}^{\dagger}$. We formulate it as a constrained optimisation problem
\begin{equation*}
    \min_{u\in\mathcal E+\theta_0^\perp} \frac12  \|G (u) -   y\|_\Gamma^2+\frac{\beta}{2}\|u\|^2\,,
\end{equation*}

and use MATLABs \verb+fmincon+ solver to compute it.\\

\begin{figure}
    \centering
    \captionsetup{width=.9\linewidth}
    \includegraphics[width=5.3in,trim={1.8cm 4.8cm 0.8cm 4.4cm},clip]{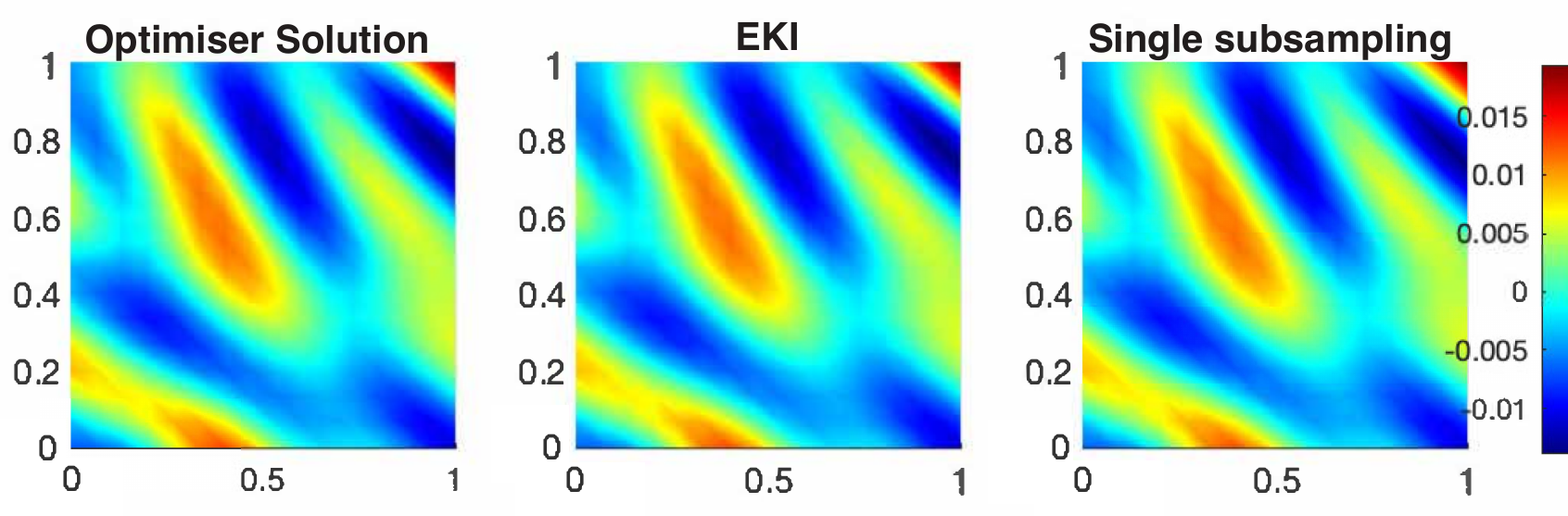}
    \caption{Comparison of computed diffusion coefficient $u$. Left figure: optimiser; middle figure: EKI; right figure: single-subsampling.}
    \label{fig:com_solutions_param}
\end{figure}

Figure \ref{fig:com_solutions_param} shows the computed solutions given by the different algorithms. The left picture is the solution given by MATLABs \verb+fmincon+ solver. In the middle, the solution computed by the normal EKI is depicted and on the right the result of the single-subsampling algorithm shown. One can see that both algorithms compute visually very similar solutions as the optimiser.

\begin{figure}[H]
\centering
\captionsetup{width=.9\linewidth}
\includegraphics[scale=0.42, trim = 1cm 0cm 0cm 1.1cm,clip]{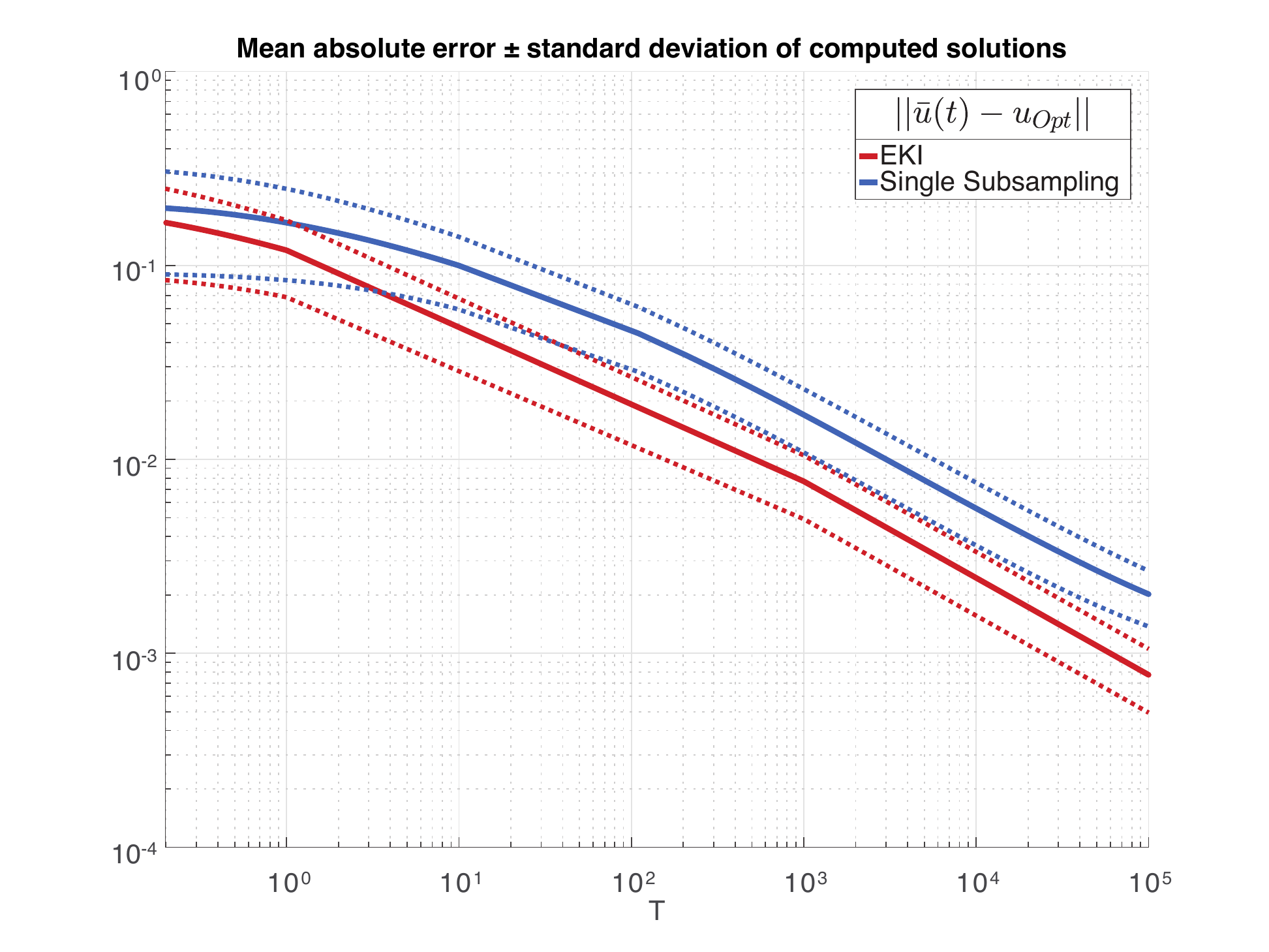}
\hspace*{-0.8cm}%
\includegraphics[scale=0.42, trim = 1.5cm 0cm 0cm 1cm,clip]{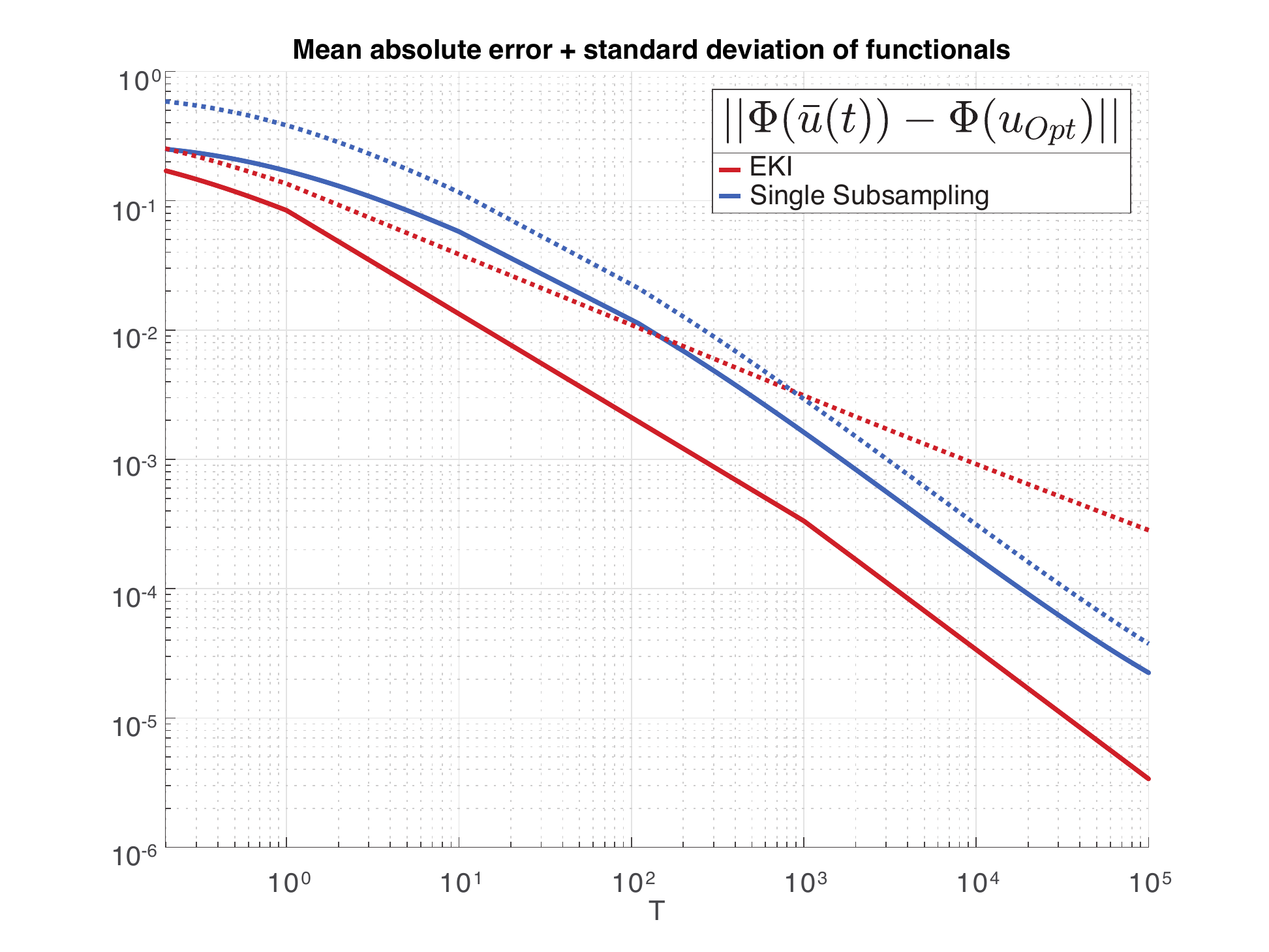}
\caption{Mean absolute errors $\pm$ standard deviation of computed solutions in the parameter (left) and observation space (right). The red line illustrates the EKI and the blue line single-subsampling.}
\label{fig:err_t1e5_beta1}
\end{figure}

Figure \ref{fig:err_t1e5_beta1} depicts the mean errors of $N=32$ runs $\pm$ one standard deviation in the parameter space (left subplot) as well as of the functionals (right subplot) evaluated in the corresponding solutions. We can see that both errors behave similarly and are converging towards zero at an algebraic rate. As in the linear example we note that the mean minus standard deviation is negative in the
right picture and therefore not shown.

\begin{figure}[H]
\centering
\captionsetup{width=.9\linewidth}
\includegraphics[scale=0.42, trim = 0.01cm 0cm 0cm 0.4cm,clip]{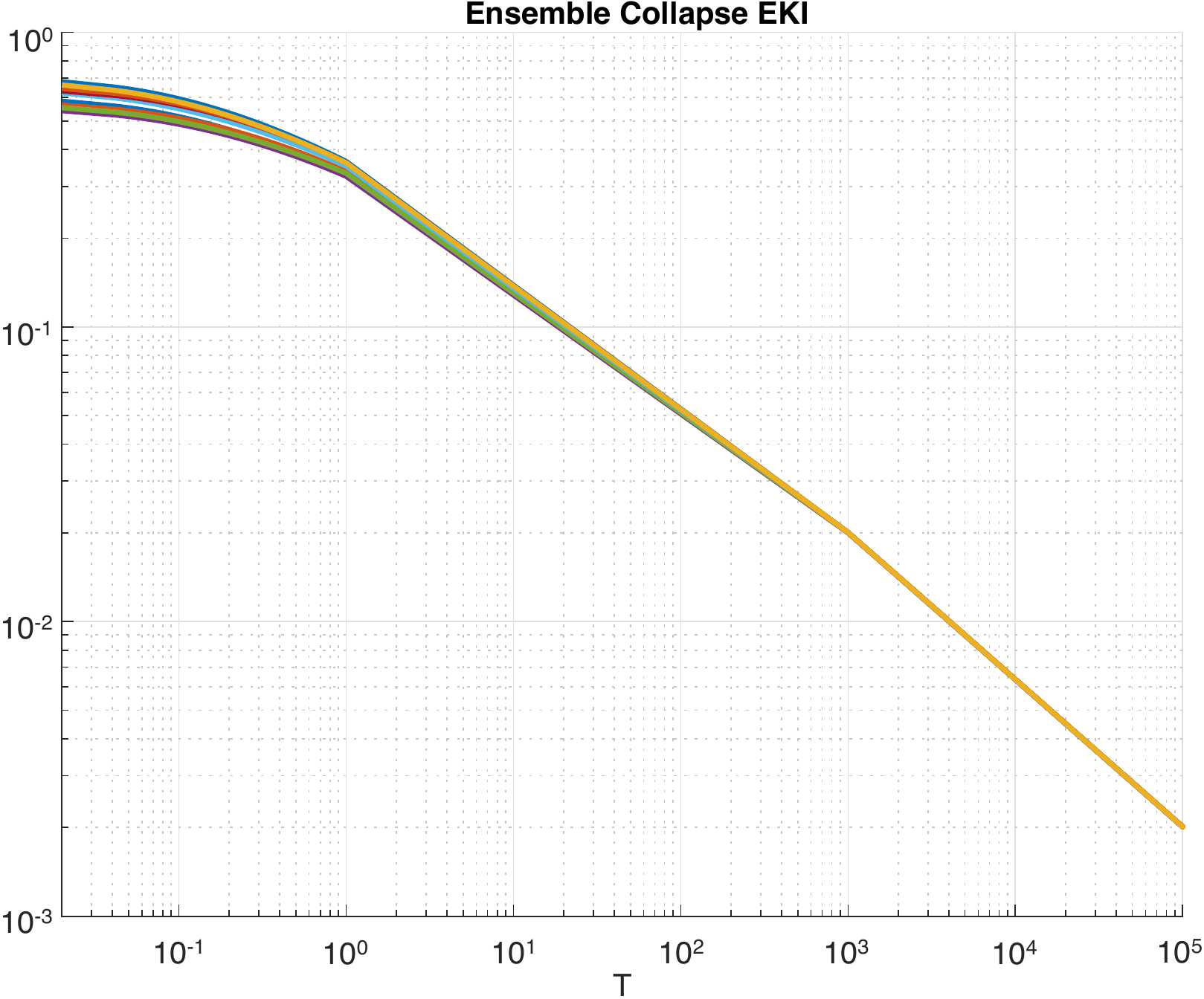}
\includegraphics[scale=0.42, trim = 1.5cm 0.5cm 0cm 1.1cm,clip]{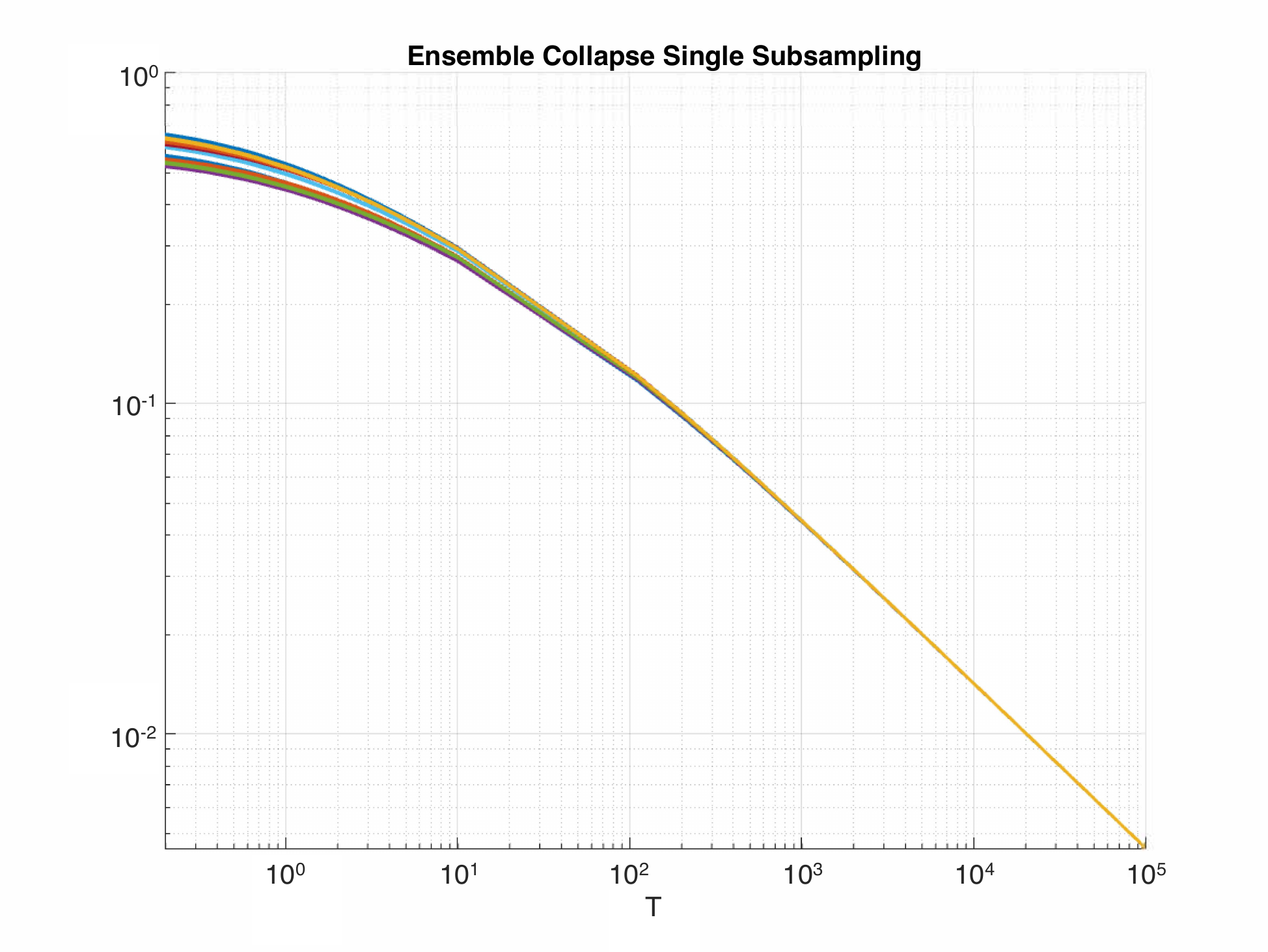}
\caption{Mean ensemble collapse of all $\Nens=10$ particles for the two methods. The different colors represent the ensemble collapse of one ensemble member respectively. Left figure: EKI; right figure: single-subsampling.}
\label{fig:ek_t1e5_beta1}
\end{figure}

Figure \ref{fig:ek_t1e5_beta1} shows the mean ensemble collapse of the $N=32$ runs. Again the left figure shows the results of the EKI, whereas the right for single subsampling. We can see that in both methods the collapse occurs at a similar rate. One should note that single subsampling has a smoother result, this however is only due to the amount of observations where we evaluate our solutions. Due to the required frequent changes in subsampling, we obtain more observations.

\section{Conclusions} \label{sec_conclu}
We have introduced subsampling schemes for EKI to allow the application of the method also in the large data regime. Based on recent results on continuous stochastic gradient processes \cite{Latz2021}, two subsampling approaches, $(i)$ single subsampling, where each particle obtains the same data set when switching the data and $(ii)$ batch-subsampling where data sets may differ for each particle, have been considered in the continuous-time setting. By applying Tikhonov regularisation and variance inflation on both methods $(i)$ and $(ii)$ we were able to show convergence of the schemes to the solution of the original EKI version. For the non-variance inflated variant of $(i)$ convergence results with an algebraic rate could be proven. For batch-subsampling we were only able to show convergence when using a vanishing variance inflation over time. However, our numerical experiments in section \ref{sec:firstexample} also showed similar convergence results for non-variance inflated batch-subsampling. The analysis requires the control of the eigenvalues of the empirical covariance w.r. to the initial ensemble. This will be subject to future work. 
Further, we also considered in section \ref{sec:secondexample} a numerical experiment for a non-linear forward operator. Single-subsampling without variance inflation shows similar convergence results as the original EKI. Analysis of subsampling techniques for non-linear forward operators will be also subject for future work.

\section*{Acknowledgements}
MH is grateful to the DFG RTG1953 “Statistical Modeling of Complex Systems and Processes” for funding of this research. JL thanks the Engineering and Physical Sciences Research Council for their support through grant EP/S026045/1. CS acknowledges support from MATH+ project EF1-19: Machine Learning Enhanced Filtering Methods for Inverse Problems and EF1-20: Uncertainty Quantification and Design of Experiment for Data-Driven Control, funded by the Deutsche Forschungsgemeinschaft (DFG, German Research
Foundation) under Germany's Excellence Strategy – The Berlin Mathematics
Research Center MATH+ (EXC-2046/1, project ID: 390685689). We are also grateful to the state of Baden-Württemberg through bwHPC.

\appendix

\section{Proof of Theorem~\ref{thm_gen_Latz21}}
We now prove Theorem~\ref{thm_gen_Latz21} which we first recall below.
\begin{theorem}[Theorem \ref{thm_gen_Latz21}]\label{thm_gen_Latz21_app} Let Assumption~\ref{Assum_conv}(i)-(ii) hold for a function $h$ and a stochastic approximation process $(\bsi(t), \theta(t))_{t \geq 0}$ with initial values $(i_0, \theta_0) \in I \times X$. Then,
$$
\lim_{t \rightarrow \infty}\mathrm{d}_{\rm W}\left(\delta(\cdot - \theta^*), \mathbb{P}(\theta(t) \in \cdot | \theta_0, i_0)\right) = 0
$$
\end{theorem}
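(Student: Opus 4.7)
The argument generalises the coupling proof of \cite{Latz2021}, replacing the constant dissipation rate used there by the time-dependent $h(t)$; the divergence condition $\int_0^\infty h(t)\,dt=\infty$ will play the role of strict positivity of $h$. Since each $\varphi_t^{(i)}$ is a bijection of $X$, Assumption~\ref{Assum_conv}(ii) is equivalent to uniform monotonicity of $\mathbf{F}_i(\cdot, t)$ with modulus $h(t)$ on all of $X$, valid for every $i\in I$. Averaging passes this monotonicity to $\hPsi$, so the deterministic flow $\bar\varphi$ of $-\hPsi$ is itself contracting and $\bar\varphi_t(\theta_0)\to\theta^*$; this identifies $\theta^*$ and provides a natural deterministic target to compare $\theta(t)$ against.

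The central step is a synchronous coupling. Given two initial conditions $(\theta_0, i_0)$ and $(\theta_0', i_0)$, I would drive both stochastic approximation processes with the \emph{same} realisation of $(\bsi(t))_{t\ge 0}$. Between jumps the two trajectories solve the same ODE, so the monotonicity yields $\tfrac{d}{dt}\|\theta(t)-\theta'(t)\|^2 \le -2h(t)\|\theta(t)-\theta'(t)\|^2$; since neither $\theta$ nor $\theta'$ jumps at switching times of $\bsi$, this bound is global, and
\[
\|\theta(t)-\theta'(t)\|^2 \le \|\theta_0-\theta_0'\|^2 \exp\!\Bigl(-2\int_0^t h(s)\,ds\Bigr) \xrightarrow[t\to\infty]{} 0 \quad \text{almost surely.}
\]
Because the Wasserstein cost $\min\{1,\|\cdot\|^q\}$ is bounded, this pointwise contraction translates into a Wasserstein contraction between the marginal laws of $\theta(t)$ started at different points.

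To identify the limit as $\delta_{\theta^*}$, I would follow the two-stage strategy of \cite{Latz2021}. For each constant learning rate $\bar\eta>0$, the coupling above together with standard Harris-type arguments for piecewise-deterministic Markov processes gives a unique invariant measure $\pi_{\bar\eta}$. A classical averaging argument then shows $\pi_{\bar\eta}\Rightarrow\delta_{\theta^*}$ as $\bar\eta\to 0$: the index jumps at rate $\bar\eta^{-1}\to\infty$, its occupation measure approaches $\mathrm{Unif}(I)$, and the effective drift becomes the average $-\hPsi$ whose unique fixed point is $\theta^*$. For the time-inhomogeneous process with $\eta(t)\to 0$, the contraction from the coupling step keeps $\mathrm{Law}(\theta(t))$ close to the instantaneous invariant measure $\pi_{\eta(t)}$, and the triangle inequality delivers the claim.

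The main technical obstacle is the quantitative interplay between the two decaying scales $h(t)$ and $\eta(t)$. In \cite{Latz2021} $h$ is constant, so contraction dominates any slow decay of $\eta$ with room to spare; here $h(t)$ may itself decay (e.g.\ $\mathcal O(1/t)$ in Theorem~\ref{tmh:ssTEKI}), so the equilibration window on which $\eta$ can be treated as essentially constant must be balanced against the accumulated contraction $\exp(-\int h)$ still absorbing the random-switching perturbation. The qualitative nature of the hypotheses --- only $\eta\to 0$ without a rate, only $\int h=\infty$ without a rate --- allows only a qualitative convergence statement, but the bookkeeping, i.e.\ recycling the estimates of \cite{Latz2021} with $h(t)$-dependent stopping times, is the core technical part of the proof.
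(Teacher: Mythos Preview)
Your proposal follows essentially the same strategy as the paper's proof: a synchronous coupling (same index process) yields the Gr\"onwall bound $\|\theta(t)-\theta'(t)\|^2 \le \exp(-2\int_0^t h(s)\,ds)\|\theta_0-\theta_0'\|^2$, which the paper uses in its Lemma~\ref{aux_lemma} exactly as you describe; then an auxiliary family of processes with capped switching rate is shown to be ergodic with stationary measures converging to $\delta_{\theta^*}$, and a triangle inequality finishes. The only cosmetic difference is that the paper parametrises its auxiliary family via a modified transition matrix $B(t,\varepsilon)=A(-\log(\varepsilon+e^{-t}))$ (so the process coincides with the original one for small $t$ and freezes the rate for large $t$) rather than your homogeneous constant-$\bar\eta$ processes, and it packages the averaging step as a citation to Proposition~4 of \cite{Latz2021} instead of spelling out the occupation-measure argument --- but the structure and the key estimates are the same.
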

To prove this theorem, we first define an auxiliary process $(\bsi^{(\varepsilon)}(t), \theta^{(\varepsilon)}(t))_{t \geq 0}$ that converges to $(\bsi(t), \theta(t))_{t \geq 0}$ as $\varepsilon \rightarrow 0$, but has a bounded transition rate matrix. 
For this process, we show Wasserstein ergodicity. 
Let $\varepsilon > 0$ and $B(t, \varepsilon) := A(-\log(\varepsilon + \exp(-t)))$, $t\geq 0$ where $(A(t))_{t \geq 0}$ is the transition rate matrix given in \eqref{eq_transition_rate_mat}. We then define $(\bsi^{(\varepsilon)}(t))_{t \geq 0}$ to be the stochastic process with transition rate matrix $B(t, \varepsilon)$ and $(\theta^{(\varepsilon)}(t))_{t \geq 0}$ to be the related stochastic approximation process. Moreover, we denote by $K_{t|t_0}^{(\varepsilon)} := \mathbb{P}(\theta^{(\varepsilon)}(t) \in \cdot | \theta^{(\varepsilon)}(t_0) = \cdot)$ the Markov kernel associated with $(\theta^{(\varepsilon)}(t))_{t \geq 0}$, we ignore the underlying dependency on $(\bsi^{(\varepsilon)}(t))_{t \geq 0}$. Similarly, we write $K_{t|t_0} := \mathbb{P}(\theta(t) \in \cdot | \theta(t_0) = \cdot)$.

In a first auxiliary result, we show that $(\theta^{(\varepsilon)}(t))_{t \geq 0}$ is ergodic and converges to a unique stationary measure.
\begin{lemma}\label{aux_lemma} Let Assumption~\ref{Assum_conv}(i)-(ii) hold and let $\varepsilon > 0$.
Then, there is a unique probability measure $\mu_{\varepsilon}$ such that for any initial  distribution $\mu_0 := \mathbb{P}(\theta^{(\varepsilon)}(0) \in \cdot)$ with finite second moment, we have
$$\lim_{t \rightarrow \infty}\mathrm{d}_{\rm W}(\mu_{\varepsilon}, \mathbb{P}(\theta^{(\varepsilon)}(t)) \in \cdot)) = 0.$$
\end{lemma}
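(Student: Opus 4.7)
The plan is to combine a synchronous-coupling contraction with a Cauchy construction of $\mu_\varepsilon$.

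First I would fix $\varepsilon > 0$ and a single realisation of the time-inhomogeneous CTMP $(\bsi^{(\varepsilon)}(t))_{t \geq 0}$. Driven by this shared realisation, consider two copies $(\theta^{(\varepsilon)}_1(t))_{t\ge 0}$ and $(\theta^{(\varepsilon)}_2(t))_{t\ge 0}$ of the auxiliary stochastic approximation process, started at $\theta_{0,1}, \theta_{0,2}$ drawn from an optimal coupling of two initial laws $\mu_0, \mu_0'$ with finite second moment. Between consecutive jumps of $\bsi^{(\varepsilon)}$ both copies solve the same ODE $\dot{\theta} = -\mathbf{F}_i(\theta, t)$. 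Varying the initial datum of $\varphi^{(i)}$, Assumption~\ref{Assum_conv}(ii) amounts to the pointwise monotonicity
\begin{equation*}
\langle \mathbf{F}_i(\theta,t) - \mathbf{F}_i(\theta',t),\, \theta - \theta'\rangle \ge h(t)\|\theta - \theta'\|^2 \qquad (\theta,\theta'\in X,\ t \ge 0),
\end{equation*}
and since each trajectory is continuous at the jump times of $\bsi^{(\varepsilon)}$ a one-line Gr\"onwall argument yields the pathwise bound
\begin{equation*}
\|\theta^{(\varepsilon)}_1(t) - \theta^{(\varepsilon)}_2(t)\|^2 \le \|\theta_{0,1} - \theta_{0,2}\|^2 \exp\!\Bigl(-2\int_0^t h(s)\,ds\Bigr),
\end{equation*}
which tends to $0$ uniformly in the driving realisation because $\int_0^\infty h(s)\,ds = \infty$.

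Second I would lift this pathwise statement to the Wasserstein distance. Raising the bound to the power $q/2$ and truncating by $1$ we dominate by the constant $1$; dominated convergence applied to the joint law of $(\theta_{0,1},\theta_{0,2},\bsi^{(\varepsilon)})$ then gives
\begin{equation*}
\mathrm{d}_{\rm W}\!\left(\mu_0 K^{(\varepsilon)}_{t|0},\ \mu_0' K^{(\varepsilon)}_{t|0}\right) \longrightarrow 0 \qquad (t \to \infty)
\end{equation*}
for any two such initial laws, so that if $\mu_\varepsilon$ exists it is automatically unique and attracts every $\mu_0 K^{(\varepsilon)}_{t|0}$.

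Finally I would establish existence of $\mu_\varepsilon$ via a Cauchy argument in $\mathrm{d}_{\rm W}$. Writing $\mu_0 K^{(\varepsilon)}_{t+s|0} = \bigl(\mu_0 K^{(\varepsilon)}_{s|0}\bigr)\, K^{(\varepsilon)}_{t+s|s}$ and noting that the rate matrix $B(u,\varepsilon) \to A(-\log\varepsilon)$ as $u\to\infty$, the time-shifted operator $K^{(\varepsilon)}_{t+s|s}$ becomes arbitrarily close, as $s\to\infty$, to the time-homogeneous transition operator $\widetilde{K}_t$ of the CTMP with rate $A(-\log\varepsilon)$; combined with the exponential contraction of the previous step this makes $\{\mu_0 K^{(\varepsilon)}_{t|0}\}_{t\ge 0}$ Cauchy in $\mathrm{d}_{\rm W}$. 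Completeness of the Wasserstein space provides the limit $\mu_\varepsilon$, which is independent of $\mu_0$ by the contraction estimate, and a short semigroup argument shows $\mu_\varepsilon \widetilde{K}_t = \mu_\varepsilon$ for every $t\ge 0$. The main obstacle is precisely this last existence step: because $(\theta^{(\varepsilon)}(t))_{t\ge 0}$ is genuinely time-inhomogeneous for every finite $t$, one cannot simply invoke a classical time-homogeneous ergodic theorem, and the Cauchy estimate must quantitatively balance the decay of $\|B(\cdot,\varepsilon) - A(-\log\varepsilon)\|$ against the accumulated contraction factor $\exp\!\bigl(-2\int_0^t h\bigr)$.
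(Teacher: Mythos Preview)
Your first two steps --- synchronous coupling through a shared index process, the pathwise Gr\"onwall bound, and its lift to $\mathrm{d}_{\rm W}$ --- coincide with the paper's proof almost verbatim. The divergence is only in the existence step for $\mu_\varepsilon$. The paper's route here is much shorter: it sets $\theta_\dagger^{(\varepsilon)}(t) := \theta^{(\varepsilon)}(t+\delta)$, reads the contraction estimate directly as $\mathrm{d}_{\rm W}\bigl(\mu_0 K_{t|0}^{(\varepsilon)}, \mu_0 K_{t+\delta|0}^{(\varepsilon)}\bigr) \to 0$, concludes that $(\mu_0 K_{\delta n|0}^{(\varepsilon)})_{n\ge 0}$ is Cauchy, and obtains $\mu_\varepsilon$ from completeness of the Wasserstein space --- with no reference to a limiting time-homogeneous semigroup and no quantitative balancing of decay rates. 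Your route via $K_{t+s|s}^{(\varepsilon)} \to \widetilde{K}_t$ is heavier but buys the explicit identification of $\mu_\varepsilon$ as the stationary law of the limiting homogeneous dynamics, together with the invariance $\mu_\varepsilon \widetilde{K}_t = \mu_\varepsilon$, which the paper does not state. Your instinct that time-inhomogeneity makes the Cauchy step delicate is not misplaced, however: the contraction in step~1 is derived for two copies driven by the \emph{same} index path, whereas the shifted process $\theta^{(\varepsilon)}(\cdot+\delta)$ is driven by $\bsi^{(\varepsilon)}(\cdot+\delta)$, so the paper's application of the bound at this point is itself somewhat informal --- precisely the issue your asymptotic-homogeneity argument is designed to handle.
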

\begin{proof}
1. Let $\mu_0, \mu_0^\dagger$ be to different initial distributions. Moreover, let $(\theta^{(\varepsilon)}(t))_{t \geq 0}$, $(\theta_{\dagger}^{(\varepsilon)}(t))_{t \geq 0}$ be two realisations of the stochastic approximation process with $\mu_0 = \mathbb{P}(\theta^{(\varepsilon)}(0) \in \cdot)$ and $\mu_0^\dagger = \mathbb{P}(\theta_{\dagger}^{(\varepsilon)}(0) \in \cdot)$. Furthermore, we assume that the processes are coupled through the associated index processes. Indeed, we assume that $(\bsi^{(\varepsilon)}(t))_{t \geq 0}$ and $(\bsi^{(\varepsilon)}_{\dagger}(t))_{t \geq 0}$ are almost surely identical.
Then, 
$$\mathrm{d}_{\rm W}(\mu_{0}K_{t|0}^{(\varepsilon)}, \mu_{0}^\dagger K_{t|0}^{(\varepsilon)})^2 \leq \mathbb{E}[\|\theta^{(\varepsilon)}(t) -  \theta^{(\varepsilon)}_{\dagger}(t)\|^2] \qquad (t \geq 0).$$
Assumption~\ref{Assum_conv}(ii) implies that 
\begin{align*}
    \frac{\mathrm{d}\|\theta^{(\varepsilon)}(t) -  \theta^{(\varepsilon)}_{\dagger}(t)\|^2}{\mathrm{d}t} = 2  \langle \mathbf{F}_{i}(\theta^{(\varepsilon)}(t))-  \mathbf{F}_{i}(\theta_{\dagger}^{(\varepsilon)}(t)), \theta^{(\varepsilon)}(t) - \theta_{\dagger}^{(\varepsilon)}(t) \rangle_X \leq - 2h(t) \|\theta^{(\varepsilon)}(t) - \theta_{\dagger}^{(\varepsilon)}(t)\|^2
\end{align*}
Now, the Gr\"onwall inequality implies that 
$$
\|\theta^{(\varepsilon)}(t) -  \theta^{(\varepsilon)}_{\dagger}(t)\|^2 \leq \exp\left(-2\int_{0}^t h(t) \mathrm{d}t\right) \|\theta^{(\varepsilon)}(0) - \theta_{\dagger}^{(\varepsilon)}(0)\|^2.
$$
Taking expectations on both sides, we obtain:
\begin{equation} \label{eq_contraction}
    \mathrm{d}_{\rm W}(\mu_{0}K_{t|0}^{(\varepsilon)}, \mu_{0}^\dagger K_{t|0}^{(\varepsilon)})^2 \leq \exp\left(-2\int_{0}^t h(t) \mathrm{d}t\right) \mathbb{E}\left[ \|\theta^{(\varepsilon)}(0) - \theta_{\dagger}^{(\varepsilon)}(0)\|^2
\right]
\end{equation}

which converges to 0 as $t \rightarrow \infty$ by Assumption~\ref{Assum_conv}(ii). 

2. To show the assertion of the theorem, we now choose some $\delta > 0$ and consider the process  $\theta_{\dagger}^{(\varepsilon)}(t):= \theta^{(\varepsilon)}(t+\delta)$. The contraction property \eqref{eq_contraction} implies that
$$
\mathrm{d}_{\rm W}(\mu_{0}K_{t|0}^{(\varepsilon)}, \mu_{0} K_{t+\delta|0}^{(\varepsilon)}) \rightarrow 0, \qquad (t \rightarrow \infty).$$
Thus, the sequence $(\mu_{0}K_{\delta n|0}^{(\varepsilon)})_{n=0}^\infty$ is a Cauchy sequence on the Wasserstein space associated to $\mathrm{d}_{\rm W}$. Thus, due to the completeness of Wasserstein spaces, see \cite{Bolley2008}, we have that $\mu_{0}K_{t|0}$ converges to some probability distribution $\mu_\varepsilon$ as $t \rightarrow \infty$ in $\mathrm{d}_{\rm W}$. Again due to the contraction given in \eqref{eq_contraction}, $\mu_\varepsilon$ does not depend on the initial distribution $\mu_0$ and is, thus, unique.
\end{proof}
  In the second auxiliary result, we show that  $(\theta^{(\varepsilon)}(t))_{t \geq 0}$ converges to $(\theta(t))_{t \geq 0}$, as $\varepsilon \downarrow 0$, and that $\pi^\varepsilon$ converges to $\delta(\cdot - \theta^*)$. This result is a small extension of Proposition 4 in \cite{Latz2021} and the proof proceeds identically.
\begin{proposition} \label{aux_prop} Let Assumption~\ref{Assum_conv}(i)-(ii) hold. Then, we have
\begin{enumerate}
    \item  
    $\mathrm{d}_{\rm W}(\mu_0K_{t|t_0}^{(\varepsilon)},\mu_0K_{t|t_0}^{(\varepsilon)}) \leq \alpha'(\varepsilon)$ $(\varepsilon > 0)$ for any initial distribution $\mu_0$,
    \item $\mathrm{d}_{\rm W}(\mu_\varepsilon,\delta(\cdot - \theta^\varepsilon) \leq \alpha''(\varepsilon)$ $(\varepsilon > 0)$,
\end{enumerate}
where $\alpha', \alpha'': [0,\infty) \rightarrow [0,\infty)$ are continuous and equal to 0 at 0.
\end{proposition}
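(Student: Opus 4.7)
The plan is to prove both bounds by following Proposition~4 of \cite{Latz2021}, combining a synchronous coupling of the two index processes with the Lipschitz continuity of the $\mathbf{F}_i$ and the contraction estimate already used in Lemma~\ref{aux_lemma}. The parameter $\varepsilon$ enters only through the CTMP: $B(t,\varepsilon) = A(-\log(\varepsilon + e^{-t}))$ is $A$ evaluated at a shifted time, and the shift $\tau(t,\varepsilon) := -\log(\varepsilon + e^{-t}) - t = -\log(1+\varepsilon e^{t})$ vanishes as $\varepsilon \downarrow 0$ pointwise in $t$. The key observation is that for any fixed $t$ the rate matrices $B(t,\varepsilon)$ and $A(t)$ agree in the limit, while for $t$ large they freeze at $A(-\log\varepsilon)$, so the perturbed chain remains uniformly bounded in its jump intensity.

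For part~(1), I would couple $(\bsi^{(\varepsilon)}(t))_{t\geq 0}$ and $(\bsi(t))_{t\geq 0}$ via a Gillespie-type construction on a common probability space: draw the same sequence of target states (both chains have the same one-step transition probabilities given a jump, since both rate matrices are of the form \eqref{eq_transition_rate_mat}) and let only the waiting times differ through the integrated rates $\int_0^{\cdot}\eta(u+t_0)^{-1}\mathrm{d}u$ vs.\ $\int_0^{\cdot}\eta(-\log(\varepsilon+e^{-u-t_0}))^{-1}\mathrm{d}u$. The mismatch between these clocks is explicitly controlled by $\varepsilon$. Between jumps both $\theta$-processes follow the same ODE but over slightly different time intervals, so Lipschitz continuity of each $\mathbf{F}_i$ yields a local bound $\|\theta^{(\varepsilon)}(t)-\theta(t)\| \leq C\beta(\varepsilon)$ on that interval, and Assumption~\ref{Assum_conv}(ii) together with Gr\"onwall prevents blow-up of this error on the full horizon (the integrated contraction $\exp(-2\int_0^t h)$ absorbs the accumulation). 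Taking expectations and applying the definition of $\mathrm{d}_{\rm W}$ gives $\alpha'(\varepsilon) \to 0$.

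For part~(2), I would identify $\theta^{\varepsilon}$ as the stationary point of the frozen-time averaged flow associated with the effective learning rate $\eta(-\log\varepsilon)$ that governs the asymptotic regime of $B(\cdot,\varepsilon)$. Since $\mu_\varepsilon K^{(\varepsilon)}_{t|t_0} = \mu_\varepsilon$ for every $t\geq t_0$, I evolve both $\mu_\varepsilon$ and $\delta(\cdot-\theta^{\varepsilon})$ forward: the fast switching of $\bsi^{(\varepsilon)}$ (whose rates are bounded away from $0$ and $\infty$ on $[t_0,\infty)$ for fixed $\varepsilon$) produces an effective drift close to $-\hPsi$, while the contraction property of $\hPsi$ drives both measures toward $\theta^{\varepsilon}$. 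Estimating the residual averaging error, which is of order $\eta(-\log\varepsilon)$, yields the quantitative bound $\alpha''(\varepsilon)$ and continuity at $\varepsilon=0$ follows from $\eta(-\log\varepsilon) \to 0$.

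The main obstacle I expect is the quantitative coupling of the time-inhomogeneous CTMPs over an unbounded horizon: standard total-variation couplings of time-homogeneous Markov chains do not apply, so the bookkeeping of how the two Gillespie clocks drift apart must be combined delicately with the ODE contraction so that the bound does not accumulate in $t$. The saving grace is exactly the exponential contraction $\exp(-2\int_0^t h(u)\,\mathrm{d}u)$ inherited from Assumption~\ref{Assum_conv}(ii), which damps the spatial effect of each desynchronised jump fast enough to keep the Wasserstein error uniformly small in $t$ and of order $\alpha'(\varepsilon)$.
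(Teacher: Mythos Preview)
Your proposal is essentially what the paper does: the paper does not give an independent proof but simply states that the result ``is a small extension of Proposition~4 in \cite{Latz2021} and the proof proceeds identically,'' which is exactly the route you outline. One small clarification: your identification of $\theta^{\varepsilon}$ as the stationary point of a ``frozen-time averaged flow'' is harmless but slightly overcomplicated---since the stationary distribution of the CTMP on $I$ is uniform regardless of the learning rate, the averaged drift is always $\hPsi$ and hence $\theta^{\varepsilon}=\theta^*$ for every $\varepsilon$ (consistent with how the paper uses the bound in the proof of Theorem~\ref{thm_gen_Latz21}).
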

The proof of Theorem~\ref{thm_gen_Latz21} now consists in a simple rearrangement of the auxiliary results above.
\begin{proof}[Proof of Theorem \ref{thm_gen_Latz21}]
By the triangular inequality, we have 
\begin{align*}
    \mathrm{d}_{\rm W}&\left(\delta(\cdot - \theta^*), \mathbb{P}(\theta(t) \in \cdot | \theta_0, i_0)\right)  \\ &\leq  \mathrm{d}_{\rm W}\left(\delta(\cdot - \theta^*), \mu_\varepsilon\right) + \mathrm{d}_{\rm W}(\mu_{\varepsilon}, \mathbb{P}(\theta^{(\varepsilon)}(t)) \in \cdot)) + \mathrm{d}_{\rm W}(\mathbb{P}(\theta^{(\varepsilon)}(t)) \in \cdot), \mathbb{P}(\theta(t)) \in \cdot)) 
\end{align*}
where the last term in the sum is identical to $\mathrm{d}_{\rm W}(\mu_0K_{t|t_0}^{(\varepsilon)},\mu_0K_{t|t_0}^{(\varepsilon)})$. By Lemma~\ref{aux_lemma} and Proposition~\ref{aux_prop}, we have $\mathrm{d}_{\rm W}\left(\delta(\cdot - \theta^*),\mathbb{P}(\theta(t) \in \cdot | \theta_0, i_0)\right)  \rightarrow 0$, as $t \rightarrow \infty$.
\end{proof}

\begin{lemma}[Lemma \ref{thm:Expconv}]\label{thm:Expconv_app}
Given the $\Nsub$ subsamples $\left(\tilde y_{(1)}^\dagger, \ldots, \tilde y_{(\Nsub)}^\dagger\right)$, assume that the centered initial ensemble is a generator of the full space $X$, i.e. $\spann{\{e_0^{(j)},j \in J\}}=X$. We further assume that $\sum_{j=1}^\Nens (\theta^\dagger_j-\bar \theta^\dagger)(\theta^\dagger_j-\bar \theta^\dagger)^\top$ has full rank $d$. Then the particles converge to the true solution $\theta^\dagger$ exponentially fast, i.e. $\theta^{(j)}\to \theta^\dagger_i$ with $\theta^\dagger_i$ denoting the minimiser of $\Phi_i(\theta)=\frac12 \|\tilde A_i \theta - \tilde y_i\|^2$.
\end{lemma}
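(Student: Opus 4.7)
The plan is to introduce residual coordinates centred at the particle-specific minimisers. With the batch-subsampling assignment $i(j)$ fixed for each particle $j$, set $r^{(j)}(t) := \theta^{(j)}(t) - \theta^\dagger_{i(j)}$ and $B_j := \tilde A_{i(j)}^\top \tilde A_{i(j)}$. Since $\theta^\dagger_{i(j)} = B_j^{-1}\tilde A_{i(j)}^\top \tilde y_{i(j)}$ is the exact minimiser and the affine part is annihilated, the dynamics reduces to the linear, time-varying ODE
$$\dot r^{(j)}(t) = -\widehat{C}_t\, B_j\, r^{(j)}(t), \qquad j\in J.$$
The regularisation block $(\alpha/\Nsub)^{1/2}C_0^{1/2}$ inside $\tilde A_{i(j)}$ ensures $B_j$ is symmetric positive definite, with $\lambda_{\min}(B_j) \geq \alpha \lambda_{\min}(C_0)/\Nsub$ uniformly in $j$.

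Next, I would take the weighted Lyapunov functional
$$V(t) := \sum_{j\in J}\langle r^{(j)}(t),\, B_j r^{(j)}(t)\rangle = \sum_{j\in J}\|\tilde A_{i(j)} r^{(j)}(t)\|^2,$$
and differentiate along the flow:
$$\dot V(t) = -2\sum_{j\in J}\langle B_j r^{(j)},\, \widehat{C}_t B_j r^{(j)}\rangle \leq -2\lambda_{\min}(\widehat{C}_t)\cdot \min_{j\in J}\lambda_{\min}(B_j)\cdot V(t),$$
where I used $\|B_j r\|^2 \ge \lambda_{\min}(B_j)\langle r, B_j r\rangle$. Gr\"onwall's inequality then yields
$$V(t) \leq V(0)\exp\!\Big(-2\min_{j}\lambda_{\min}(B_j)\int_0^t \lambda_{\min}(\widehat{C}_s)\,\mathrm{d}s\Big),$$
so exponential convergence $\|r^{(j)}(t)\|=O(e^{-\kappa t})$ for some $\kappa>0$ will follow the moment one can lower-bound $\lambda_{\min}(\widehat{C}_s)$ by a strictly positive constant uniformly in $s$.

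The heart of the argument — and the main obstacle — is precisely this uniform lower bound on $\lambda_{\min}(\widehat{C}_t)$, and both hypotheses of the lemma are tailored for it. The initial-spread hypothesis $\spann\{e_0^{(j)}\}=X$ gives $\lambda_{\min}(\widehat{C}_0)>0$, and the full-rank assumption on $\widehat{C}^\dagger := (\Nens{-}1)^{-1}\sum_j(\theta^\dagger_{i(j)}-\bar\theta^\dagger)(\theta^\dagger_{i(j)}-\bar\theta^\dagger)^\top$ means the attractor itself has a non-degenerate spread. My plan is a bootstrap/continuity argument: (i) the monotonicity $\dot V \leq 0$ from the previous paragraph already confines the trajectory to the bounded set $\{V\le V(0)\}$; (ii) write $\widehat{C}_t = \widehat{C}^\dagger + R_t$ where $\|R_t\| = O(V(t)^{1/2})$ via a direct expansion of $\widehat{C}_t$ in the residuals $r^{(j)}$ and $\bar r$, so that once $V$ is small enough one has $\lambda_{\min}(\widehat{C}_t)\geq \tfrac12\lambda_{\min}(\widehat{C}^\dagger)$; (iii) combined with $\dot V\le 0$, this neighbourhood is forward-invariant and the Lyapunov estimate closes, giving exponential decay on it.

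The genuinely delicate step is then transferring this local exponential stability at $\Theta^\dagger := (\theta^\dagger_{i(j)})_{j\in J}$ into a global conclusion, i.e.~showing that the trajectory actually enters the attracting neighbourhood. My fallback is to exploit the batch-subsampling analogue of the subspace property — because each velocity $\dot\theta^{(j)}=-\widehat{C}_t B_j r^{(j)}$ lies in $\spann\{e^{(k)}(t):k\in J\}$, this span is invariant under the flow — together with the hypothesis $\spann\{e_0^{(j)}\}=X$, which persistently generates $X$ and keeps $\widehat{C}_t$ of rank $d$ for all time. Quantifying $\lambda_{\min}(\widehat{C}_t)$ on the compact trajectory (say via a Liouville-type estimate on $\det\widehat{C}_t$, or by showing that any accumulation point of $\Theta(t)$ must be a fixed point and hence $\Theta^\dagger$) yields the integral divergence $\int_0^t \lambda_{\min}(\widehat{C}_s)\,\mathrm ds \to \infty$ linearly, so that the Gr\"onwall bound on $V$ closes and exponential convergence $\theta^{(j)}(t)\to \theta^\dagger_{i(j)}$ follows for every $j\in J$.
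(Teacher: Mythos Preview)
Your Lyapunov functional $V=\sum_j\langle r^{(j)},B_j r^{(j)}\rangle$ coincides with the paper's (the paper works with $\|g^{(j)}\|^2_{\tilde A_{i(j)}^\top\tilde A_{i(j)}}$ per particle, and since $g^{(j)}=B_jr^{(j)}$ and $\|\cdot\|^2_\Gamma=\langle\cdot,\Gamma^{-1}\cdot\rangle$ this is exactly $\langle r^{(j)},B_jr^{(j)}\rangle$), and both arguments rest on the same two pillars: rank preservation of $\widehat C_t$ and the full-rank assumption on the limit covariance $\widehat C^\dagger$. The difference is organisational. The paper proceeds in two clean steps: first it observes that $\widehat C_t$ satisfies a linear matrix ODE $\dot{\widehat C}=D\widehat C+\widehat C D^\top$, so its rank is constant and $\{e^{(k)}(t)\}$ spans $X$ for all $t$; this makes $\dot V<0$ strictly whenever $g^{(j)}\neq 0$, giving convergence $\theta^{(j)}\to\theta^\dagger_{i(j)}$ (a LaSalle-type argument). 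Only then does it invoke the full-rank limit hypothesis to conclude $\lambda_{\min}(\widehat C_t)$ is bounded below uniformly, closing the Gr\"onwall estimate for the exponential rate. Your bootstrap (local exponential stability near $\Theta^\dagger$, then a separate global argument to reach that neighbourhood) reaches the same conclusion via the same ingredients, but the paper's separation of ``convergence first, rate second'' avoids the hedged fallbacks (Liouville-type determinant bounds, accumulation-point reasoning) that you flag as delicate.
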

\begin{proof} 
Note that we do not switch the data. Therefore, the subset that each particle obtains does not depend on time.\\
The gradients $g^{(j)}=\nabla \Phi_{(\bsi;j)}(\theta)=\tilde A_{(\bsi;j)}^\top (\tilde A_{(\bsi;j)} \theta^{(j)}- \tilde y_{(\bsi;j)})$ satisfy
\begin{align}\nonumber
\frac{\mathrm{d} \tilde A_{(\bsi;j)}^\top \tilde A_{(\bsi;j)} u^{(j)}(t)}{\mathrm{d}t} &= - \tilde A_{(\bsi;j)}^\top \tilde A_{(\bsi;j)} \hat{C}_t \tilde A_{(\bsi;j)} ^\top (\tilde A_{(\bsi;j)} \theta^{(j)}(t)- \tilde y_{(\bsi;j)})\,,
\end{align}

We will prove in the following that $g^{(j)}\to 0$ exponentially fast as $t\to\infty$. This is a sufficient and necessary optimality condition as $A_{\bsi(;j)}^\top \tilde A_{(\bsi;j)}$ is positive definite due to regularisation. We obtain
\begin{align}
\frac12\frac{\mathrm{d} \|g^{(j)}\|^2_{\tilde A_{(\bsi;j)}^\top \tilde A_{(\bsi;j)}}}{\mathrm{d}t} &= - \frac 1\Nens \sum_{k=1}^\Nens \langle e^{(k)},g^{(j)}\rangle^2\le 0, \label{eqn:subspaceineq}
\end{align}
i.e. the gradients are monotonically decreasing. To prove convergence, we will now show that $\frac12\frac{\mathrm{d}}{\mathrm{d}t} \|g^{(j)}\|^2_{\tilde A_{(\bsi;j)}^\top \tilde A_{(\bsi;j)}} <0$.  Note that, if $g^{(j)}\neq 0$ and $\{e^{(k)}\}_{k=1}^\Nens$ is still a generating set of $X$ at time $t$, then there exists at least one $k\in\{1,\ldots,\Nens\}$ such that $\langle e^{(k)},g^{(j)}\rangle\neq 0$.
The quantity $e^{(j)}$ satisfies 
\begin{align*}
\frac{\mathrm{d} e^{(j)}(t)}{\mathrm{d}t} &= - \widehat{C}_t v^{(j)} =-\frac 1\Nens \sum_{k=1}^\Nens \langle v^{(j)} , e^{(k)}\rangle e^{(k)}\,,
\end{align*} 
with $v^{(j)}=g^{(j)}-\bar g$. Thus the dynamical behavior of empirical covariance is given by
\begin{equation*}
    \frac{\mathrm{d} }{\mathrm{d}t} \hat C=D\hat C+\hat C D^\top
    \end{equation*}
with $D=-\frac1\Nens \sum_{j=1}^\Nens v^{(j)} \otimes e^{(j)}$.
 
Therefore, the rank of the empirical covariance stays constant over time (cp. \cite{Reid}) and the members $\{e^{(j)}\}_{j=1}^\Nens$ still form a generating set of $X$ at time $t$.\\
Then, there exists at least one $k$ in \eqref{eqn:subspaceineq} such that $\langle e^{(k)},g^{(j)}\rangle^2\neq 0$, i.e. the gradients converge to $0$.\\
This implies the convergence $\theta^{(j)}\to \theta^\dagger_j$ due to the strong convexity. 
By assumption, the limit of the empirical covariance has full rank $d$, 
since $\theta_j\to\theta^\dagger_j$, i.e. the minimal eigenvalue $\lambda_{\min}(\hat{C}_t)$ of the empirical covariance is bounded from below uniformly in time.
Thus, we have
\begin{align*}
\frac12\frac{\mathrm{d} \|g^{(j)}\|^2_{\tilde A_{(\bsi;j)}^\top \tilde A_{(\bsi;j)}}}{\mathrm{d}t} &= -  \langle g^{(j)}, C(t)g^{(j)}\rangle \le -\frac{\lambda_{\min}}{\lambda_{\max}((\tilde A_{\bsi;}^\top \tilde A_{\bsi})^{-1})}\|g^{(j)}\|^2_{\tilde A_{(\bsi;j)}^\top \tilde A_{(\bsi;j)}}\,,
\end{align*}
where $\lambda_{\min}>0$ denotes the lower bound on the minimal eigenvalue of the empirical covariance.\\

\end{proof}

\begin{lemma} \label{lemma:convex_vi_ss_app}
The particles $\theta^{(j)}$ converge at exponential speed to the unique solution of the regularised data misfit, i.e. $\rho^{(j)}(t)\to 0$. Hence, there exists a (unique) $\theta_j^\dagger\in S=span\{\theta^{(1)}_0,...,\theta^{(J)}_0\}$ such that $\theta^{(j)}\to \theta_j^\dagger$.\\

Furthermore, let $\theta_1$ and $\theta_2$ be two coupled process with different initial values $\theta_1(0),\theta_2(0).$ Then there exists a measurable function $h: [0, \infty) \rightarrow \mathbb{R}$ such that the following holds

\begin{align*}
-\langle \theta_1 -  \theta_2, (\widehat{C}_t^1+\alpha_{vi}C_{vi}) D_\theta \Phi_{{i'}(t,\varepsilon;\cdot)}( \theta_1) - (\widehat{C}_t^1+\alpha_{vi}C_{vi}) D_\theta \Phi_{{i'}(t,\varepsilon;\cdot)}( \theta_2) \rangle \leq -h(t) \| \theta_1 - \theta_2\|^2,
\end{align*}
 for $t$ large enough. We have
 \begin{enumerate}
     \item Single-Subsampling with variance inflation:
     
     \hspace{1cm}
     $h(t)=\alpha_{vi}\lambda_{\min}(C_{vi})\min_{i\in\{1,...,\Nsub\}}\lambda_{\min}(\tilde A_{\bsi}^T \tilde A_{\bsi}).$
     \item Batch-Subsampling with variance inflation:
     
     \hspace{1cm} $h(t)=\alpha_{vi}\lambda_{\min}(C_{vi})\min_{i\in\{1,...,\Nsub\}}\lambda_{\min}(A_{\bsi}^TA_{\bsi})$
     \item Single-Subsampling without variance inflation:
     
     \hspace{1cm} $h(t)=\lambda_{\min}(\widehat C_t)\min_{i\in\{1,...,\Nsub\}}\lambda_{\min}(\tilde A_{\bsi}^T \tilde A_{\bsi}).$
     \item Batch-Subsampling with diminishing variance inflation:
     
     \hspace{1cm} $h(t)=\frac{\alpha_{vi}}{t}\lambda_{\min}(C_{vi})\min_{i \in\{1,...,\Nsub\}}\lambda_{\min}(A_{\bsi}^TA_{\bsi}).$
 \end{enumerate}

\end{lemma}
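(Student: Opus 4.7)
The plan is to exploit the quadratic form of each regularised potential $\Phi_i^{\rm reg}$ and then reduce the target contraction inequality to an eigenvalue estimate for the product of two symmetric positive (semi)definite matrices. Since $\Phi_i^{\rm reg}(\theta)=\tfrac12\|\tilde A_i\theta-\tilde y_i\|^2$, direct differentiation yields $D_\theta\Phi_i^{\rm reg}(\theta_1)-D_\theta\Phi_i^{\rm reg}(\theta_2)=\tilde A_i^\top\tilde A_i(\theta_1-\theta_2)$. Setting $v:=\theta_1-\theta_2$, the target inequality reduces in each of the four cases to a bound of the form $\langle v,P(t)Q_iv\rangle\ge h(t)\|v\|^2$, where $P(t)$ denotes the (possibly inflated) preconditioner and $Q_i$ equals $\tilde A_i^\top\tilde A_i$ or $A_i^\top A_i$ depending on whether the regularisation is already absorbed into the forward operator.

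The crucial ingredient will be a weighted-norm reformulation. Since $P(t)$ is symmetric positive definite in cases (1), (2) and (4), and also on the ensemble subspace in case (3), the weighted inner product $\langle u,w\rangle_{P^{-1}(t)}:=\langle u,P^{-1}(t)w\rangle$ is well-defined and $P(t)Q_i$ is self-adjoint with respect to it. A direct computation of $\tfrac{d}{dt}\tfrac12\|v(t)\|^2_{P^{-1}(t)}$ along the flow produces a principal term $-\langle v,Q_iv\rangle\le -\lambda_{\min}(Q_i)\|v\|^2$ together with a correction arising from $\tfrac{d}{dt}P^{-1}(t)$. The norm equivalence $\lambda_{\min}(P(t))\|v\|^2_{P^{-1}(t)}\le\|v\|^2\le\lambda_{\max}(P(t))\|v\|^2_{P^{-1}(t)}$ then translates the estimate back to the Euclidean norm, giving the claimed lower bound on $\langle v,P(t)Q_iv\rangle$ once the correction term is absorbed.

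In each of the four cases an a priori lower bound on $\lambda_{\min}(P(t))$ determines $h(t)$: case (1) uses $P(t)\succeq\alpha_{vi}C_{vi}$, yielding $\lambda_{\min}(P(t))\ge\alpha_{vi}\lambda_{\min}(C_{vi})$; case (2) is analogous with $Q_i=A_i^\top A_i$, the strict positivity of $P(t)$ being ensured by $C_{vi}$ alone; case (3) replaces $P(t)$ by $\widehat C_t$ and relies on the algebraic decay $\lambda_{\min}(\widehat C_t)\in\mathcal{O}(t^{-1})$ recorded in Appendix~\ref{lemma:eig_ss_app}; case (4) uses $P(t)\succeq\tfrac{\alpha_{vi}}{1+t}C_{vi}$. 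In all four cases one has $\int_0^\infty h(t)\,dt=\infty$, either trivially (constant $h$) or because the resulting integral is of harmonic type. The first assertion of the lemma, exponential decay of $\rho^{(j)}(t)$ and uniqueness of the limit $\theta_j^\dagger\in S$, then follows by applying Gr\"onwall's inequality to the contraction estimate with the stationary point of the preconditioned regularised gradient flow chosen as one of the two trajectories; existence and uniqueness of that stationary point on $S$ are guaranteed by the Banach fixed-point theorem, since $\Phi_i^{\rm reg}$ is strictly convex on $S$.

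The main obstacle is that $\langle v,PQv\rangle$ in the standard Euclidean inner product is not in general bounded below by $\lambda_{\min}(P)\lambda_{\min}(Q)\|v\|^2$, because the symmetric part $\tfrac12(PQ+QP)$ need not be positive semidefinite even for symmetric positive definite $P,Q$ that do not commute. This is what forces the weighted-norm viewpoint and requires a careful bookkeeping of $\tfrac{d}{dt}P^{-1}(t)$: in case (3) the time-derivative of the empirical covariance has to be controlled uniformly enough so as not to destroy the divergence of $\int_0^\infty h(t)\,dt$, and in case (4) the $(1+t)^{-1}$ scaling of the variance inflation is precisely chosen so that the induced correction stays subdominant while $h$ itself remains non-integrable.
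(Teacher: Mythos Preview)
Your approach differs substantially from the paper's, and in one place is more careful, but it also drifts away from what the lemma actually asserts.

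The paper argues in the opposite order from you. It \emph{first} establishes exponential decay of the gradients $\tilde A_i^\top\rho^{(j)}$ by a direct Lyapunov computation,
\[
\tfrac12\tfrac{d}{dt}\|\tilde A_i^\top\rho^{(j)}\|^2_{\tilde A_i^\top\tilde A_i}\le -\alpha_{vi}\,\frac{\lambda_{\min}(C_{vi})}{\lambda_{\max}(\tilde A_i^\top\tilde A_i)}\,\|\tilde A_i^\top\rho^{(j)}\|^2_{\tilde A_i^\top\tilde A_i},
\]
obtaining an a priori rate $\|\theta^{(j)}-\theta_j^\dagger\|\le\kappa(t)$ with $\kappa(t)\to0$ exponentially. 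You instead propose to deduce this from the contraction inequality via Gr\"onwall, which reverses the logical dependence.

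For the contraction inequality, the paper in fact treats the two coupled processes as carrying \emph{distinct} empirical covariances $\mathbf C_t^1,\mathbf C_t^2$ (despite only $\widehat C_t^1$ appearing in the displayed statement). It adds and subtracts $\mathbf C_t^1D_\theta\Phi(\theta_2)$ to split the left-hand side into a principal term $-\langle v,\mathbf C_t^1\tilde A^\top\tilde A v\rangle$ and a cross term $-\langle v,(\mathbf C_t^1-\mathbf C_t^2)\tilde A^\top(\tilde A\theta_2-\tilde y)\rangle$. The a priori rate $\kappa(t)$ is then used to show $\|\mathbf C_t^1-\mathbf C_t^2\|=O(\kappa(t))$ and $\|\tilde A^\top(\tilde A\theta_2-\tilde y)\|=O(\kappa(t))$, so the cross term is $O(\kappa(t)^3)$ and is dominated by the principal $O(\kappa(t)^2)$ term for large $t$. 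This is precisely the origin of the qualifier ``for $t$ large enough'' in the statement; your proposal has no analogue of this splitting, so that qualifier remains unexplained in your argument.

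For the principal term the paper simply writes $\langle v,\mathbf C_t^1\tilde A^\top\tilde A v\rangle\ge\lambda_{\min}(\mathbf C_t^1)\lambda_{\min}(\tilde A^\top\tilde A)\|v\|^2$ without further justification. You are right that this product-of-eigenvalues bound is not automatic for non-commuting SPD matrices. However, your proposed remedy---computing $\tfrac{d}{dt}\tfrac12\|v\|^2_{P^{-1}(t)}$ and translating back via norm equivalence---does not prove the pointwise Euclidean inequality the lemma states; it yields a Gr\"onwall-type contraction in a time-dependent equivalent norm. That is adequate for the downstream convergence theorems, but it is a different statement, and the bookkeeping for $\tfrac{d}{dt}P^{-1}(t)$ that you allude to is neither carried out nor obviously subdominant in case~(3).
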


\begin{proof}

    $1.$ We first consider single subsampling with variance inflation

The gradients $\tilde A_{(\bsi;j)}^\top \rho^{(j)}$ for a constant (w.r. to time and particle) data stream satisfy the following differential equation for all subsets $\bsi$.
\begin{align*}
\frac{\mathrm{d} \tilde A_{\bsi}^\top \rho^{(j)}(t)}{\mathrm{d}t} &= - \tilde A_{\bsi}^\top \tilde A_{\bsi}[\widehat{C}_t+\alpha_{\rm vi} C_{\rm vi}] \tilde A_{\bsi}^\top \rho^{(j)}(t).
\end{align*}
The norm of the gradients thus satisfies
\begin{align*}
\frac12 \frac{\mathrm{d} \|\tilde A_{\bsi}^\top \rho^{(j)}(t)\|_{\tilde A_{\bsi}^\top \tilde A_{\bsi}}^2}{\mathrm{d}t} &\le - \alpha_{\rm vi}\frac{\lambda_{\min}(C_{\rm vi})}{\lambda_{\max}(\tilde A_{\bsi}^\top \tilde A_{\bsi})} \|\rho^{(j)}(t)\|_{\tilde A_{\bsi}^\top \tilde A_{\bsi}}^2\,,
\end{align*}
which implies the exponential convergence of the mapped residuals and with the injectivity of the modified forward operator the exponential convergence in the parameter space to the (unique) solution of the regularised data misfit.\\

Therefore, we have $\theta^{(j)}\to \theta^\dagger_j$.\\
Then $\theta^\dagger_j$ is an equilibrium point of
$$\mathbf{F}_i(\theta^{(j)},t)=(\widehat C_t+\alpha_{vi}C_{vi}) D_\theta \Phi^{\mbox{\rm reg}}_{\bsi}(\theta^{(j)}(t))\qquad (j \in J)\,.$$ Hence, there exists a function $\kappa(t)\geq 0 $ for all $t>0$ that converges exponentially fast to $0$ for $t \rightarrow \infty$ such that:

$$\|\theta^{(j)}_t- P_Y\theta^\dagger_j\|\leq \kappa(t) \quad \forall t\geq0.$$





Due to linearity w.r. to the initial values, we obtain
\begin{align*}
    \|\theta(t,\theta_0)^{(j)}-\theta(t,\theta_1)^{(j)}\|&=\|\theta(t,\theta_0)^{(j)}-P_Y\theta^\dagger_j-(\theta(t,\theta_1)^{(j)}-P_Y\theta^\dagger_j)\|\leq 2 \kappa(t).
\end{align*}

The next step is to consider equation $(ii)$ from Assumption \ref{Assum_conv}. Note that $\theta_1$ and $\theta_2$ are column vectors consisting of the stacked particle vectors.

Therefore, we define the following matrices to represent the gradient flow for the stacked vector. We set:
$\tilde A=diag\{\tilde A_1,\tilde A_2,...,\tilde A_\Nsub\}, \tilde A^T=diag\{\tilde A_1^T,\tilde A_2^T,...,\tilde A_\Nsub^T\},\mathbf{C}_t=diag\{\widehat C_t+\alpha_{vi}C_{vi},\widehat C_t+\alpha_{vi}C_{vi},...,\widehat C_t+\alpha_{vi}C_{vi}\}$.\\
Then the dynamics are given by:
$$\frac{\mathrm{d}\theta}{\mathrm{d}t}=-\hat {C}\tilde A^T(\tilde A\theta-y).$$

We want to show:

$$
-\langle \theta_1 -  \theta_2, \mathbf{C}_t^1 D_\theta \Phi_{{i'}(t,\varepsilon;\cdot)}( \theta_1) - \mathbf{C}_t^2 D_\theta \Phi_{{i'}(t,\varepsilon;\cdot)}( \theta_2) \rangle \leq -h_{i'}(t) \| \theta_1 - \theta_2\|^2,
$$

We can split the left hand side into two parts:

\begin{align*}
    &-\langle \theta_1 -  \theta_2, \mathbf{C}^1_t D_\theta \Phi_{{i'}(t,\varepsilon;\cdot)}( \theta_1) - \mathbf{C}^2_t D_\theta \Phi_{{i'}(t,\varepsilon;\cdot)}( \theta_2) \rangle \\ =&-\langle \theta_1 -  \theta_2, \mathbf{C}^1_t D_\theta \Phi_{{i'}(t,\varepsilon;\cdot)}( \theta_1)-\mathbf{C}^1_t D_\theta \Phi_{{i'}(t,\varepsilon;\cdot)}( \theta_2)+\mathbf{C}^1_t D_\theta \Phi_{{i'}(t,\varepsilon;\cdot)}( \theta_2) - \mathbf{C}^2_t D_\theta \Phi_{{i'}(t,\varepsilon;\cdot)}( \theta_2) \rangle\\
    =&-\langle \theta_1 -  \theta_2, \mathbf{C}^1_t \left[D_\theta \Phi_{{i'}(t,\varepsilon;\cdot)}( \theta_1)-D_\theta \Phi_{{i'}(t,\varepsilon;\cdot)}( \theta_2)\right]\rangle\\
    &-\langle \theta_1 -  \theta_2, \left[\mathbf{C}^1_t-\mathbf{C}^2_t\right] D_\theta \Phi_{{i'}(t,\varepsilon;\cdot)}( \theta_2)\rangle
\end{align*}
Substituting the corresponding gradient flows into the equations, we obtain

\begin{align*}
    &-\langle \theta_1 -  \theta_2, \mathbf{C}^1_t \left[D_\theta \Phi_{{i'}(t,\varepsilon;\cdot)}( \theta_1)-D_\theta \Phi_{{i'}(t,\varepsilon;\cdot)}( \theta_2)\right]\rangle\\
    &-\langle \theta_1 -  \theta_2, \left[\mathbf{C}^1_t-\mathbf{C}^2_t\right] D_\theta \Phi_{{i'}(t,\varepsilon;\cdot)}( \theta_2)\rangle\\
    =&-\langle \theta_1 -  \theta_2, \mathbf{C}^1_t \tilde A^T \tilde A (\theta_1 -  \theta_2)\rangle\\
    &-\langle \theta_1 -  \theta_2, \left[\mathbf{C}^1_t-\mathbf{C}^2_t\right] \tilde A^T (\tilde A\theta_2-\tilde y)\rangle\\
\end{align*}

We consider both terms separately. For the first part we obtain

\begin{align*}
-\langle \theta_1 -  \theta_2, \mathbf{C}^1_t \tilde A^T \tilde A (\theta_1 -  \theta_2)\rangle &\leq -\lambda_{\min}(\mathbf{C}^1_t)\lambda_{\min}(\tilde A^T\tilde A)\|\theta_1 -  \theta_2\|\notag\\
&=-\lambda_{\min}(\widehat{C}^1_t+\alpha_{vi}C_{vi})\lambda_{\min}(\tilde A^T\tilde A)\|\theta_1 -  \theta_2\|\notag\\
&\leq -\alpha_{vi}\lambda_{\min}(C_{vi})\left(\min_{i\in\{1,...,\Nsub\}}\lambda_{\min}(\tilde A_{\bsi}^T\tilde A_{\bsi})\right)\|\theta_1 -  \theta_2\|^2,
\end{align*}

where we used the positive definiteness of $\widehat{C}^1_t$ for every $t\geq 0$ in the third step. For the second term we obtain

\begin{align*}
    &-\langle \theta_1 -  \theta_2, \left[\mathbf{C}^1_t-\mathbf{C}^2_t\right] \tilde A^T (\tilde A\theta_2-\tilde y)\rangle\\
    \leq & |\langle \theta_1 -  \theta_2, \left[\mathbf{C}^1_t-\mathbf{C}^2_t\right] \tilde A^T (\tilde A\theta_2-\tilde y)\rangle|\\
    \leq & \|\theta_1 -  \theta_2\| \|\mathbf{C}^1_t-\mathbf{C}^2_t\| \|\tilde A^T (\tilde A\theta_2-\tilde y)\|\\
\end{align*}

We can compare the rates of convergence. Considering the results from above we have
$$\|\theta_1 -  \theta_2\|^2 \in \mathcal{O}(\kappa(t)^2),$$

and also 

$$\|\tilde A^T (\tilde A\theta_2-\tilde y)\|\in \mathcal{O}(\kappa(t)).$$

Finally, we have for the covariance matrices
\begin{align*}
    \|\mathbf{C}^1_t-\mathbf{C}^2_t\|&=\|\frac{1}{J}\sum_{j=1}^J (u^{(j)}_1-\bar{u}_1)(u^{(j)}_1-\bar{u}_1)^T-(u^{(j)}_2-\bar{u}_2)(u^{(j)}_2-\bar{u}_2)^T\|\\
    \leq& \frac{1}{J} \sum_{j=1}^J \|u^{(j)}_1(u^{(j)}_1)^T-u^{(j)}_2(u^{(j)}_2)^T\| \\
    +& \|u^{(j)}_1(\bar{u}_1)^T-u^{(j)}_2(\bar{u}_2)^T\| \\
    +& \|\bar{u}_1 (u^{(j)}_1)^T-\bar{u}_2 (u^{(j)}_2)^T\| \\
    +& \|\bar{u}_1 (\bar{u}_1)^T-\bar{u}_2 (\bar{u}_2)^T\|
\end{align*}

Since we know that all particles converge with rate $\kappa(t)$, the mean values also converge with the same rate. By using triangle inequality we obtain the following

$$\|\mathbf{C}^1_t-\mathbf{C}^2_t\|\in \mathcal{O}(\kappa(t)).$$
Hence

$$\|\theta_1 -  \theta_2\| \|\mathbf{C}^1_t-\mathbf{C}^2_t\| \|\tilde A^T (\tilde A\theta_2-\tilde y)\|\in \mathcal{O}(\kappa(t)^3),$$

showing that the second term converges faster and we can therefore neglect it.\\
$2.$ In batch subsampling with variance inflation the only difference is that the forward operator and data both depend on the particle, i.e. the mapped residuals satisfy
\begin{align*}
    \frac{\mathrm{d} \rho^{(j)}}{\mathrm{d} t}=-A_{\bsi(t;j)}\left[\widehat C_t+\alpha_{vi}C_{vi}\right]A_{\bsi(t;j)}^T\rho^{(j)}(t)
\end{align*}

with $\rho^{(j)}=A_{(\bsi;j)}\theta^{(j)}-\tilde y_{(\bsi;j)}$. The exponential convergence of each particle to the minimiser of the functional $\frac{1}{2}\|\tilde A_{(\bsi;j)}\theta^{(j)}-\tilde y_{(\bsi;j)}\|^2+\frac{\alpha}{2}\|\theta\|_{C_0}$ follows again from standard arguments with the Lyapunov function $\|\tilde A_{(\bsi;j)}^T\rho^{(j)}\|^2$. Hence, the convexity analysis does not change.\\
$3.$
If we do not use variance inflation the gradients $\tilde A_{(\bsi;j)}^\top \rho^{(j)}$ satisfy the following differential equation
\begin{align*}
\frac{\mathrm{d} \tilde A_{(\bsi;j)}^\top \rho^{(j)}(t)}{\mathrm{d}t} &= - \tilde A_{(\bsi;j)}^\top \tilde A_{(\bsi;j)}\widehat{C}_t \tilde A_{(\bsi;j)}^\top \rho^{(j)}(t).
\end{align*}
Again, by basic Lyapunov theory we obtain convergence at an algebraic speed, which however is enough to do the same analysis as above.
Similar to above we obtain\\ $h(t)=\lambda_{\min}(\widehat C_t)\left(\min_{i\in\{1,...,\Nsub\}}\lambda_{\min}(A_{\bsi}^TA_{\bsi})\right)$

 $4.$ Batch subsampling with diminishing variance inflation: Theorem \ref{thm:Expconv} gives us the exponential convergence to the respective solution. Then the convexity analysis is similar to the above calculations and we obtain $h(t)=\frac{\alpha_{vi}}{1+t}\lambda_{\min}(C_{vi})\left(\min_{i\in\{1,...,\Nsub\}}\lambda_{\min}(A_{\bsi}^TA_{\bsi})\right)$.

\end{proof}

\subsection{Subsampling without variance inflation}
\begin{lemma} \label{lemma:eig_ss_app}
For the regularised single-subsampling EKI flow, given by the solution of \eqref{EKI_subsampl_reg}, the following lower bound for the smallest eigenvalue $\lambda_{min}(t)$ of the empirical covariance $\hat{C}(t)$ holds.
\begin{align*}
    \lambda_{min}(t)\geq \left(2ct+\frac{1}{\lambda_{min}(0)}\right)^{-1}.
\end{align*}
\end{lemma}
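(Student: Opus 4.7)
The plan is to derive a closed-form evolution for $\hat{C}_t^{-1}$ (on the invariant subspace $\mathcal{E}$ where it is invertible), observe that this evolution is linear and driven by a uniformly bounded positive semidefinite term, and then invert a simple linear upper envelope to obtain the advertised lower bound on $\lambda_{\min}(t)$.

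First I would compute the evolution of the centred particles $e^{(j)}(t) := \theta^{(j)}(t) - \bar{\theta}(t)$. Since the preconditioner $\hat{C}_t$ in \eqref{EKI_subsampl_reg} is independent of $j$, subtracting the ensemble mean annihilates the data term and leaves the coupled linear equation
$$
\dot{e}^{(j)}(t) = -\hat{C}_t\, \tilde A_{\bsi(t)}^\top \tilde A_{\bsi(t)}\, e^{(j)}(t) \qquad (j \in J).
$$
Differentiating $\hat{C}_t = (\Nens-1)^{-1}\sum_j e^{(j)}(t)(e^{(j)}(t))^\top$ and using symmetry of $\tilde A_{\bsi(t)}^\top \tilde A_{\bsi(t)}$ then yields the Riccati-type identity
$$
\dot{\hat{C}}_t = -2\,\hat{C}_t\, \tilde A_{\bsi(t)}^\top \tilde A_{\bsi(t)}\, \hat{C}_t.
$$

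Next I would pass to the inverse. By the subspace property the centred dynamics live in $\mathcal{E}$ for all time, and the linear independence assumption on the initial centred ensemble ensures that $\hat{C}_0$ is invertible when restricted to $\mathcal{E}$. Using $\tfrac{\mathrm d}{\mathrm d t}\hat{C}_t^{-1} = -\hat{C}_t^{-1}\dot{\hat{C}}_t\hat{C}_t^{-1}$, the Riccati identity linearises to
$$
\frac{\mathrm d}{\mathrm d t}\hat{C}_t^{-1} = 2\,\tilde A_{\bsi(t)}^\top \tilde A_{\bsi(t)},
$$
which integrates explicitly to $\hat{C}_t^{-1} = \hat{C}_0^{-1} + 2\int_0^t \tilde A_{\bsi(s)}^\top \tilde A_{\bsi(s)}\,\mathrm d s$. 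Setting $c := \max_{i\in I}\lambda_{\max}(\tilde A_i^\top \tilde A_i)$, the integrand is bounded in the Loewner order by $c\,\mathrm{Id}$ uniformly in $s$, independently of the currently active subsample. Hence
$$
\hat{C}_t^{-1} \preceq \Bigl(\tfrac{1}{\lambda_{\min}(0)} + 2ct\Bigr)\mathrm{Id},
$$
and taking the largest eigenvalue of both sides and reciprocating gives the stated bound $\lambda_{\min}(t) \geq (2ct + 1/\lambda_{\min}(0))^{-1}$.

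The main obstacle is the technical handling of the subspace restriction: the covariance $\hat{C}_t$ is singular on $\mathbb{R}^{\Nens}$, so the inversion step must be interpreted on $\mathcal{E}$, e.g.\ by working in coordinates of a basis $E$ of $\mathcal{E}$ and replacing $\tilde A_i^\top \tilde A_i$ by $E^\top \tilde A_i^\top \tilde A_i E$ throughout. Once this bookkeeping is fixed, every other step is a short linear computation, and the subsampling plays no essential role beyond supplying the \emph{uniform} Loewner bound that makes the time integral controllable.
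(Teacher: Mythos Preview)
Your argument is correct and takes a genuinely different route from the paper. The paper derives the covariance dynamics in component form (via the scalars $E_{kj}=\langle \tilde A_{\bsi(t)} e^{(k)},\tilde A_{\bsi(t)} e^{(j)}\rangle$), then differentiates $\lambda_{\min}(t)$ directly using the unit eigenvector $v(t)$ to obtain the scalar Riccati inequality $\dot\lambda_{\min}\ge -2\|\tilde A_{\bsi(t)}\|^2\lambda_{\min}^2$, which it integrates. You instead recognise the matrix Riccati structure $\dot{\hat C}_t=-2\hat C_t\,\tilde A_{\bsi(t)}^\top\tilde A_{\bsi(t)}\,\hat C_t$, linearise it to $\tfrac{\mathrm d}{\mathrm dt}\hat C_t^{-1}=2\,\tilde A_{\bsi(t)}^\top\tilde A_{\bsi(t)}$, integrate exactly, and then bound in the Loewner order. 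Your approach is shorter, yields the exact closed form $\hat C_t^{-1}=\hat C_0^{-1}+2\int_0^t\tilde A_{\bsi(s)}^\top\tilde A_{\bsi(s)}\,\mathrm ds$ (useful in its own right), and sidesteps the smoothness issue for $\lambda_{\min}(t)$ at eigenvalue crossings that the paper's argument glosses over. The paper's eigenvalue--eigenvector computation, on the other hand, works entirely in the ambient space and never needs to invert $\hat C_t$; but since both approaches in the end rely on the same reduction to the invariant subspace $\mathcal E$ (the paper's ``w.l.o.g.\ the initial ensemble generates $\mathbb R^d$'' is exactly your coordinate change via $E$), this is not a real advantage. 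The constants agree since $\|\tilde A_i\|^2=\lambda_{\max}(\tilde A_i^\top\tilde A_i)$.
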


\begin{proof}

The proof follows the ideas used in \cite[Theorem~3.5]{Tong2020}.\\
W.l.o.g we assume that our initial ensemble is a generator of $\mathbb{R}^d$. Otherwise we consider the dynamics of the corresponding coordinates, which correspond to the minimisation problem \eqref{eqn:constrTEKI}.
The particles satisfy \eqref{EKI_subsampl}. Substituting the covariance matrix

$$\hat{C}_t=\frac{1}{\Nens}\sum_{j=1}^\Nens \left(\theta^{(j)}(t)-\bar{\theta}_t\right)\left(\theta^{(j)}(t)-\bar{\theta}_t\right)^T=\frac{1}{\Nens}\sum_{j=1}^\Nens e^{(j)}_t \left(e^{(j)}_t\right)^T.$$

into the dynamics of the particles gives

\begin{align*}
    \frac{\mathrm{d} \theta^{(j)}(t)}{\mathrm{d}t} &=-\frac{1}{\Nens}\sum_{k=1}^\Nens e^{(k)}_t \left(e^{(k)}_t\right)^T \tilde A_{i(t)} ^T\left(\tilde A_{i(t)} \theta^{(j)}(t)-\tilde y_{i(t)}\right) \\
    &=-\frac{1}{\Nens}\sum_{k=1}^\Nens e^{(k)}_t (\tilde A_{i(t)}  e^{(k)}_t)^T\left(\tilde A_{i(t)} \theta^{(j)}(t)-\tilde y_{i(t)}\right)\\
    &=-\frac{1}{\Nens}\sum_{k=1}^\Nens D_{kj}e^{(k)}_t,
\end{align*}

where we set $D_{kj}:=\langle  \tilde A_{i(t)}  e^{(k)}_t,\left(\tilde A_{i(t)} \theta^{(j)}(t)-\tilde y_{i(t)}\right)\rangle$.\\
Next we consider the dynamics of the weighted particles, i.e.

\begin{align*}
    \frac{\mathrm{d} \bar{\theta}(t)}{\mathrm{d}t}&=\frac{1}{\Nens}\sum_{j=1}^\Nens\frac{\mathrm{d} \theta^{(j)}(t)}{\mathrm{d}t}\\
    &=-\frac{1}{\Nens}\sum_{j=1}^\Nens \frac{1}{\Nens}\sum_{k=1}^\Nens D_{kj}e^{(k)}_t\\
    &=-\frac{1}{\Nens}\sum_{j=1}^\Nens \frac{1}{\Nens}\sum_{k=1}^\Nens \langle  \tilde A_{i(t)}  e^{(k)}_t,\left(\tilde A_{i(t)}  \theta^{(j)}(t)-y_{i(t)}\right)\rangle e^{(k)}_t\\
    &=-\frac{1}{\Nens}\sum_{k=1}^\Nens F_k e^{(k)}_t,
\end{align*}

where we set $F_k:=\langle \tilde A_{i(t)} e^{(k)}_t,\left(\tilde A_{i(t)}  \bar{\theta}-\tilde y_{i(t)}\right)\rangle$.\\
The difference of the scalars $D_{jk}$ and $F_k$ is given by
\begin{align*}
    &\langle  \tilde A_{i(t)}  e^{(k)}_t,\left(\tilde A_{i(t)}  \theta^{(j)}(t)-\tilde y_{i(t)}\right)\rangle-\langle  \tilde A_{i(t)}  e^{(k)}_t,\left(\tilde A_{i(t)}  \bar{\theta(t)}-\tilde y_{i(t)}\right)\rangle\\
    &=\langle  \tilde A_{i(t)}  e^{(k)}_t,\tilde A_{i(t)} e^{(j)}_t\rangle:=E_{kj}.
\end{align*}
Obviously we have $E_{kj}=E_{jk}$. With this we can quantify the dynamics of the centered particles, i.e. 
\begin{align*}
    \frac{\mathrm{d} e^{(j)}(t)}{\mathrm{d}t}&=\frac{\mathrm{d} \theta^{(j)}(t)-\bar{\theta}(t)}{\mathrm{d}t}\\
    &=-\frac{1}{\Nens}\sum_{k=1}^\Nens D_{kj}e^{(k)}_t+\frac{1}{\Nens}\sum_{k=1}^\Nens F_k e^{(k)}_t\\
    &=-\frac{1}{\Nens}\sum_{k=1}^\Nens (D_{kj}-F_k)e^{(k)}_t\\
    &=-\frac{1}{\Nens}\sum_{k=1}^\Nens E_{kj}e^{(k)}_t.
\end{align*}

Finally, we obtain for the dynamics of the empirical covariance $\hat{C}_t$
\begin{align*}
    \frac{\mathrm{d} \hat{C}(t)}{\mathrm{d}t}&=\frac{1}{\Nens}\sum_{j=1}^\Nens \frac{\mathrm{d}}{\mathrm{d}t} \left(e^{(j)}_t\left(e^{(j)}_t\right)^T\right)\notag\\
    &=\frac{1}{\Nens}\sum_{j=1}^\Nens\left(-\frac{1}{\Nens}\sum_{k=1}^\Nens E_{kj}e^{(k)}_t\right)\left(e^{(j)}_t\right)^T+\frac{1}{\Nens}\sum_{j=1}^\Nens e^{(j)}_t\left(-\frac{1}{\Nens}\sum_{k=1}^\Nens E_{kj}e^{(k)}_t\right)^T\notag\\
    &=-\frac{2}{\Nens^2}\sum_{j,k=1}^\Nens E_{kj} e^{(k)}_t \left(e^{(j)}_t\right)^T. 
\end{align*}

Now let $\lambda_{min}(t)$ be the smallest eigenvalue of $\hat{C}(t)$ with unit-norm eigenvector $v(t)$. Then we have

$$0=\frac{\mathrm{d}}{\mathrm{d}t}\|v(t)\|_X^2=2\langle v(t),\frac{\mathrm{d}}{\mathrm{d}t} v(t) \rangle.$$

The dynamics of the smallest eigenvalue are then given by:

\begin{align*}
    \frac{\mathrm{d} \lambda_{min}(t)}{\mathrm{d}t}&=\frac{\mathrm{d}}{\mathrm{d}t}\langle v(t),\hat{C}(t) v(t)\rangle\\
    &=\langle v(t),\frac{\mathrm{d}}{\mathrm{d}t}(\hat{C}(t)) v(t) \rangle+ \langle \frac{\mathrm{d}}{\mathrm{d}t} v(t), \hat{C}(t) v(t) \rangle\\
    &=\langle v(t),\frac{\mathrm{d}}{\mathrm{d}t}(\hat{C}(t)) v(t) \rangle+ \lambda_{min} \langle \frac{\mathrm{d}}{\mathrm{d}t} v(t),v(t) \rangle\\
    &=\langle v(t),\frac{\mathrm{d}}{\mathrm{d}t}(\hat{C}(t)) v(t) \rangle\\
    &=-\frac{2}{\Nens^2}\sum_{j,k=1}^\Nens E_{kj} \langle v(t), e^{(k)}_t \left(e^{(j)}_t\right)^T v \rangle.
\end{align*}

Note that the following holds
$$\langle v(t), e^{(k)}_t \left(e^{(j)}_t\right)^T v \rangle=\langle \left(e^{(k)}_t\right)^T v(t), \left(e^{(j)}_t\right)^T v \rangle=\langle e^{(k)}_t,v \rangle \langle e^{(j)}_t,v \rangle.$$

Substituting this and $E_{kj}$ into the latter equation, gives us

\begin{align*}
    \frac{\mathrm{d} \lambda_{min}(t)}{\mathrm{d}t}&= -\frac{2}{\Nens^2}\sum_{j,k=1}^\Nens \langle  \tilde A_{i(t)} e^{(k)}_t,\tilde A_{i(t)} e^{(j)}_t\rangle \langle e^{(k)}_t,v \rangle \langle e^{(j)}_t,v \rangle\\
    &=-\frac{2}{\Nens^2}\sum_{j,k=1}^\Nens \langle  \tilde A_{i(t)} e^{(k)}_t\langle e^{(k)}_t,v \rangle,\tilde A_{i(t)} e^{(j)}_t\langle e^{(j)}_t,v \rangle\rangle\\
    &=-2 \langle \tilde A_{i(t)}  \hat{C}(t) v(t), \tilde A_{i(t)}  \hat{C}(t) v(t)  \rangle\\
    &=-2 \| \tilde A_{i(t)} \hat{C}(t) v(t)\|^2\\
    &\geq -2 \| \tilde A_{i(t)} \|^2 \lambda_{min}^2(t).
\end{align*}

Setting $c=\max_{i\in \{1,...,\Nsub\}}\| A_{\bsi(t)} \|^2$, we obtain for this ODE the following lower bound for the solution

\begin{equation}
    \lambda_{min}(t)\geq \left(2ct+\frac{1}{\lambda_{min}(0)}\right)^{-1}.\notag
\end{equation}

\end{proof}

\bibliographystyle{siam}
\bibliography{main}
\end{document}